\newcommand{\R}{\mathbb{R}}
\definecolor{boris}{rgb}{0.,0.,0.}%{0.9,0.0,0.9}     
\definecolor{sofya}{rgb}{0.,0.,0.}%{0.0, 0.7, 0.3}
\def\sofya{\textcolor{sofya}}
\definecolor{chris}{rgb}{0.,0.,0.}%{0, 0., 1}
\def\chris{\textcolor{chris}}
\definecolor{pranav}{rgb}{0.,0.,0.}%{0.6, 0.1, 0.6}
\newtheorem{Theorem}{Theorem}[section]
\theoremstyle{definition}
\newtheorem{remark}[Theorem]{Remark}
\newtheorem{example}[Theorem]{Example}
\numberwithin{equation}{section}
\title[Higher order reversible integrators in deep learning]{Adaptive higher order reversible integrators for memory efficient deep learning}
\author{S.\ Maslovskaya} \address{Sofya Maslovskaya\\ Paderborn University, Department of Mathematics, Warburger Str. 100, 33098 Paderborn, Germany}
\email{sofyam@math.upb.de}
\author{S.\ Ober-Blöbaum} \address{Sina Ober-Blöbaum \\ Paderborn University, Department of Mathematics, Warburger Str. 100, 33098 Paderborn, Germany} \email{sinaober@math.upb.de}
\author{C.\ Offen} \address{Christian Offen\\ Paderborn University, Department of Mathematics, Warburger Str. 100, 33098 Paderborn, Germany} \email{christian.offen@upb.de}
\author{P.\ Singh} \address{Pranav Singh \\ Department of Mathematical Sciences, University of Bath, Bath, BA2 7AY, United Kingdom} \email{ps2106@bath.ac.uk}
\author{B.\ Wembe} \address{Boris Wembe \\ Paderborn University, Department of Mathematics, Warburger Str. 100, 33098 Paderborn, Germany} \email{wboris@math.upb.de}
\begin{document}

	\maketitle
	
	%\chris{Title: Do we want to change "methods" to "integrators"? }
	
	\begin{abstract}
		The depth of networks plays a crucial role in the effectiveness of deep learning. However, the memory requirement for backpropagation scales linearly with the number of layers, which leads to memory bottlenecks during training. Moreover, deep networks are often unable to handle time-series \textcolor{black}{data} appearing at irregular intervals. These issues can be resolved by considering continuous-depth networks based on the neural ODE framework in combination with reversible integration methods that allow for variable time-steps. Reversibility of the method ensures that the memory requirement for training is independent of network depth, while variable time-steps are required for assimilating time-series data on irregular intervals. However, at present, there are no known higher-order reversible methods with this property. High-order methods are especially important when a high level of accuracy in learning is required or when small time-steps are necessary due to large errors in time integration of neural ODEs, for instance in context of complex dynamical systems such as Kepler systems and molecular dynamics. The requirement of small time-steps when using a low-order method can significantly increase the computational cost of training as well as inference. In this work, we present an approach for constructing high-order reversible methods that allow adaptive time-stepping. Our numerical tests show the advantages in computational speed when applied to the task of learning dynamical systems.
	\end{abstract}
	
	%\chris{"Maybe we can shorten the abstract?"}
	
%\vspace{-5mm}
\section{Introduction}
Deep neural networks are widely used across various learning tasks \cite{Russakovsky2015,Esteva2017}, and their depth often plays a crucial role in the effectiveness of learning. These networks have also been shown to be particularly useful in the tasks of learning models of dynamical systems \cite{Chen2018,raissi2018m,Rudy2019,9003133,RUDY2019483,Liu2022,raissi2018multistep}. It was recently shown that the use of numerical methods for neural network architectures can provide impressive  results with theoretical guarantees \cite{Haber2017,chang2018,CELLEDONI2021,maslovskaya2024}.
In this work we use the theory of symmetric numerical methods for the construction of a new class of reversible neural networks. The new class allows memory efficient computations of gradients in training and reduced computational costs in learning {models of} dynamical systems,  \textcolor{black}{where the parameters typically need to be identified \textcolor{black}{to} high accuracy and the depth of the networks can be very large.} \textcolor{black}{Our network architecture constitutes an important contribution to ensure scalability of neural ODEs to high-dimensional dynamical systems that arise, for instance, as discretizations of systems governed by partial differential equations.} 

The high memory costs of computing the gradient of very deep neural networks using the backpropagation algorithms poses a significant bottleneck in their training, hindering their scalability and efficiency. 
To address this, a neural ODE approach combined with the adjoint method has been proposed for gradient computations \cite{Chen2018}, which avoids storing intermediate states during forward propagation, potentially making the cost of gradient computation independent of network depth. However, it was quickly realized  \cite{Gholaminejad2020,Zhuang2020} that using this approach with arbitrary discretization methods leads to incorrect gradients.

%Deep neural networks are used efficiently in various learning tasks \cite{Russakovsky2015,Esteva2017}. In particular, they were shown to be useful in the tasks of learning {models of} dynamical systems \cite{9003133}. However, memory costs become an obstacle for using very deep neural networks. It was recently shown that the use of numerical methods for neural network architectures can provide impressive  results with theoretical guarantees \cite{maslovskaya2024,CELLEDONI2021,chang2018,Haber2017}. In this work we use the theory of symmetric numerical methods for the construction of a new class of reversible neural networks. The new class allows memory efficient computations of gradients in training and reduced computational costs in learning {models of} dynamical systems. 
%\chris{for which gradients can be computed in a memory efficient way for training.}
%which are memory efficient and provide correct gradients.
%In addition, this new class shows advantages in learning dynamics systems.

%High memory costs were recognized as a major bottleneck in deep learning and in \cite{Chen2018} it was suggested to use a neural ODE approach together with the adjoint method for gradient computations, which avoids storing of intermediate states obtained during forward propagation in the network. It was quickly realized \cite{Zhuang2020} that this method  provides wrong gradients when used with an arbitrary discretization method. The solution of this problem is to use reversible integrators. %One of the ways to construct the reversible integrators is to use symplectic methods 

The solution of this problem is to use reversible integrators, which ensure accurate gradient computations by reconstructing intermediate states precisely during backward integration. However, 
symplectic reversible integrators \cite{chang2018}, which are a large class of well studied integrators, %, but such methods 
do not allow adaptivity in the step size and require a particular structure in the neural ODE. This makes them unsuitable for use in time-series applications where data appears at irregular intervals, when time-steps need to be decreased adaptively to achieve a prescribed accuracy in the learning of dynamical systems, \textcolor{black}{or when the identified model is used to predict continuous trajectories.}

\textcolor{black}{Adaptive time-stepping for numerical integrators for differential equations is a well-established field in numerical analysis \cite{Hairer2013,Deuflhard2002}. \textcolor{black}{In adaptive time-stepping, s}tep sizes for an integration step are selected such that an estimate for the local error is below a given error tolerance. In this way, computational cost in numerical integration can be saved when large step sizes are sufficient to obtain accurate results, while step sizes are automatically decreased when required.}
%\textcolor{black}{The reversible integrators, such as Verlet method \cite{Verlet1967} do not allow adaptive time-stepping in general and are only applicable to particular classes of differential equations.}
The only reversible methods \textcolor{black}{compatible with} variable time-step \textcolor{black}{selection without losing \textcolor{black}{the} reversibility property} are asynchronous leapfrog (ALF) \cite{mutze2016asynchronous}, \textcolor{black}{which is based on the classical Verlet method (also known as leapfrog method) \cite{Verlet1967}}, and the reversible Heun method \cite{Kidger2021a}.

\textcolor{black}{ALF has been used to construct neural network architectures known as MALI networks \cite{zhuang2021mali}.}
These methods are based on operating on an augmented space: a neural ODE in the original variable $z$ 
is extended to a larger space and replaced by a neural ODE in $(z,v)$. Both methods are known to be of order of accuracy $(2,1)$ in $(z,v)$ \cite{mutze2016asynchronous}, which makes them computationally costly in learning tasks that require high accuracy in the integration of the neural ODE, in particular, in learning of dynamical systems.
%In such tasks the learning accuracy is of high value.
To be able to reach high accuracy, the lower order methods are forced to use small step sizes and, as a result, \textcolor{black}{have} higher computational costs. This phenomenon was highlighted in the examples in \cite{Matsubara2021}.
\textcolor{black}{Furthermore, we show in \ref{sec:Kepler} that in parameter identification tasks there is a direct relation between the order of a numerical integrator and the order of accuracy of identified parameters.}
Therefore, there is a need for higher order reversible methods, which we address in this paper. %\ps{[you 
	%could mention here that the existing reversible integrators that allow the use of variable time-steps are based on operating on an augmented space: a neural ODE in the original variable $z$ 
	%is extended to a larger space and replaced by a neural ODE in $(z,v)$. ]}. Currently, the existing ones are asynchronous leapfrog \cite{zhuang2021mali} and reversible Heun \cite{Kidger2021a}, both are methods of order 2 \ps{[They are known to be $(2,1)$ in $(z,v)$, which is unusual of course. In fact, you show that ALF is order $2$ in both when two steps are used. We also should mention that we found out about Kidger, Foster, Lyon etc paper (ending with `neural SDEs' regarding reversible Heun having such a property), but did so near the submission of the paper. I am not sure if all these details need to be here, but they need to appear somewhere.]}. Another option is to use symplectic methods \cite{chang2018}, but they do not allow adaptivity in step size and require a particular structure in the neural ODE. 

\textcolor{black}{
	Various machine learning techniques \textcolor{black}{have} emerged in the past decades for approximating models of dynamical systems \cite{Ghadami2022}. %The \textcolor{black}{proposed} approaches 
	These include methods based on Gaussian processes \cite{Bouvrie2017,Raissi2018,Hamzi2021}, sparse regression on libraries of basis functions \cite{Brunton2016,Tran2017,Reinbold2021}, %The training data is typically represented by time series of observed trajectories and the learning task can be formulated differently depending on the underlying motivation, e.g., learning of an unknown vector field, a generating function of the vector field or the flow. 
	% One of the learning approaches is based on neural networks (NNs), where the 
	and recurrent neural networks (RNNs) %have the most suitable structure for the treatment of time series, as they mimic the flow of a dynamical system 
	\cite{Funahashi1993,BailerJones1998,Karniadakis2021}. In particular, the neural ODE approach \cite{Chen2018} identified an important connection between %permitted to combine 
	the RNN structure and the numerical methods available for integration of differential equations. This was generalized by universal differential equations in \cite{rackauckas2021} for different types of differential equations. Other generalizations include physics informed learning of dynamical systems \cite{Greydanus2019,LNN,Jin2020,Chen2020Symplectic} and operator approximation \cite{Chen1995,Lu2021,Lin2023,Boulle2024}. If the training data consists of time-series data that corresponds to a constant time-step, neural ODEs can be trained with a low order method and time-series can be predicted with high accuracy provided that the trained neural ODE is integrated with the same integrator that was used during training \cite{zhu2021inverse,DAVID2023112495,Offen2022,Offen2023}. However, in realistic examples, snapshots of trajectories with variable time-steps need to be processed \cite{raissi2018multistep,Rudy2019,Liu2022}. Moreover, a discretization-independent prediction of the system's evolution is often desired and learning of underlying differential equations requires high accuracy in the learned parameters, which requires high accuracy simulation.}

\textcolor{black}{We demonstrate that our approach can be applied to high-dimensional dynamical systems, including those arising from discretizations of  partial differential equations. This is a highly active research area. Other approaches in this context include model order reduction based techniques and operator inference (see the review \cite{Kramer2024} or e.g.~\cite{SHARMA2024116865,SHARMA2023116402,Blanchette2020,Blanchette2022}), or structure-preserving approaches for discrete field theories \cite{Qin2020,Offen2024,offen2024machinelearningdiscretefield}.
}

The main contribution of this paper is the development of a methodology to construct reversible neural networks based on higher order numerical methods. First, we prove that, in contrast to the analysis in \cite{mutze2016asynchronous},  ALF
%has a particular property to be 
\chris{is} of order 2 in both $(z,v)$ at even time-steps. Using this property and the theory of composition methods, we construct a class of reversible networks of any even order.
A particular architecture based on 4th order networks is compared to the already known ALF method
%applied to 
\chris{on} examples of learning dynamical systems. The comparison of the two methods with adaptive time-stepping shows that the proposed higher order method is computationally more efficient. % \chris{[What do you mean with "time-efficient"? Do you mean "computationally efficient" or that they train faster?]}. 
%On the other hand, comparing the methods with fixed step size we observe that the higher order method is able to reach a higher training accuracy in the learned dynamics and does not require the use of very small time-steps to reach it. %Notice that the reversible Heun's method was observed to have the property to be of order 2 in both $(z,v)$ at even time-steps, as reported in \cite{Kidger2021a}, which makes it possible to use our methodology to the construction of higher order methods based reversible Heun as well.
%\vspace{-4mm}
%
\section{Background}
%\vspace{-2mm}
%
\subsection{Neural ODE}
%\vspace{-2mm}
Assume that an unknown function $\mathcal{F} : X \rightarrow Y$ is approximated by a neural network based on training data $\{x_i,  y_i = \mathcal{F}(x_i) \}^{n}_{i = 1}$. %The neural network architecture is given \chris{as a} composition of parameterized nonlinear activation functions $\sigma$ and the propagation in a feature space $\R^n$ is defined by  \begin{equation} \label{eq:neural.ODE.0}
	%\begin{aligned}
	%&  z_{j+1}=\sigma (z_{j}, \theta_j), \quad j = 0, \dots, N-1, \\
	%& z_0 = z,
	%\end{aligned}
	%\end{equation}
	%where $z_j$ is the value of the feature variable at the $j$th layer with the total number of layers $N$ and $z = (x_1, \dots, x_n)$.
	%Such kind of networks can be constructed based on discretized differential equations. 
	\textcolor{black}{The neural ODE approach to the deep network design employs the idea of continuous-depth networks and their discretization by \textcolor{black}{a} numerical method. The continuous-depth network is defined as a flow of \textcolor{black}{a} neural ODE of the form} %In this case the differential equation is called neural ordinary differential equation (neural ODE).  Then, $z_0, \dots, z_N$ satisfying \eqref{eq:neural.ODE} approximate a flow of a neural ODE given by
	\begin{equation} \label{eq:ODE} 
		\dot z(t) = f(z(t), \theta(t)), \quad z(0) = (x_1, \dots, x_n),
	\end{equation}
	on \textcolor{black}{the} time interval $[0,T]$ for some vector field $f$. \textcolor{black}{Discretization of a neural ODE with a numerical method of step size  $h$ is defined as follows}
	\begin{equation} \label{eq:neural.ODE}
		\begin{aligned}
			&  z_{j+1}=\sigma_h (z_{j}, \theta_j), \quad j = 0, \dots, N-1, \\
			& z_0 =  z,%\textcolor{black}{(x_1, \dots, x_n)},
		\end{aligned}
	\end{equation}
	\textcolor{black}{where $z_j$ is the value of the feature variable at the $j$th layer with the total number of layers $N$ and $z = (x_1, \dots, x_n)$.  
		The step size can be chosen in an adaptive manner for each layer and it is not considered as an optimization parameter.} % !!!Here a paragraph on the adaptive time-stepping procedure!!!}
%\textcolor{black}{where $\theta_j$ parameters of the network.} %The properties of the numerical flow of such a differential equations are then inherited by the neural network. 
The learning problem is to find parameters $\{\theta_j\}_{j=0}^{N-1}$ % in the neural network \eqref{eq:neural.ODE} 
which lead to the best approximation of $\mathcal{F}$. % by the corresponding neural network. 
The parameters are usually found as a solution of the following optimization problem. %of an optimization problem of the following form. %To find the parameters $\{W_i, \beta_i\}$ for which the network \eqref{eq:neural.ODE} approximate $F$ in the best possible way, an optimization problem is formulated in the following form.
\begin{equation} \label{eq:DL:optimal.problem}
	\begin{aligned}
		& \min_{\{ \theta_j \}} J =  L(z_N, y) \\
		&  z_{j+1}=\sigma_{\textcolor{black}{h}} (z_{j}, \theta_j), \quad j = 0, \dots, N-1, \\
		& z_0 = z,
	\end{aligned}
\end{equation}
where %$\Phi(\cdot)$ is the output activation function which is defined on the space of features $x$ with its image in the space of labels $y$, 
$L(\cdot, \cdot)$ is a loss function which measures the distance between the output of the network and %the corresponding labels of the training data 
the training data \textcolor{black}{$y = (y_1, \dots, y_N)$}. 

Because of the connection between network \eqref{eq:neural.ODE} and the corresponding neural ODE \eqref{eq:ODE}, there exists a continuous counterpart of \eqref{eq:DL:optimal.problem} which makes \eqref{eq:DL:optimal.problem} an approximation of an %infinite dimensional optimization problem, more precisely, an 
optimal control problem of the form
\begin{equation}\label{eq:OCP}
	\begin{aligned}
		& \min_{\theta(t)} J =  L(z(T), y)  \\
		&  \dot  z(t) = f(z(t), \theta(t)), \quad t \in [0,T],\\
		&  z(0) = z.
	\end{aligned}
\end{equation}  			
%\vspace{-3mm}
%\textcolor{black}{SO: $W,\beta$ not defined, probably only use $\theta$} \sofya{Modified to $\theta$ everywhere}

%\subsection{Methods of gradient computations}
Solutions of %the optimization problem 
\eqref{eq:DL:optimal.problem} are usually found using methods based on gradient descent. Such methods require computation\chris{s} of the gradients of the loss \chris{function} with respect to all parameters \textcolor{black}{$\{\theta_j\}_{j=0}^{N-1}$}.% for $j = 1, \dotsm N-1$. 
%%
%
%There are different ways to compute gradients, but all of them are based on the backpropagation formula
%The gradients are calculated by backpropagation according to the chain rule
%
% \chris{By the chain rule }
%  $$ \frac{\partial}{\partial \theta_{j}} J = \nabla_z L(z_{N}, y)\chris{^\top} \frac{\partial z_{N} }{\partial z_{N-1}} \cdots \frac{\partial z_{j+2} }{\partial z_{j+1}} \frac{\partial z_{j+1} }{\partial \theta_{j}}.
% $$
% $$ \frac{\partial}{\partial \theta_{j}} J(\theta) = \nabla_z L(z_{N}(\theta))\chris{^\top} \frac{\partial z_{N}(\theta) }{\partial z_{N-1}(\theta)} \cdots \frac{\partial z_{j+1}(\theta) }{\partial z_{j}(\theta)} \frac{\partial z_{j}(\theta) }{\partial \theta_{j}}.
% $$
% \chris{To avoid computationally expensive multiplication of large matrices, the formula is evaluated from the left to the right (backpropagation). This requires that the intermediate values $z_j$ ($j=1,\ldots,N$) are available. Their size corresponds to the width of the layer and their number to the network's depth~$N$.} %While $z_j$ can be stored during the forward pass, in the following we discuss memory efficient methods for deep neural networks.}
%
%%\vspace{-2mm}
\subsection{Methods of gradient computations}
%There are two main approaches to calculate the gradients in deep learning, namely, the approach called discretize-then-optimise and the approach called optimise-then-discretize. The main difference is that in the first approach, the gradients are calculated with respect to the discrete flow \eqref{eq:DL:optimal.problem} and in the second approach the gradients
%\chris{relate to the continuous problem \eqref{eq:OCP}.}
%are obtained by discretising the continuous gradients.
%The first approach is the default approach in deep learning. Its problem is in the memory requirement, which is resolved in the second approach at the cost of restrictions on the network structure.
%\vspace{-8mm}
\textcolor{black}{\paragraph{Backpropagation} Let $J =  L(z_N, y)$. By the chain rule,} \textcolor{black}{the gradient is given by} 
\textcolor{black}{$$ \frac{\partial}{\partial \theta_{j}} J = \nabla_z L(z_{N}, y)^\top \frac{\partial z_{N} }{\partial z_{N-1}} \cdots \frac{\partial z_{j+2} }{\partial z_{j+1}} \frac{\partial z_{j+1} }{\partial \theta_{j}}.
$$
% $$ \frac{\partial}{\partial \theta_{j}} J(\theta) = \nabla_z L(z_{N}(\theta))\chris{^\top} \frac{\partial z_{N}(\theta) }{\partial z_{N-1}(\theta)} \cdots \frac{\partial z_{j+1}(\theta) }{\partial z_{j}(\theta)} \frac{\partial z_{j}(\theta) }{\partial \theta_{j}}.
% $$
To avoid computationally expensive multiplication of large matrices, the formula is evaluated from the left to the right (backpropagation). This requires that the intermediate values $z_j$ ($j=1,\ldots,N$) are available. Their size corresponds to the width of the layer and their number to the network's depth~$N$.} %\textcolor{black}{By construction, all the gradients are computed exactly. Notice that when the adaptive stepping in the discretiztaion is used, the memory costs are especially high because the computational graph is saved also for the computaton of the step size.\cite{}}%The calculation of the gradients in this approach is done in two steps. First step is to propagate \chris{forward} through the network $z_{j} \rightarrow z_{j+1}$ using the current values of parameters $\theta_j$ determined in the previous iteration of the optimization algorithm or by initialisation. All the values of $z_{j}$ and the computational graph are saved to be accessed next in the backpropagation step. In this step the gradient $\nabla_{\theta_j} J$ is calculated using the reverse mode of automatic differentiation (AD). By construction, all the gradients are computed exactly. \chris{T}herefore, the obtained gradients are exact for the discrete optimization problem. \textcolor{black}{Notice that when the adaptive stepping in the discretiztaion is used, the memory costs are especially high because the computational graph is saved}
%\vspace{-2mm}
\paragraph{Adjoint method}
This approach is based on the formula for the continuous gradients via adjoint variables $p(t)$. The adjoint variables are the solution of 
\begin{equation} \label{eq:adjiont}
\dot p = - \frac{\partial}{\partial z} f(z(t), \theta(t))^\top p, \qquad p(T) = \nabla L(z(T),y),
\end{equation}
on time interval $[0,T]$ and the differential of $J(\theta)$ with respect to $\theta(t)$ is calculated as follows
\begin{equation} \label{eq:adjiont.grad} D J(\theta(t)) = p^\top(t) \frac{\partial}{\partial \theta} f(z(t), \theta(t)).
\end{equation}
This method is particularly interesting when the available memory is limited. In this case, the forward propagation is done to obtain the value of $z(T)$ and $\nabla L(z(T),y)$, there is no need to save the intermediate values $z_{j}$ and the computational graph, because the backpropagation is realized by integrating the equations \chris{for} $(z(t), p(t))$ numerically backward in time. The correct discretization of the state-adjoint equations leads to the same expression for the gradient as the one obtained in the \textcolor{black}{backpropagation} approach. Still, the values of $z_j$ obtained by the numerical integration backward do not always coincide with the values obtained in the forward pass. This is why this approach usually leads to inexact gradients. This issue can be solved by considering reversible networks. % together with the correct discretisation of the adjoint equation. 
%\vspace{-3mm}
\textcolor{black}{\paragraph{Checkpointing} \textcolor{black}{An} alternative approach for memory reduction is checkpointing \cite{Gholaminejad2020}, which \textcolor{black}{stores a few} intermediate states for a regeneration of the computation graph. In case of adaptive time-stepping it was described in \cite{Zhuang2020} and further improved in \cite{Matsubara2021} for the class of Runge-Kutta methods. This is a highly efficient approach, when used in combination with higher order methods. In this case, a small number of checkpoints is required to get a high accuracy in learning. However, when the learning task is to learn a dynamical system from long trajectories of complex systems, then the number of checkpoints becomes large and can lead to memory leaks. This is why it is important to have an alternative approach based on reversible networks with the memory costs independent from the given task.}
%\vspace{-2mm}
\subsection{Reversible neural network}
%\vspace{-2mm}
A reversible network is a network with the property that there exists an explicit formula for backward propagation $z_{j+1} \mapsto z_j$ that exactly inverts a forward pass $z_{j} \mapsto z_{j+1}$\textcolor{black}{, i.e., there exists a map $\tilde \sigma(\cdot)$, such that $z_{j} = \tilde \sigma(z_{j+1})$ and $z_{j+1} = \sigma_h (\tilde \sigma(z_{j+1}), \theta_j)$}.  It requires that the time-steps $t_0,\ldots, t_N$ have been stored when the forward propagation was computed but it does not require storage of the (potentially very high-dimensional) intermediate values $z_j$. The discretizations of neural ODE \eqref{eq:neural.ODE}, which admit this property are called reversible methods. As for now, there are only two known reversible methods allowing for an adaptive choice of the step size, namely, asynchronous leapfrog \cite{zhuang2021mali} and reversible Heun \cite{Kidger2021a}.  

This notion of reversibility \chris{for neural networks} needs to be contrasted \chris{with} the notion of time-reversibility or symmetry \chris{for numerical integrators}.
\chris{In the context of neural networks, reversibility means that there exists an explicit, efficient formula to invert the forward pass. In numerical integration theory, a time-reversible or symmetric numerical integrator is a formula to advance the solution of an ordinary differential equation by time $h$ such that its inverse is obtained by substituting $h$ by $-h$} \cite[II.3]{hairer2006}. \textcolor{black}{In case of the dynamical system $f(z(t), \theta(t))$ from \eqref{eq:neural.ODE}, if the discretization by a numerical method $z_{j+1} = \sigma_h(z_j, \theta(t_j))$ is symmetric, then it implies $z_j = \sigma_{-h}(z_{j+1}, \theta(t_{j+1}))$, %In the neural ODE context $\theta(t_{j+1})$ corresponds to the optimization parameter $\theta_{j+1}$. As a result, using a symmetric method in \eqref{eq:neural.ODE}, we obtain
or equivalently, $z_{j} = \sigma_{-h} (z_{j+1}, \theta_{j+1})$. Therefore, we can set $\tilde \sigma(\cdot) = \sigma_{-h} (\cdot, \theta_{j+1})$, which implies that the method is reversible.} The symmetry \chris{of integrators} is beneficial in the context of the article as inverses of the methods required for backpropagation take simple forms and efficient classical techniques to construct higher order methods \cite[II.4]{hairer2006} apply.
% which makes \eqref{eq:ODE} a non autonomous differential equations. Therefore, the numerical methods to discretize $f_\theta(z(t),t)$ can be stated for a general non autonomous system. 
%\vspace{-3mm}
\paragraph{Asynchronous Leapfrog (ALF) method}  %Consider \eqref{eq:ODE}.  
As the optimization parameter $\theta(t)$ in the dynamics $f(z(t),\theta(t))$ depends on time, it can be seen as a part of $f$ and written simply $f_\theta(z(t),t)$. The ALF method requires the augmentation of the pair of state and time $(z,t)$ with the velocity $v$ which approximates $f_\theta(z(t),t)$. \textcolor{black}{We denote a step forward of \textcolor{black}{the} ALF method with the step size $h$ by $\Psi^{ALF}_h$.} Given a triple $(z_j,v_j,t_j)$ and a step size $h$, the algorithm generates in the forward pass the next values  $(z_{j+1},v_{j+1},t_{j+1})$ as follows
\begin{equation} \label{eq:ALF.forward}
\begin{pmatrix} z_{j+1} \\[0.3cm] v_{j+1} \end{pmatrix} =\textcolor{black}{\Psi^{ALF}_h(z_{j},v_{j}, t_j) = } \begin{pmatrix} z_{j} + h f_\theta(z_{j} + \frac{h}{2}v_{j}, t_j + \frac{h}{2}) \\[0.3cm] 2f_\theta(z_{j} + \frac{h}{2}v_{j}, t_j + \frac{h}{2}) - v_{j} \end{pmatrix}, \quad t_{j+1}= t_j + h.   
\end{equation}
The step backward calculates $(z_j,v_j,t_j)$ from $(z_{j+1},v_{j+1},t_{j+1})$ as follows
\begin{equation} \label{eq:ALF.backward}
\begin{pmatrix} z_{j} \\[0.3cm] v_{j} \end{pmatrix} =\textcolor{black}{\Psi^{ALF}_{-h}(z_{j+1},v_{j+1}, t_{j+1}) }%\begin{pmatrix} z_{j+1} - h f(z_{j+1} - \frac{h}{2}v_{j+1}, t_{j+1} - \frac{h}{2}) \\[0.3cm] 2f(z_{j+1} - \frac{h}{2}v_{j+1}, t_{j+1} - \frac{h}{2}) - v_{j+1} \end{pmatrix}
,\quad t_j = t_{j+1} - h.
\end{equation}
%To see that the backward method leads exactly to the values calculated in the forward pass, i.e., the composition of an iteration forward and an iteration backward is an identity map, it is helpful to write it as follows
%$$ \begin{pmatrix} z^{k+1} \\[0.3cm] v^{k+1} \end{pmatrix} = \begin{pmatrix} z^{k} + h f(z^{k} + \frac{h}{2}v^{k}, t^k + \frac{h}{2}) \\[0.3cm] 2f(z^{k} + \frac{h}{2}v^{k}, t^k + \frac{h}{2}) - v^{k} \end{pmatrix}, \qquad 
%\begin{pmatrix} z^{k} \\[0.3cm] v^{k} \end{pmatrix} = \begin{pmatrix} z^{k+1} - h f(z^{k+1} - \frac{h}{2}v^{k+1}, t^{k+1} - \frac{h}{2}) \\[0.3cm] 2f(z^{k+1} - \frac{h}{2}v^{k+1}, t^{k+1} - \frac{h}{2}) - v^{k+1} \end{pmatrix}.$$
%or it can also be represented as follows
% $$ z^{k+1} = z^k + h v^k + \frac{h^2}{2} \frac{v_1 - v^k}{\tau} \qquad v^{k+1} = v^k + h \frac{v_1 - v^k}{\tau},$$
% where $\tau = h/2$. Let us denote the ALF step as a function of $h$ by $\mathcal{A}_h$. Then $\mathcal{A}_h$ can be seen as a product of 3 following maps. First $B_h(t, z, v) = (t+h, z+ hv, v)$, second $C(t,z,v) = (t,z,2f(t,z) - v)$. Then we have
% $$ {A}_h = B_{h/2} \circ C \circ  B_{h/2}.$$
%Notice also the similarity with the standard explicit midpoint rule defined by
% $$ z^{k+1} = z^k + hf(z^k + \frac{h}{2} f(z^k, t^k), t^k + \frac{h}{2}).$$
If the method is initialized at $(z_0,f(z_0,t_0),t_0)$, then ALF is a second order method in $z$ and first order method in $v$, as it was shown in \cite{zhuang2021mali}. The order of accuracy is the order in step size $h$ of the error of the numerical flow \textcolor{black}{compared} with the exact flow of the ODE \cite{hairer2006}.
%The ODE in this case is $\dot z(t) = f_\theta(z(t),t)$ and its flow is extended to $(z(t), v(t) = f_\theta(z(t),t))$.
\textcolor{black}{Notice that ALF is symmetric by definition.}%Notice that ALF has the property that the step backward\chris{s} coincides with the step forward applied to $-h$ instead of $h$, which makes this method symmetric.

% \paragraph{Reversible Heun method}~
The reversible Heun method is another reversible method \chris{based on state-space augmentation}
%follows the similar idea of state augmentation and
and was introduced in \cite{Kidger2021a}. %The state is augmented to $(v,z,\hat z, t)$ with initialization at $(f_\theta(z_0, t_0),z_0, z_0, t_0)$. %The step forward is defined as
% $$ t_{j+1} = t_j + h, \ \  \hat{z}_{k+1} = 2z_j - \hat{z}_{k}  + h v_j,\ \  v_{j+1} = f(t_{j+1}, \hat{z}_{k+1}), \ \  z_{j+1} = z_j + \frac{h}{2}(v_{j} + v_{j+1})
%$$
%and the step backward is defined by
% $$ t_{j} = t_{j+1} - h, \ \  \hat{z}_{k} = 2z_{j+1} - \hat{z}_{k+1}  - h v_{j+1},\ \  v_{j} = f(t^{k}, \hat{z}^{k}), \ \  z_{j} = z_{j+1} - \frac{h}{2}(v_{j} + v_{j+1})
%$$
% Standard Heun's method is defined by
% $$ 
% \begin{aligned}
% & \hat{z}^{k+1} =  z^k + hf(z^k,t^k), \\
% &  {z}^{k+1} = z^k + \frac{h}{2}\left( f(z^k,t^k) + f(\hat{z}^{k+1}, t^k + h) \right).
% \end{aligned}
% $$
The method was shown to be also of second order in $z$ and first order in $v$. In addition, it is a symmetric method. In the following part of the paper we will concentrate on the construction of higher order methods based on ALF, but the same can be also applied to the reversible Heun method.
%
%\vspace{-2mm}
\section{New reversible architectures}
%\vspace{-2mm}
%\subsection{Higher order reversible methods}
%
A general approach in numerical analysis to construct higher order symmetric methods is by composition \cite{hairer2006,Hairer2013,BlanesCasasMurua2024}. In this case one can start with a lower order numerical method and construct a new method by composition of the lower order method with a particular choice of step sizes. This construction leads to a method of higher order of accuracy.  %However, there is no theory for the construction of a higher order methods based on the composition of mixed-order methods, such as ALF, which is of order $(2,1)$ in variables $(z,v)$. %Even though the order of consistency of ALF in $v$ is only 1 \cite{zhuang2021mali}, testing ALF on a numerical example (see below), we observe second order convergence in $(z,v)$ (Figure~\ref{fig:ALF}). We will show that a method consisting of a composition of two steps of ALF has order of consistency 2 in $(z,v)$, which explains the convergence behaviour. 
%
%\chris{Testing ALF on a numerical example (see below), we observe second order convergence in $(z,v)$ (Figure~\ref{fig:ALF}). This is surprising as the order of consistency of ALF in $v$ is only 1 \cite{zhuang2021mali}. Indeed, we will show that a method consisting of a composition of two steps of ALF has order of consistency 2 in $(z,v)$, which explains the convergence behaviour.}
%By considering the evolution of the numerical flow of ALF in time, one easily notices that the method is of second order in both $(z,v)$, as shown in Figure~\ref{fig:ALF} \chris{for a numerical example}. The explanation is that the method gains order $2$ in every second step. So that the values obtained at $(z_{2k},v_{2k})$ are of the second order accuracy with respect to the exact flow. Let us consider a simple example and the order of the global error of the numerical trajectories obtained by ALF.
%\textcolor{black}{SO: Not totally clear what is meant here.} 
%\sofya{There is no theory for composition of mixed order methods.}
%Solving the equation numerically using ALF method leads 

\textcolor{black}{Numerical experiments, see Appendix~\ref{sec:ALF2}, show that ALF} \textcolor{black}{is of} second order in the error with respect to a high accuracy solver in both $z$ and $v$. This is surprising as the order of consistency of ALF in $v$ is only 1 \cite{zhuang2021mali}. Indeed, as we show below, a method consisting of a composition of two steps of ALF has order of consistency 2 in $(z,v)$, which explains the convergence behaviour.
%Since there is no theory for composition of mixed order methods, 
This observation \textcolor{black}{is required to apply theory for} composition methods \cite{Hairer2013,BlanesCasasMurua2024,Yoshida1990} to (two steps of) ALF.
%The result on the order of ALF is summarized in the following theorem. 
\begin{Theorem} \label{th:ALF2}
Composition of two steps of ALF methods, i.e. \textcolor{black}{$\Psi^{ALF}_{h/2}\circ \Psi^{ALF}_{h/2}$}, applied to $\dot z = f(z,t)$ provides second order accurate approximations of position $z$ and velocity $v=\dot z$.
%is a second order method in both position $z$ and velocity $v$.    
\end{Theorem}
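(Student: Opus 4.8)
The plan is to prove local third-order accuracy in both components for the composed macro-step (advancing by $h$ via two sub-steps of size $h/2$), from which global second-order accuracy follows by the standard one-step convergence argument. Throughout I will use the consistent initialization $v_0 = f(z_0,t_0) = \dot z_0$ under which ALF is employed. The first move is a purely algebraic simplification of the composition. Writing the two midpoint slope evaluations of the sub-steps as $\bar v_1 = f(z_0 + \tfrac{h}{4} v_0,\, t_0 + \tfrac{h}{4})$ and $\bar v_2 = f(z_1 + \tfrac{h}{4} v_1,\, t_0 + \tfrac{3h}{4})$, and exploiting the telescoping form of the ALF updates $z_{j+1} = z_j + \tfrac{h}{2}\bar v_{j+1}$ and $v_{j+1} = 2\bar v_{j+1} - v_j$, the composed map collapses to
\[ z_2 = z_0 + \tfrac{h}{2}(\bar v_1 + \bar v_2), \qquad v_2 = v_0 + 2(\bar v_2 - \bar v_1). \]
This exposes $z_2$ as a two-node quadrature and $v_2$ as a scaled difference of two staggered slopes, which is what makes the cancellation visible.

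Next I would Taylor-expand $\bar v_1$ and $\bar v_2$ about $(z_0,t_0)$ to order $h^2$, writing everything in terms of $f$, $\dot f = f_z f + f_t$, $\ddot f$, and the combination $f_z \dot f$. The expansion of $\bar v_1$ is immediate. For $\bar v_2$ one must first expand the intermediate state, $z_1 = z_0 + \tfrac{h}{2} f + O(h^2)$ and, crucially, $v_1 = 2\bar v_1 - v_0 = f + \tfrac{h}{2}\dot f + O(h^2)$, which already differs from $f(z_1,t_1)$ at order $h^2$: the intermediate point lies off the manifold $\{v = f(z,t)\}$. Substituting these into the argument of $\bar v_2$ and collecting terms gives an expansion in which the spatial displacement contributes a term $\tfrac{h^2}{4} f_z \dot f$.

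The crux is the comparison with the exact expansions $z(t_0+h) = z_0 + hf + \tfrac{h^2}{2}\dot f + \tfrac{h^3}{6}\ddot f + O(h^4)$ and $\dot z(t_0+h) = f + h\dot f + \tfrac{h^2}{2}\ddot f + O(h^3)$. In forming $\bar v_2 - \bar v_1$, the term $\tfrac{h^2}{4} f_z \dot f$ is met by an equal and opposite $-\tfrac{h^2}{4} f_z \dot f$, and these cancel, leaving $v_2 = f + h\dot f + \tfrac{h^2}{2}\ddot f + O(h^3)$, i.e. $v_2 - \dot z(t_0+h) = O(h^3)$. This is exactly the single-step velocity defect of order $-\tfrac{h^2}{4}(\ddot f + f_z\dot f)$ being annihilated across the two staggered sub-steps, and it is the reason one ALF step is only first order in $v$ while two are second order. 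The analogous, and more routine, collection of terms for $z_2$ yields $z_2 - z(t_0+h) = O(h^3)$.

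The main obstacle is organizing the expansion cleanly through the off-manifold intermediate state: because $v_1 \neq f(z_1,t_1)$ at order $h^2$, the second sub-step is not \emph{consistently} initialized, so one cannot reuse the single-step analysis verbatim; instead the $O(h^2)$ velocity defect of $v_1$ must be carried into $\bar v_2$ and shown to be precisely the quantity that cancels the leading velocity error. This is delicate coefficient and sign bookkeeping that is easy to get wrong. I note that the two-step map is symmetric, $(\Psi^{ALF}_{h/2}\circ\Psi^{ALF}_{h/2})^{-1} = \Psi^{ALF}_{-h/2}\circ\Psi^{ALF}_{-h/2}$, which is morally why an even (hence $\geq 2$) order emerges; but since ALF is consistent with the augmented flow only on the constraint manifold $\{v=f(z,t)\}$ and the intermediate state leaves it, I would rely on the explicit expansion above rather than a bare symmetry argument. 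Finally, local error $O(h^3)$ in both $z$ and $v$ upgrades to global second-order accuracy by the standard stability/convergence theorem for one-step methods applied over the macro-steps.
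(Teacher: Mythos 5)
Your proposal is correct, and at its core it is the same proof as the paper's: Taylor-expand the exact flow and the composed map $\Psi^{ALF}_{h/2}\circ \Psi^{ALF}_{h/2}$ about $(z_0, v_0=f(z_0,t_0))$, check agreement of all terms through $O(h^2)$ in both components, and pass from local error $O(h^3)$ to global order $2$ by the standard one-step convergence argument. The difference is purely in execution, but it is a real improvement: the paper writes the composed step as one nested closed-form expression and delegates the expansion to Maple, asserting that the terms match, whereas your telescoped form $z_2 = z_0 + \frac{h}{2}(\bar{v}_1+\bar{v}_2)$, $v_2 = v_0 + 2(\bar{v}_2-\bar{v}_1)$ reduces the whole verification to expanding two slope evaluations by hand, and it pinpoints the mechanism of the order gain: the displacement-induced $\frac{h^2}{4} f_z \dot f$ in $\bar{v}_2$ cancels against the $-\frac{h^2}{4}f_z\dot f$ coming from the difference of the curvature terms, leaving $\bar{v}_2-\bar{v}_1 = \frac{h}{2}\dot f + \frac{h^2}{4}\ddot f + O(h^3)$ and hence $v_2 = f + h\dot f + \frac{h^2}{2}\ddot f + O(h^3)$ (I verified your coefficients; they are right, including the single-step velocity defect $-\frac{h^2}{4}(\ddot f + f_z\dot f)$). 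Your caution about not invoking the symmetric-implies-even-order theorem as a shortcut is also well placed: the intermediate state leaves the manifold $\{v=f(z,t)\}$ at order $h^2$, which is exactly why a single ALF step is only first order in $v$ and why an explicit expansion (rather than a bare symmetry argument) is needed.
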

The proof of the theorem is based on comparing the term\chris{s} in the Taylor series of the exact flow of a differential equation and the numerical flow obtained by composition of two steps of \chris{the} ALF method. We refer to the Appendix~\ref{sec:appendix.proof} for the computations. 
%
%\chris{[Do we need to say anything about stability?]
The composition of two steps of the ALF method, each with time-step $\frac{h}{2}$, will be \textcolor{black}{called ALF2 and denoted by $\Psi^{ALF2}_h$}. Now we are in a classical situation, with ALF2 a one step reversible method of even order and we can apply the composition methods to construct higher order methods. In this work we consider the Yoshida approach \cite{Yoshida1990}. Yoshida composition permits to  construct methods of a higher accuracy by composing numerical methods of order $2k$ for some integer $k$. It is defined by a symmetric composition of the same method $\Psi^{\mathrm{2k}}$ ($2k$ stands for the order) with different step sizes
$$ \Psi^{\mathrm{Y}}_h = \Psi^{\mathrm{2k}}_{ah}  \circ \Psi^{\mathrm{2k}}_{bh} \circ \Psi^{\mathrm{2k}}_{ah} $$
with time-steps defined by
$$ a = \frac{1}{2 - 2^{\frac{1}{2k+1}}} , \qquad b = 1-2a. $$
\begin{Theorem}[\cite{Yoshida1990}]
	Yoshida composition of a reversible method $\Psi^{\mathrm{2k}}$ of order $2k$ has order $2k+2$ and is reversible. 
\end{Theorem}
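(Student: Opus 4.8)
The plan is to establish the two claims—reversibility and order $2k+2$—separately, leaning on the symmetry of the base method $\Psi^{2k}$ throughout. For reversibility, I would first recall from the discussion above that it suffices to show $\Psi^{\mathrm{Y}}$ is symmetric, since a symmetric one-step map is reversible with inverse branch $\tilde\sigma = \sigma_{-h}$. Writing $A = \Psi^{2k}_{ah}$ and $B = \Psi^{2k}_{bh}$, so that $\Psi^{\mathrm{Y}}_h = A\circ B\circ A$, I would invoke the adjoint rule $(\Phi\circ\Psi)^\ast = \Psi^\ast\circ\Phi^\ast$ (with $\Phi^\ast_h := \Phi_{-h}^{-1}$) together with the symmetry of each factor, $A^\ast = A$ and $B^\ast = B$. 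A short computation then gives
\[
 (\Psi^{\mathrm{Y}}_h)^\ast = (A\circ B\circ A)^\ast = A^\ast\circ B^\ast\circ A^\ast = A\circ B\circ A = \Psi^{\mathrm{Y}}_h,
\]
so the palindromic composition is again symmetric, hence reversible. Note that this part uses only the arrangement $a,b,a$ and not the specific values of $a,b$.

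For the order statement I would pass to backward error analysis. A symmetric method of order $2k$ admits a modified-equation representation
\[
 \Psi^{2k}_h = \exp\!\Big(hD + h^{2k+1}E_{2k+1} + h^{2k+3}E_{2k+3} + \cdots\Big),
\]
where $D$ is the Lie derivative of the exact flow of $\dot z = f(z,t)$ and the correction operators $E_{2k+1}, E_{2k+3},\dots$ are step-independent; crucially, only \emph{odd} powers of $h$ survive beyond $hD$, which is precisely the consequence of symmetry. Substituting step sizes $ah$ and $bh$ rescales the $j$-th correction by $a^{j}$ resp.\ $b^{j}$, and combining the three exponentials with the Baker--Campbell--Hausdorff formula yields
\[
 \Psi^{\mathrm{Y}}_h = \exp\!\Big((2a+b)\,hD + (2a^{2k+1}+b^{2k+1})\,h^{2k+1}E_{2k+1} + O(h^{2k+2})\Big).
\]
The consistency condition $2a+b=1$ (valid since $b=1-2a$) recovers $hD$ at leading order, and the order is raised exactly when the $h^{2k+1}$ coefficient vanishes.

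The heart of the argument is then the scalar identity $2a^{2k+1}+b^{2k+1}=0$. Setting $s = 2^{1/(2k+1)}$, so that $s^{2k+1}=2$, the definition of $a$ gives $b = 1-2a = -s\,a$, whence
\[
 2a^{2k+1} + b^{2k+1} = a^{2k+1}\big(2 - s^{2k+1}\big) = a^{2k+1}(2-2) = 0,
\]
the odd exponent being what lets the sign of $b=-sa$ survive. This eliminates the $h^{2k+1}$ term. Finally, because $\Psi^{\mathrm{Y}}$ is symmetric (first part), its expansion contains only odd powers of $h$ beyond $hD$, so the next possibly nonzero term is of order $h^{2k+3}$, \emph{not} $h^{2k+2}$; hence $\Psi^{\mathrm{Y}}$ has order $2k+2$ exactly.

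The genuinely delicate points are the two classical facts I would invoke rather than reprove: the existence of the modified-equation expansion and, in particular, that symmetry forces the corrections to appear only at odd powers of $h$, and the additivity of the leading correction under BCH composition (both standard, e.g.\ \cite[II.3]{hairer2006}). Granting these, everything reduces to the elementary identity verified above. A self-contained alternative avoiding BCH would be to expand the three one-step maps directly in Taylor series, match against the exact flow order by order, and observe that—after the consistency condition $2a+b=1$—the only new order condition is $2a^{2k+1}+b^{2k+1}=0$, with the symmetry argument again upgrading order $2k+1$ to order $2k+2$ for free.
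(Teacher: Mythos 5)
Your proposal is correct: the palindromic-symmetry argument for reversibility, plus the backward-error/BCH computation reducing everything to the scalar identity $2a^{2k+1}+b^{2k+1}=0$ (via $b=-2^{1/(2k+1)}a$) and the parity argument upgrading $2k+1$ to $2k+2$, is exactly the classical proof. The paper offers no proof of its own --- the theorem is quoted from Yoshida (1990), whose argument is precisely this one --- and your implicit strengthening of ``reversible'' to ``symmetric'' for the base method is what the paper intends, since the composition theory is applied there to the symmetric method ALF2.
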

Yoshida is not the only composition method that can be used, another possible approach is  Suzuki composition \cite{Suzuki1991}. Several approaches are reviewed in \cite{hairer2006,blanes2024splittingmethodsdifferentialequations}.
\sofya{\begin{remark}
		%Note that tThe approach using Yoshida composition might require checkpoints in case of certain neural ODEes, e.g., when the learning task is to learn a heat equation. This is because the Yoshida composition forces the use of negative time-steps, which can be a problem in dissipative cases, where it can lead to instability.
		The approach based on Yoshida composition might require checkpoints in case of certain neural ODEs, e.g., when the learning task is to learn a dispersive partial differential equation such as heat equation. This is because the Yoshida composition forces the use of negative time-steps, which can be a problem in dissipative cases, where it can lead to instability. %In this case the instability in the propagation backward can lead to the exponential growth of the rounding errors, and therefore, errors in the computation of $z_N, \dots, z_1$.
\end{remark}}
%It is defined as follows
%	$$ \Phi^{\mathrm{S}}_h =  \Psi^{\mathrm{2k}}_{ah}  \circ  \Psi^{\mathrm{2k}}_{ah}  \circ   \Psi^{\mathrm{2k}}_{bh} \circ  \Psi^{\mathrm{2k}}_{ah} \circ  \Psi^{\mathrm{2k}}_{ah} $$
%	with time-steps defined by
%	$$a = \frac{1}{4 - 4^{\frac{1}{2k+1}}}, \qquad b = 1-4a. $$
%\begin{Theorem}[\cite{}]
%    Suzuki  composition of a reversible method $\Psi^{\mathrm{2k}}$ of order $2k$ has order $2k+2$ and is reversible.
%\end{Theorem}
%Suzuki composition has a higher computation costs than Yoshida due to a higher number of composition elements, but it also has a lower numerical error. 
%
%Applying now Yoshida composition to ALF2 in Example~\ref{ex:ALF2}, we obtain an order four method. If we apply the Yoshida composition again, then we obtain a 6th order. This can be seen in Figure~\ref{fig:Y4Y6}.
%
%
%\begin{figure} 
%%\vspace{-2mm}
%\centering
%\includegraphics[width=0.4\textwidth]{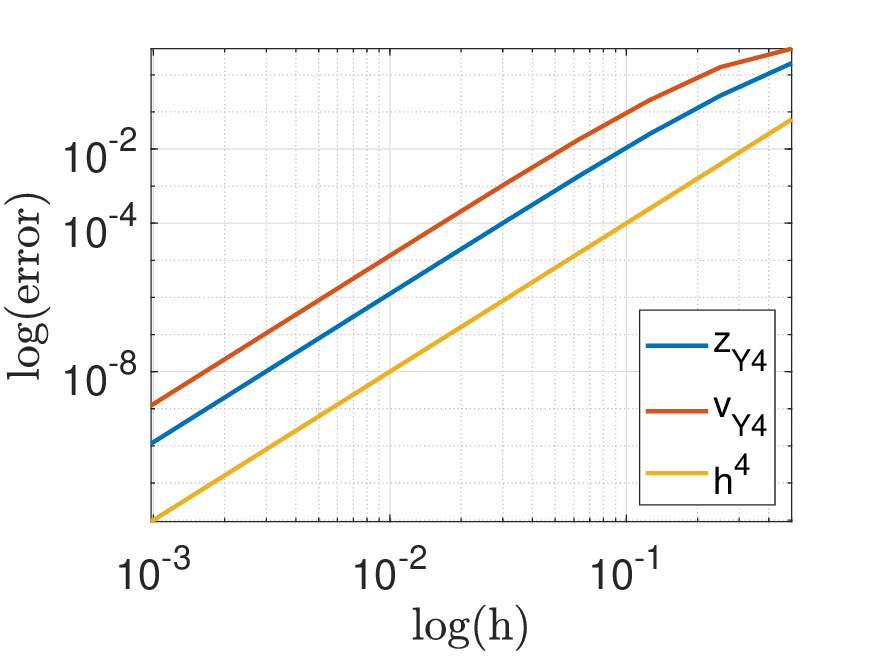} \ \ \ 
%\includegraphics[width=0.4\textwidth]{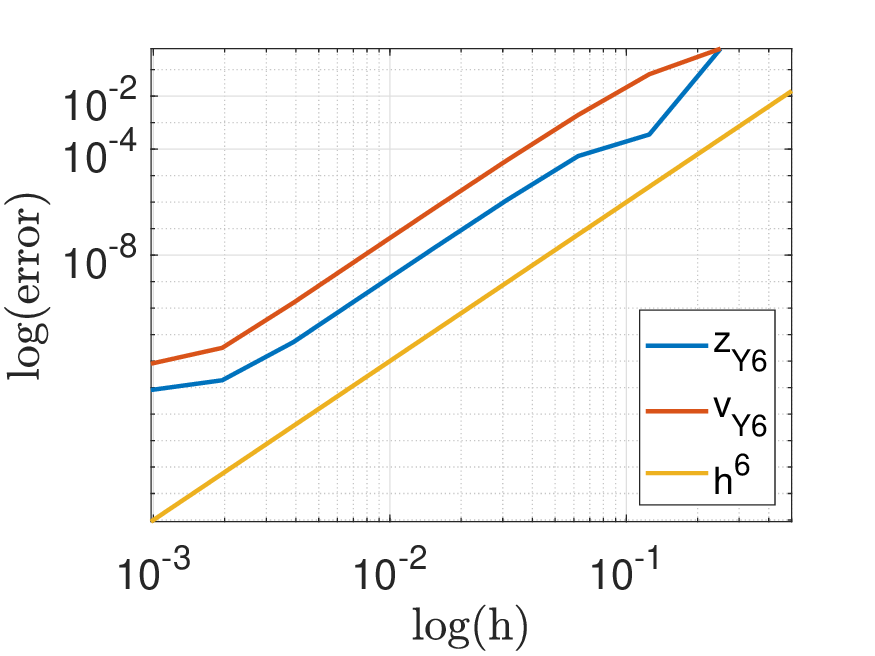}
%\fbox{\rule[-.5cm]{0cm}{4cm} \rule[-.5cm]{4cm}{0cm}}
%\caption{Order of Yoshida composition of ALF2 is on the left and the resulting method is composed again using the Yoshida approach leading to a 6th order method, which is displayed on the right.}
%\label{fig:Y4Y6}
%%\vspace{-2mm}
%\end{figure}
%
%
%In the following, we will denote by $\Psi^{Y,2k}_h$ the higher order methods obtained by Yoshida composition of ALF2, where $2k$ is the order of the method.\textcolor{black}{SO: I think this sentence is doubled}
%
%Yoshida composition permits to construct higher order reversible methods. 
In the following, we will denote by $\Psi^{Y,2k}_h$ the higher order methods obtained by Yoshida composition of ALF2, where $2k$ is the order of the method. The constructed higher order reversible method can be used for the construction of a reversible network. In this case, the step forward and the step backward are defined recursively based on the steps forward and backward of a lower order method $\Psi^{Y,2k-2}_h$. The starting method of order $2$ is the ALF2 method, i.e.\ $\Psi^{Y,2}_h=\Psi^{ALF2}_h$ by abuse of notation.

\begin{algorithm}
	\setstretch{1.4}
	\caption{Step forward of $2k$-th order Yoshida}\label{alg:step.forward}
	\begin{algorithmic}
		\State 1. Input: $(z_j,v_j,t_j,h_j)$ 
		\State 2. Set $ a = 1/(2 - 2^{\frac{1}{2k+1}}), \ b = 1-2a. $%\frac{1}{2 - 2^{\frac{1}{2k+1}}} , \qquad b = 1-2a. $
		\State 3. Set $(\tilde{z}_1,\tilde{v}_1)  = \Psi^{Y,2k-2}_{ah_j}(z_j,v_j,t_j), \ \tilde{t}_1 = t_j + ah_j$
		\State 4. Set $(\tilde{z}_2,\tilde{v}_2) = \Psi^{Y,2k-2}_{bh_j}(\tilde{z}_1,\tilde{v}_1,\tilde{t}_1), \ \tilde{t}_2 = \tilde{t}_1 + bh_j$
		
		\State 5. Set $(z_{j+1},v_{j+1})  = \Psi^{Y,2k-2}_{ah_j}(\tilde{z}_2,\tilde{v}_2,\tilde{t}_2), \ t_{j+1} = \tilde{t}_2 + ah_j$
		\If{adaptive time-stepping}
		\State 6a. compute the error of $z_{j+1},v_{j+1}$ w.r.t.\ the output of a $(2k+1)$st order integration method 
		\State 6b. compute the new $h_{j+1}$ following \cite{hairer2006}
		\Else
		\State 6. $h_{j+1} = h_j$
		\EndIf
		\State 7. Output: $(z_{j+1},v_{j+1},t_{j+1},h_{j+1})$
		%	%	\EndIf
	\end{algorithmic}
\end{algorithm}

\paragraph{Adaptive stepping} One of the main advantages in the construction of reversible methods based on ALF is that they allow for adaptive step sizes \cite{hairer2006}. This can be done in the same manner as for ALF \cite{zhuang2021mali}, where the main idea is to delete the computational graph and all the variables needed for the step size computations and only the value of the accepted new step size $h_j$ is saved. As a result, values $h_1, \dots, h_N$ are saved and then accessed in the integration backward needed for gradient computations. This can be done in exactly the same manner for our Yoshida-based methods. The resulting steps forward and backward are summarized in Algorithm~\ref{alg:step.forward} and Algorithm~\ref{alg:step.backward}.%at the end of each step forward the step size $h$ is tested using any numerical method or order $2k+1$.
%
%\begin{multicols}{2}
%\vspace{-2mm}
%
%\columnbreak

%
%\paragraph{Adaptive stepping} One of the main advantages in the construction of reversible methods based on ALF is that they allow for adaptive step sizes \cite{hairer2006}. This can be done in the same manner as for ALF \cite{zhuang2021mali}, where the main idea is to delete the computational graph and all the variables needed for the step size computations and only the value of the accepted new step size $h_j$ is saved. As a result, values $h_1, \dots, h_N$ are saved and then accessed in the integration backward needed for gradient computations. This can be done in exactly the same manner for our Yoshida-based methods. The resulting steps forward and backward are summarized in Algorithm~\ref{alg:step.forward} and Algorithm~\ref{alg:step.backward}.%at the end of each step forward the step size $h$ is tested using any numerical method or order $2k+1$.
%
%
%
\begin{remark}
	Notice that even though the reversible Heun method was proved to be of order $(2,1)$ in $(z,v)$ in \cite{Kidger2021a}, it was noted in the same paper that it gains the second order in both variables at even steps. This implies that the composition approach can be used in this case as well.%construction that we present for higher order reversible method based on ALF method can be done for the reversible Heun method as well.
\end{remark}
%
%\end{multicols}
%
%
%\chris{[There is a formatting issue here causing the next paragraph to start with "k"]}
%
%
%
%%
%
\begin{algorithm}
	\setstretch{1.55}
	\caption{Step backward of $2k$-th order Yoshida}\label{alg:step.backward}
	\begin{algorithmic}
		\State 1. Input: $(z_{j+1},v_{j+1},t_{j+1},h_{j+1})$
		\State 2. Set $ a = 1/(2 - 2^{\frac{1}{2k+1}}), \ b = 1-2a. $%\frac{1}{2 - 2^{\frac{1}{2k+1}}} , \qquad b = 1-2a. $
		\State 3. Set $(\tilde{z}_1,\tilde{v}_1)  = \Psi^{Y,2k-2}_{-ah_{j+1}}(z_{j+1},v_{j+1},t_{j+1}), \ \tilde{t}_1 = t_{j+1} - ah_{j+1}$
		\State 4. Set $(\tilde{z}_2,\tilde{v}_2) = \Psi^{Y,2k-2}_{-bh_{j+1}}(\tilde{z}_1,\tilde{v}_1,\tilde{t}_1), \ \tilde{t}_2 = \tilde{t}_1 - bh_{j+1}$
		
		\State 5. Set $(z_{j},v_{j})  = \Psi^{Y,2k-2}_{-ah_{j+1}}(\tilde{z}_2,\tilde{v}_2,\tilde{t}_2), \ t_{j} = \tilde{t}_2 - ah_{j+1}$
		
		\State 6. Set $h_j$ from $h_1, \dots, h_N$ obtained in the integration forward
		\State 7. Output: $(z_{j},v_{j},t_{j},h_j)$
		%	%	\EndIf
	\end{algorithmic}
\end{algorithm}
%\vspace{-3mm}
\paragraph{Gradient computations} The augmentation of the feature space leads to the new variable which we denote \chris{by} $\phi = (z,v)$. Then, the learning problem is formulated as follows with $P_z(\phi)$ projection of $\phi$ to $z$
\begin{equation} \label{eq:DL:optimal.problem.Y}
	\begin{aligned}
		& \min_{\{ \theta_j \}} J =  L(P_z(\phi_N), y) \\
		&  \phi_{j+1}= \Psi^{Y,2k}_h (\phi_j, \theta_j), \quad j = 0, \dots, N-1, \\
		&\phi_0 = (z,f(z,\theta_0)).
	\end{aligned}
\end{equation}
%
%The adjoint method leads to explicit formulas for the gradient computation using the step backward of the considered numerical methods as follows. For the method to provide exact gradients, it is important to obtain the correct discrete adjoint equation \eqref{eq:adjiont} corresponding to \eqref{eq:DL:optimal.problem}. 
Following \cite{Griesse2004}, the discrete version of \eqref{eq:adjiont} associated with \eqref{eq:DL:optimal.problem.Y} is given by 
\begin{equation} \label{eq:adjoint.discrete}
	\left(  \lambda_N  \right)^\top = \nabla L(\phi_N), \quad \lambda_j = \left( \frac{\partial \phi_{j+1}}{\partial \phi_j} \right)^\top \lambda_{j+1},
\end{equation}
and the gradients are computed by
\begin{equation} \label{eq:adjoint.grad.discrete}
	\frac{\partial J(\theta)}{\partial \theta_{j}} = \lambda_{j+1}^\top \frac{\partial \phi_{j+1}}{\partial \theta_{j}}.
\end{equation}
The adjoint method for the gradient computation as in \chris{the} MALI network \cite{zhuang2021mali} and \chris{the} reversible Heun \chris{network} \cite{Kidger2021a} is based on the propagation $\phi_j \rightarrow \phi_{j+1}$ and automatic differentiation  for the computation of the step backward of the adjoint variable following \eqref{eq:adjoint.discrete}. The resulting method of gradient computation is summarized in Algorithm~\ref{alg:reconstruction}. \textcolor{black}{Alternatively, the exact expression of the numerical method governing the adjoint dynamics \eqref{eq:adjoint.discrete} can be obtained, see details in Appendix~\ref{sec.adjoint}. In this case, there is no need to compute $\frac{\partial \phi_{j+1}}{\partial \phi_j}$, which makes the approach} \textcolor{black}{computationally more efficient and memory efficient.}
\begin{algorithm}
	\caption{Computation of gradients}\label{alg:reconstruction}
	\begin{algorithmic}
		\State 1. Input: training data $z_0$, initialization of parameters $\theta$, velocity $v_0 = f(z_0,\theta_0)$
		\State 2. Propagate through the network using $\Psi^{Y,2k}_h$ to get $(z_N, v_N)$
		\State 3. Set $\lambda^z_N = \nabla L(z_N, y)$ and $\lambda^v_N = 0$
		\For{\texttt{j = N-1 to 1}}
		\State 4. Compute $\phi_j$ from $\phi_{j+1}$ using Algorithm~\ref{alg:step.backward}
		\State 5. Compute $\phi_{j+1}$ from $\phi_{j}$ using Algorithm~\ref{alg:step.forward} to get the computational graph
		\State 6. Compute $\lambda_j$ from $\lambda_{j+1}$ using \textcolor{black}{\eqref{eq:adjoint.discrete}} and AD to compute $\frac{\partial \phi_{j+1}}{\partial \phi_j}$%Theorem~\ref{th:composition.grad}.
		\State 7. Compute $\frac{\partial J(\theta)}{\partial \theta_{j}}$ using \textcolor{black}{\eqref{eq:adjoint.grad.discrete}}
		\State 8. Delete $\lambda_{j+1}, \phi_{j+1}$ and the computational graphs 
		\EndFor
		
		\State 9. Output: gradients  $\frac{\partial J(\theta)}{\partial \theta_{j}}$ for $j = 1, \dots, N-1$. 
		%	\EndIf
	\end{algorithmic}
\end{algorithm}
%\vspace{-3mm}
%
\textcolor{black}{\paragraph{Costs comparison}  We will use the following notations: $d$ is the dimension of $z$, $T$ is the length of the time interval in the continuous-depth setting, $N$ is the number of layers, $M$ stands for the number of layers in $f$, when $f$ is given by a neural network itself, $s$ denotes the number of steps needed for the computation of a time-step in the adaptive step size selection, $p$ is the order of the considered numerical method and $r$ is the number of evaluations of $f$ used in the numerical method (e.g. stages in Runge-Kutta methods or compositions in our approach). We show the comparison of the new proposed approach with the standard backpropagation approach, adjoint method version NODE \cite{Chen2018}, ACA approach \cite{Zhuang2020} and MALI approach \cite{zhuang2021mali} in Table~\ref{tab:costs}, which extends the Table~1 in \cite{zhuang2021mali}. We use big $\mathcal{O}$ notation, when the constants depend on the learning tasks. }
%
%\vspace{-3mm}
\textcolor{black}{\paragraph{Computational costs} The compositional structure of the proposed method directly implies that the computation costs for gradient computations are equal to the computational costs by ALF multiplied by $r$, the number of the compositions.  Notice that $N$ depends on the order $p$ of the discretization method and becomes smaller when the order is higher for fixed $\varepsilon$ and $T$. As a result, ALF method needs more time-steps, than higher order methods for $\varepsilon < 1$, which is related to the bias in the learned parameters in the task of identification of the parameters, as explained in Appendix~\ref{sec:Kepler} and illustrated in Figure~\ref{fig:Error.landscape}, and to the training error.}
%
%\vspace{-3mm}
\textcolor{black}{\paragraph{Memory costs} The gradient computation requires to compute $\frac{\partial \phi_{j+1}}{\partial \phi_j}$ leading to the storage of all the intermediate states involved in the step forward. This increases the memory costs of MALI by a factor $r$, see Table~\ref{tab:costs}. Notice that the approach presented in Appendix~\ref{sec.adjoint} does not require to store all the intermediate states. Indeed, a step backward of the state-adjoint system is a composition of rescaled steps backward of the ALF method. Therefore, we only need to store one intermediate state obtained in the composition at a time. This makes the method of the same memory cost as MALI. Notice that depending on the depth of the network, adaptive checkpointing as 
	in \cite{Matsubara2021} can be added. When no checkpoints are needed the behaviour is as in NODE and in the worst case the behaviour is as in the backprop. In general, the number of checkpoints depends on $N$, which depends linearly on $T$. Therefore, more checkpoints are needed in case of large $T$.}
\begin{center}
	\begin{table}[]
		\caption{Comparison of costs in gradient computations for different approaches}
		\label{tab:costs}
		\centering
		\begin{tabular}{ |p{1.5cm}||p{4cm}|p{2.5cm}|p{3.5cm}| }
			\hline
			Method& Computational costs & Memory costs& Number of epochs $N$ in function of accuracy $\varepsilon^{*}$\\
			\hline
			Backprop &  $r\times d\times M \times N\times s \times 2$   & $r \times d\times M \times N\times s$& $T\times \mathcal{O}(\varepsilon^{-\frac{1}{p+1}})$\\
			NODE  &  $r\times d\times M \times N\times s \times 2$   & $d\times M $&$T\times \mathcal{O}(\varepsilon^{-\frac{1}{p+1}})$\\
			ACA & $r \times d\times M \times N\times (s+1)$  & $d\times (M + N)$&$T\times \mathcal{O}(\varepsilon^{-\frac{1}{p+1}})$\\
			MALI   & $ d\times M \times N\times (s+2)$ & $d\times (M + 1)$&$T\times \mathcal{O}(\varepsilon^{-\frac{1}{3}})$\\
			Proposed method &   $r\times d\times M \times N\times (s+2)$  & $r^{**}\times d\times (M + 1)$&$T\times \mathcal{O}(\varepsilon^{-\frac{1}{p+1}})$\\
			\hline
		\end{tabular} \\
		\footnotesize{$^{*}$ $\varepsilon$ is the error tolerance for the estimation of the local error in the stepsize selection}\\
		\footnotesize{$^{**}$ Memory costs of the proposed method can be reduced to $r = 1$, if the gradients are computed as in Appendix\ref{sec.adjoint}}
	\end{table}
\end{center}
%%
%\subsection{Learning dynamical systems}
%We consider the problem of learning dynamical systems, where the %observations of trajectories 
%
%\vspace{-10mm}
\section{Experiments}
%\vspace{-3mm}
%
\subsection{Parameter identification in dynamical systems}
We consider \chris{the identification} problem of unknown parameters of a dynamical system. The structure of the differential equations is assumed to be known, but some parameters in the equations are unknown. The training data is given by \textcolor{black}{snapshots of} trajectories $\{x_l(t_i)\}_{i,l}$ with $l= 1, \dots L, \ i = 0, \dots, I$. The goal is to learn the parameters from the given trajectories. This class of problems can be naturally treated using the neural ODE approach. The vector field $f$ in \eqref{eq:ODE} is given by the known differential equation and $\theta = \theta_1, \dots, \theta_s$ is the set of unknown parameters. In this case, the learning problem can be stated in the form \chris{of} \eqref{eq:DL:optimal.problem.Y}, where the same $\theta_1, \dots, \theta_s$ appear all at each layer. The training data $z_0 = (x_1(t_0), \dots, x_L(t_0))$ stands for the initial points and $y$ includes all the other points of the given trajectories. We denote by $y(t_i)$ the points in $y$ corresponding to trajectories at time $t_i$ for $i = 1, \dots, I$.  %stands for the final points of the training trajectories issued from~$z_0$. 
The loss have a particular structure in this case as it depends on the intermediate states obtained during the integration of neural ODE, namely, it depends on $(z_{N_1}, \dots, z_{N_I})$, to measure the distance with the given trajectories points $(y(t_1), \dots, y(t_I))$. As a result, it takes the form  $L = \sum_{i=1}^I L_i(z_{N_i}, y(t_i))$. Because of the additive form of the loss, the gradients can be computed as a sum of the corresponding gradients of $L_1, \dots, L_I$ as follows
%\vspace{-1mm}
$$\frac{\partial L}{ \partial \theta_i} =  \frac{\partial L_1}{ \partial \theta_i} + \cdots + \frac{\partial L_I}{ \partial \theta_i}, \qquad i=1, \dots, s,
%\vspace{-1mm}
$$
where each of the terms in the sum is computed using Algorithm~\ref{alg:reconstruction}. The memory efficiency is still important in this case, because we do not store all the intermediate states at the propagation forward, but only the states which approximate the trajectories at the desired times $t_1, \dots, t_I$. %In our numerical experiments we set $L_i(z_{N_i}, y(t_i)) = \frac{1}{L}\|z_{N_i} - y(t_i)\|_{l_2}^2$ with $L$ the number of trajectories and compare the performance of numerical integrators $\Psi^{Y,4}$ and $\Psi^{ALF}$ in the training.
%%
%
%measures the difference between the trajectories obtained using numerical integration by $\Psi^{Y,4}$ or $\Psi^{ALF}$  with the parameter values $\theta_1, \dots, \theta_s$ and the training trajectories, namely $L = \|z_N(\theta) - y\|_{l_2}^2$ \chris{with $z_N(\theta)$ the endpoints of all the} \chris{numerical} trajectories parameterized by $\theta = \theta_1, \dots, \theta_s$. % and \chris{with the} $i$th trajectory from the training set $x_i$.
%
%\chris{[Is it true that only errors at the endpoints of trajectories are used to compute the loss? Are the two $L$s the same?]}
%\textcolor{black}{SO: is this a comparison at the final points of the trajectories or along the trajectory? It would be clearer if you also explain what $t_i$ is.}
%
%\vspace{-3mm}
\textcolor{black}{
	\paragraph{Statistical inference} In simulation based inference or likelihood-free inference probabilistic methods are employed to identify parameters in models based on repeated forward simulations \cite{Cranmer2020}. Traditionally, these consider the forward pass as a black box (such as Approximate Bayesian Computation (ABC) \cite{Rubin1984,Beaumont2002}) and do not require differentiability with respect to the model parameters or the inputs. This needs to be contrasted to our proposed neural network architecture, which is designed to circumvent large memory requirements in the computation of gradients when the layers are wide. Indeed, a combination of our architecture with simulation based inference models that do make use of gradients such as \cite{Graham2017} constitutes an interesting avenue for future research.}
%\vspace{-2mm}
%
\subsubsection{Kepler problem}
%\vspace{-2mm}
We consider the Kepler problem, where the dynamics describes the evolution of the position $q$ and velocity $v$ of a mass point moving around a much heavier body. \chris{It is modeled on the 4}-dimensional space $x = (q,v) \in \R^2 \times \R^2$.
%We assume that one parameter in the equations is unknown. 
The equations are defined on \textcolor{black}{the} time interval $[0,1]$ as follows
%\vspace{-1mm}
\begin{equation} \label{eq:Kepler}%\tag{$**$}
	%\begin{aligned}
	\dot{q} = v, \quad \dot{v} = - \frac{\alpha}{\| q\|^{3}} \, q, 
	%\end{aligned}
	%\vspace{-1mm}
\end{equation}
with an unknown parameter $\alpha \in \R$. The training set is given by the initial condition $x(t_0)$ and $q$-coordinate of $5$ points on a trajectory of \eqref{eq:Kepler} generated with $\alpha = \pi/4 \approx 0.785$, i.e.\ $\{ q(t_i)\}^5_{i=1}$. The task is to learn $\alpha$ as accurately as possible. From the training set we form $z_0 = x(t_0)$ and the corresponding $y(t_i) =  q(t_i)$ for $i = 1, \dots, 5$. %To learn $\alpha$ 
We set up a learning problem in \chris{the} form \chris{of} \eqref{eq:DL:optimal.problem.Y} with the loss defined by $L = \sum_{i=1}^5\|q_{N_i} - q(t_i)\|^2$ with $q_{N_i}$ projection of $z_{N_i}$ to $q$-coordinate and $N_i$ the number of time-steps used in the integration from $t_{i-1}$ to $t_i$. We compare two algorithms \chris{for performing the numerical integration during training}, namely, ALF and the Yoshida composition of ALF2 of order 4. We write Y4 for the Yoshida composition method for shortness. %\begin{itemize}
%\item Neural ODE given by the Kepler dynamics 
%	\item Integrate \eqref{eq:Kepler} with ALF or Yoshida composition of ALF2 to get $x_i$
%	\item learn parameter $\alpha$ by minimizing the loss $L = \sum_{i}  \|x_i - x(t_i)\|_{l_2}^2$
%\end{itemize} 
We test the \chris{wall-clock} time required to reach loss accuracy $10^{-8}$ using adaptive methods. The tests are run for different initializations of $\alpha$ in optimization. %, while the true value $\alpha = \frac{\pi}{4} \approx 0.785$ is used to generate training data. 
The results can be seen in Table~\ref{table:Kepler_time}. It can be observed that in all tests, Y4 is at least four times faster than ALF. 
\begin{table}
	\caption{Time to reach accuracy $10^{-8}$ using adaptive methods in the Kepler problem}
	\label{table:Kepler_time}
	\centering
	\begin{tabular}{ |p{3cm}||p{3cm}|p{3cm}| }
		\hline
		\multicolumn{3}{|c|}{Computation time} \\
		\hline
		Initial value of $\alpha$ & adaptive ALF  &  adaptive Y4\\
		\hline
		0.1 &  7.68 sec  &  {2.42 sec}   \\
		\hline
		%	0.6   &  4.76 sec   & {1.89 sec}  \\
		%	\hline
		0.7   &  4.07 sec   &  {1.02 sec}  \\
		\hline
		0.75   &  3.26 sec    &  {0.803 sec}    \\
		\hline
		0.8   &  2.5 sec   & { 0.44 sec}  \\
		%	\hline
		%	0.9&    5.65 sec   & { 1.25 sec } \\
		\hline
		1.3 & 8.39 sec & { 3.85 sec }\\
		\hline
	\end{tabular}
\end{table}
\textcolor{black}{The reason of the faster training for Y4 is in using larger step sizes for the forward integration. The lower order method requires smaller step sizes to reach the same accuracy \textcolor{black}{defined by an error tolerance} and this leads to more steps in the computation of trajectories. Additional results for the Kepler problem supporting the reasoning can be found in Appendix~\ref{sec:Kepler}.}
%Next, we compare the accuracy in learning the parameter using %fixed step methods.
%methods with fixed step-sizes.
%The accuracy \chris{is} analyzed using the error landscape for ALF and Y4 for fixed step size $h = 0.01$. The results in Figure~\ref{fig:Error.landscape} show that the minimizer of Y4 method is closer to the true value of the parameter than the minimizer of ALF, which means that Y4 will provide more accurate results in the optimization. % independently of the algorithm used for the optimization. %in the learning.
%\textcolor{black}{SO: Which solver used in learning is meant here?}
%
%\vspace{-2mm}
\subsubsection{Nonlinear harmonic oscillator} \label{sec:nonlin.osc.param}
%\vspace{-2mm}
In the second example we consider a system of coupled Duffing oscillators, which describes the movements of a coupled system of mass points attached with springs with nonlinear elastic forces. The dynamics of $N$ mass points is given by the following equations
%\vspace{-2mm}
\begin{equation} \label{eq:coupled_osc}%\tag{$**$}
	%\begin{aligned}
	\dot{q}_i = v_i, \quad 
	\dot{v}_i = - a_iq_i - b_iq_i^3 - \sum_{j=1}^{N} e_{i,j}(q_i - q_j),  \qquad i = 1, \dots, N,\\
	%&\dot{q}_2 = v_2, \quad \dot{v}_2 = - a_2q_2 - b_2q_2^3 - e(q_2 - %q_1),  
	%\end{aligned}
	%\vspace{-2mm}
\end{equation}
with the condition $e_{i,j} = e_{j,i}$. Positions of $N$ mass points are given by $q = (q_1, \cdots, q_N) \in \R^N$ and velocities by $v = (v_1, \cdots, v_N) \in \R^N$. We set $x =(q,v) \in \R^{2N}$. In the numerical experiments we fix $N = 10$ and assume that parameters $a_i, b_i, e_{i,j} \in \R$ for $i,j = 1, \dots, 10$ are unknown. As a result, \eqref{eq:coupled_osc} has dimension $20$ with $65$ unknown parameters. The training set consists of initial and final positions of 200 trajectories, that is $z_0 = (x_1(t_0), \dots, x_{200}(t_0))$ and $y = y(t_1) = (x_1(t_1), \dots, x_{200}(t_1))$. In this setting, we compare  the computational time to reach a certain training accuracy of ALF and Y4 with adaptive time-stepping and the training accuracy of ALF and Y4 with fixed step-size. %In all the tests the true values of the parameters are set to $a_1 = 2, \ a_2 = 0.7, \ b_1 = -0.4, \ b_2 = 3, \ e = 1$. 
We present the \chris{wall-clock times} to reach the training accuracy $10^{-4}$ %results on the learning using the adaptive methods
in Table~\ref{table:Osc1}. The time required by Y4 to reach accuracy $10^{-4}$ is almost two times smaller which
%proves
illustrates the lower computational costs of the method. \textcolor{black}{As before, the ALF method with adaptive time-stepping requires smaller step sizes and more steps are used in each epoch of the optimization. Details with additional results confirming the behaviour are presented in Appendix~\ref{sec:annex.nonlin.osc}.} % In Table~\ref{table:Osc2} we show the results in accuracy of the learned parameters in case $h=0.01$. 
%, when the learning is done until accuracy $10^{-5}$ in the loss. We can observe that the accuracy of Y4 is one order higher than the one by ALF.
%
%\chris{[It is not clear to me what `accuracy' refers to in the tables and the text proceeding them. Is it the error of the parameters or the value of the loss function?]}
%
% 
\begin{table}
	%\vspace{-2mm}
	\caption{Time to reach accuracy $10^{-4}$ using adaptive methods in nonlinear oscillators problem}
	\label{table:Osc1}
	\centering
	\begin{tabular}{ |p{6cm}||p{3cm}|p{3cm}| }
		\hline
		\multicolumn{3}{|c|}{Computation time} \\
		\hline
		Mean parameter error at initialization & adaptive ALF  &  adaptive Y4\\
		\hline
		$0.28897009$
		&  $81608$ sec  &  {$51720$ sec}   \\
		\hline
		%	0.6   &  4.76 sec   & {1.89 sec}  \\
		%	\hline
		$0.29821727$  &   $68645$ sec   &  {$40646$ sec}  \\
		\hline
		$0.30549358 $   &  56764 sec    &  {29990 sec}    \\
		\hline
		$0.30289593$   &  96301 sec   & { 46524 sec}  \\
		%	\hline
		%	0.9&    5.65 sec   & { 1.25 sec } \\
		\hline
		$0.29106813$  & 22161 sec & { 13790 sec }\\
		\hline
	\end{tabular}
	%\vspace{-2mm}
\end{table}
% \begin{table}
	% %\vspace{-2mm}
	%   \caption{Accuracy in the learned parameters in nonlinear oscillators problem}
	%   \label{table:Osc2}
	%   \centering
	% \begin{tabular}{ |p{6cm}||p{3cm}|p{3cm}| }
		% 	\hline
		% 	\multicolumn{3}{|c|}{Error in parameter accuracy in learning with fixed step size $h = 0.1$} \\
		% 	\hline
		% 	Mean parameter error at initialization &  ALF  &   Y4\\
		% 	\hline
		% 	$0.28897009$ %$a_1 \approx 1.629$, $a_2 \approx 0.751$, $b_1 \approx -0.385$, $b_2 \approx 3.513$, $e \approx 1.167$ 
		%  &  $2.053 10^{-02}$  &  $3.396 10^{-03} $  \\
		% 	\hline
		% %	0.6   &  4.76 sec   & {1.89 sec}  \\
		% %	\hline
		% 	$0.28897009$ %$a_1 \approx 1.9$, $a_2 \approx 0.792$, $b_1 \approx -0.28$, $b_2 \approx 2.644$, $e \approx 0.788$  
		%  &  $2.019 10^{-02}$   &  $6.285 10^{-03}$ \\
		% 	\hline
		% 	$0.28897009$ %$a_1 \approx 1.923$, $a_2 \approx 0.5$, $b_1 \approx -0.411$, $b_2 \approx 2.883$, $e \approx 0.952$  
		%  &  $2.011 10^{-02}$    &  $1.865 10^{-03}$  \\
		% 		\hline
		% 	$0.28897009$ %$a_1 \approx 1.666$, $a_2 \approx 0.855$, $b_1 \approx -0.329$, $b_2 \approx 3.753$, $e \approx 0.993$ 
		%  &  $2.002 10^{-02}$   & $8.815 10^{-03}$ \\
		% %	\hline
		% %	0.9&    5.65 sec   & { 1.25 sec } \\
		% 	\hline
		% 	$0.28897009$ %$a_1 \approx 2.471$, $a_2 \approx 0.629$, $b_1 \approx -0.477$, $b_2 \approx 2.175$, $e \approx 0.7645$  
		%  & $2.003 10^{-02}$
		%  & $8.567 10^{-03}$ \\
		% 	\hline
		% \end{tabular}
	% %\vspace{-2mm}
	% \end{table}
%%
%
%
%\vspace{-2mm}  
\subsection{Learning of dynamical systems parameterized by neural network}
%\vspace{-2mm}
We consider a problem, where a part of the structure of the differential equations is known and the unknown part is approximated using a neural network. Our goal is to find the neural network parameterization such that the resulting trajectories of the system are as close as possible to given trajectories from the training set. As before, the problem can be treated by the neural ODE approach. In this case the vector field $f$ in \eqref{eq:ODE} is given by a neural network. %The learning problem \eqref{eq:DL:optimal.problem.Y} is then to find the parameters of the network. As before, we need to integrate $f$ by a reversible method. The training is done as in the problem of identification of parameters. The only difference is that the true vector field is only approximated by a network and we learn the best possible approximation. %%%The training data $z_0$ is the set of initial points of trajectories and  $y$ stand for the final points on the training trajectories. Loss  $L(\Phi(\phi_N),y)$ measures the difference between the trajectories obtained using the numerical integration by $\Psi^{Y,2k}_h$ and the value of parameters $\theta_1, \dots, \theta_s$ and the training trajectories, namely $L = \sum_{i}  \|x_i - x(t_i)\|_{l_2}^2$. 
%\vspace{-2mm}
\subsubsection{Nonlinear harmonic oscillator} \label{sec:nonlin.osc.nn}
%\vspace{-2mm}
We consider the problem of approximating the potential function of a physical system, %In this case, the vector field of the physical system is approximated by a neural network and then integrated to obtain the final time points of the trajectories. The result is compared with the true trajectories. We consider again 
\textcolor{black}{given by the Duffing oscillators with two mass points. Equations can be equivalently written as}
%\vspace{-1mm}
\begin{equation} \label{eq:coupled_osc.potential}%\tag{$**$}
	%\begin{aligned}
	\dot{q} = v, \quad \dot{v} = -\nabla V(q), 
	%\vspace{-1mm}
	%\end{aligned}
\end{equation}
with $q = (q_1, q_2), \ v = (v_1, v_2)$ and $V(q)$ stands for the potential energy of the system. %given by 
% \begin{equation} \label{eq:potential.nonlin.osc}
	%  V(q) = \frac{a_1}{2}q_1^2 + \frac{a_2}{2}q_2^2 + \frac{e}{2}(q_2- q_1)^2 + \frac{b_1}{4}q_1^4 + \frac{b_2}{4}q_2^4.   
	% \end{equation}    
The learning task is to learn $V(q)$. %the unknown potential. 
\sofya{The gradient of the potential is approximated using a neural network with 51500 parameters.} \textcolor{black}{To obtain the potential from the learned vector field, we apply numerical integration methods to the neural network approximating $-\nabla V(q)$.} % which approximates the unknown vector field governing $\dot v$. 
We compare the computational time of ALF and Y4 to \chris{reduce the value of the loss function below} $10^{-2}$. % in the loss.
The results presented in Table~\ref{table:OscNN} show \textcolor{black}{that Y4 is faster than ALF in completing the training on different random initializations of the network parameters.}%the better performance of the higher order method \textcolor{black}{Formulate better the observed advantages (what is better and what are the conclusions), refer to Appendix for more numerical results!!!!!}. %This suggests that higher order reversible methods can be used in such learning tasks \chris{to accelerate training.}
%for acceleration of the learning. 
%
\begin{table}
	%\vspace{-2mm}
	\caption{Time to get accuracy $10^{-2}$ by adaptive methods in oscillators problem~parameterized~by~NN}
	\label{table:OscNN}
	\centering
	\begin{tabular}{ |p{6cm}||p{3cm}|p{3cm}| }
		\hline
		\multicolumn{3}{|c|}{Computation time} \\
		\hline
		Random initialization of parameters in NN & adaptive ALF  &  adaptive Y4\\
		\hline
		Initialization 1
		&  2504 sec  &  {1974 sec}   \\
		\hline
		%	0.6   &  4.76 sec   & {1.89 sec}  \\
		%	\hline
		Initialization 2  &   2961 sec   &  {1857 sec}  \\
		\hline
		Initialization 3   &  4185 sec    &  {2627 sec}    \\
		\hline
		Initialization 4   &  3542 sec  & {2125 sec}  \\
		%	\hline
		%	0.9&    5.65 sec   & { 1.25 sec } \\
		\hline
		Initialization 5  & 3396 sec & { 2616 sec }\\
		\hline
	\end{tabular}
	%\vspace{-2mm}
\end{table}
\textcolor{black}{
	%\vspace{-2mm}
	\subsubsection{Discretized wave equation} \label{sec:wave.nn}
	%\vspace{-2mm}
	In the second example we consider \textcolor{black}{the} 1-dimensional wave equation $u_{tt}(t,x) = u_{xx}(t,x) - \nabla V(u(t,x))$ \textcolor{black}{on the spatial-temporal domain $[0, 1] \times [0,0.3]$} %{\color{red} is it possible to write here $\mathcal{D}=[0,T]\times [a,b]$ and precise the values of T,a,b only in the table 5? to avoid the question like: why this specific domain?} 
	with periodic boundary conditions \textcolor{black}{in space for the potential $V(u)=\frac 12 u^2$. On a spatial, equidistant, periodic mesh with mesh width $\Delta x = \frac 1 {40}$ we seek to describe the system's evolution by the first order system }
	%and given initial condition $u(0,x) = u_0(x)$, where $u:  [0, 0.5] \times [0,0.1] \rightarrow \R$ is a wave function and $V: L_2 \rightarrow \R$ is a wave potential. We apply 5 point stencil to discretize the wave equation, resulting in the following second order ODE
	%\vspace{-1mm}
	\begin{equation} \label{eq:disc:PDE}
		\dot u_d = v_d, \quad \dot v_d = f(u_d), 
		%\vspace{-1mm}
	\end{equation}
	\textcolor{black}{where the unknown function $f$ is parametrized as a fully connected ReLU neural network with one hidden layer of size $100$.}
	%with $f(q)$ function, which is unknown and approximated by a neural network.
	The dimension of $(u_d, v_d)$ is 40.
	We compare \textcolor{black}{the training performance of} ALF and Y4.
	\textcolor{black}{Both adaptive methods are employed with the same error tolerance.}
	Yoshida is faster in finishing each epoch and \textcolor{black}{the optimizer takes less time to minimize the training loss below} $10^{-3}$. The precise results are \textcolor{black}{reported} in Table~\ref{table:PDENN} for 5 \textcolor{black}{random} initializations in the training.}
\textcolor{black}{This illustrates the applicability of our method to the highly active research area of learning models of systems that are governed by partial differential equations.}
\begin{table}
	%\vspace{-2mm}
	\caption{\textcolor{black}{Wallclock} time to get \textcolor{black}{the training loss below} $10^{-3}$ by adaptive methods in discretized~PDE}
	\label{table:PDENN}
	\centering
	\begin{tabular}{ |p{6cm}||p{3cm}|p{3cm}| }
		\hline
		\multicolumn{3}{|c|}{Computation time} \\
		\hline
		Random initialization of parameters in NN & adaptive ALF  &  adaptive Y4\\
		\hline
		Initialization 1
		&  336.6808 sec  &  {138.5936 sec}   \\
		\hline
		%	0.6   &  4.76 sec   & {1.89 sec}  \\
		%	\hline
		Initialization 2  &   169.5010 sec   &  {133.6825 sec}  \\
		\hline
		Initialization 3   &  180.8185 sec    &  {140.3172 sec}    \\
		\hline
		Initialization 4   &  153.7389 sec  & {128.2795 sec}  \\
		%	\hline
		%	0.9&    5.65 sec   & { 1.25 sec } \\
		\hline
		Initialization 5  & 142.7732 sec & { 142.9196 sec }\\
		\hline
	\end{tabular}
	%\vspace{-2mm}
\end{table}
%
%\vspace{-3mm}
\section{Conclusion}
%\vspace{-3mm}
In this work, we \chris{construct} %proposed a construction approach for 
higher order reversible methods. \chris{These constitute explicit numerical integrators which are compatible with adaptive step-size selection strategies. The methods are employed to train deep neural networks that are based on neural ODEs. Thanks to the reversibility property, we avoid high memory requirements for backpropagation in the optimization procedure of the network parameters.}
\textcolor{black}{Memory efficient backpropagation allows an application of deep architectures to the identification tasks of models of high-dimensional dynamical systems, which arise, for instance, as spatial discretizations of partial differential equations. As the method is based on neural ODEs, it can be trained with time-series data at irregular time-steps and can predict continuous time-series data.}
%and the corresponding network architecture.
We showed the advantages of the newly constructed networks on the example of a network based on a 4th order method
\textcolor{black}{and demonstrate lower memory costs and 
	faster training in comparison to lower order methods. }
%The advantages are twofold. On the one hand, the higher order method leads to lower computational costs which
%is expressed in smaller
%\chris{results in faster training.}
%computational time in case of adaptive stepping.
%\chris{Moreover, in parameter identification tasks, higher }%On the other hand, better
%accuracy is reached using higher order method \chris{at a given} %fixed
%step size.

\textcolor{black}{While the examples in the article focus on system identification tasks for systems governed by differential equations, extensions to neural stochastic differential equations \cite{Kidger2021a} are of interest and applications to normalizing flows or image processing \cite{Blanchette2020} can be an exciting avenue to explore in future works.
}

%Such a construction of a neural network can be used for approximation properties of general functions. We only identified the advantages on the tasks of learning of dynamical systems. The advantages of the suggested network in case of other learning tasks has to be further investigated. One %straightforward applications
%\chris{direction for future research are higher order reversible techniques to train} neural SDEs as investigated in \cite{Kidger2021a}.

%It would be of interest to analyse the resulting order of approximation in this case. 

%\subsubsection*{Acknowledgments}
%Use unnumbered third level headings for the acknowledgments. All
%acknowledgments, including those to funding agencies, go at the end of the paper.

\bibliography{Sofya_biblio}

\begin{thebibliography}{10}

\bibitem{Blanchette2020}
Christine Allen-Blanchette, Sushant Veer, Anirudha Majumdar, and Naomi~Ehrich
  Leonard.
\newblock {LagNetViP}: A {L}agrangian neural network for video prediction
  ({AAAI} 2020 symposium on physics guided ai), 2020.

\bibitem{BailerJones1998}
Coryn A~L Bailer-Jones, David J~C MacKay, and Philip~J Withers.
\newblock A recurrent neural network for modelling dynamical systems.
\newblock {\em Network: Computation in Neural Systems}, 9(4):531, nov 1998.

\bibitem{Beaumont2002}
Mark~A Beaumont, Wenyang Zhang, and David~J Balding.
\newblock Approximate bayesian computation in population genetics.
\newblock {\em Genetics}, 162(4):2025–2035, December 2002.

\bibitem{BlanesCasasMurua2024}
Sergio Blanes, Fernando Casas, and Ander Murua.
\newblock Splitting methods for differential equations.
\newblock {\em Acta Numerica}, 33:1–161, 2024.

\bibitem{blanes2024splittingmethodsdifferentialequations}
Sergio Blanes, Fernando Casas, and Ander Murua.
\newblock Splitting methods for differential equations, 2024.

\bibitem{Boulle2024}
Nicolas Boullé and Alex Townsend.
\newblock Chapter 3 - a mathematical guide to operator learning.
\newblock In Siddhartha Mishra and Alex Townsend, editors, {\em Numerical
  Analysis Meets Machine Learning}, volume~25 of {\em Handbook of Numerical
  Analysis}, pages 83--125. Elsevier, 2024.

\bibitem{Bouvrie2017}
Jake Bouvrie and Boumediene Hamzi.
\newblock Kernel methods for the approximation of nonlinear systems.
\newblock {\em SIAM Journal on Control and Optimization}, 55(4):2460--2492,
  2017.

\bibitem{Brunton2016}
Steven~L. Brunton, Joshua~L. Proctor, and J.~Nathan Kutz.
\newblock Discovering governing equations from data by sparse identification of
  nonlinear dynamical systems.
\newblock {\em Proceedings of the National Academy of Sciences},
  113(15):3932--3937, 2016.

\bibitem{CELLEDONI2021}
E.~Celledoni, M.~J. Ehrhardt, C.~Etmann, R.~I. Mclachlan, B.~Owren, C.-B.
  Schonlieb, and F.~Sherry.
\newblock Structure-preserving deep learning.
\newblock {\em European Journal of Applied Mathematics}, 32(5):888--936, 2021.

\bibitem{chang2018}
Bo~Chang, Lili Meng, Eldad Haber, Lars Ruthotto, David Begert, and Elliot
  Holtham.
\newblock Reversible architectures for arbitrarily deep residual neural
  networks.
\newblock In {\em Proceedings of the AAAI conference on artificial
  intelligence}, volume~32, 2018.

\bibitem{Chen2018}
Ricky T.~Q. Chen, Yulia Rubanova, Jesse Bettencourt, and David~K Duvenaud.
\newblock Neural ordinary differential equations.
\newblock In {\em Advances in Neural Information Processing Systems}, 2018.

\bibitem{Chen1995}
Tianping Chen and Hong Chen.
\newblock Universal approximation to nonlinear operators by neural networks
  with arbitrary activation functions and its application to dynamical systems.
\newblock {\em IEEE Transactions on Neural Networks}, 6(4):911--917, 1995.

\bibitem{Chen2020Symplectic}
Zhengdao Chen, Jianyu Zhang, Martin Arjovsky, and Léon Bottou.
\newblock Symplectic recurrent neural networks.
\newblock In {\em International Conference on Learning Representations}, 2020.

\bibitem{Cranmer2020}
Kyle Cranmer, Johann Brehmer, and Gilles Louppe.
\newblock The frontier of simulation-based inference.
\newblock {\em Proceedings of the National Academy of Sciences},
  117(48):30055--30062, 2020.

\bibitem{LNN}
Miles Cranmer, Sam Greydanus, Stephan Hoyer, Peter Battaglia, David Spergel,
  and Shirley Ho.
\newblock Lagrangian neural networks, 2020.

\bibitem{Dahmen2022}
Wolfgang Dahmen and Arnold Reusken.
\newblock {\em Numerik für Ingenieure und Naturwissenschaftler: Methoden,
  Konzepte, Matlab-Demos, E-Learning}.
\newblock Springer Berlin Heidelberg, 2022.

\bibitem{DAVID2023112495}
Marco David and Florian Méhats.
\newblock Symplectic learning for hamiltonian neural networks.
\newblock {\em Journal of Computational Physics}, 494:112495, 2023.

\bibitem{Deuflhard2002}
Peter Deuflhard and Folkmar Bornemann.
\newblock {\em Scientific Computing with Ordinary Differential Equations}.
\newblock Springer New York, 2002.

\bibitem{Esteva2017}
Andre Esteva, Brett Kuprel, Roberto~A Novoa, Justin Ko, Susan~M Swetter,
  Helen~M Blau, and Sebastian Thrun.
\newblock Dermatologist-level classification of skin cancer with deep neural
  networks.
\newblock {\em Nature}, 542(7639):115—118, February 2017.

\bibitem{Ghadami2022}
Amin Ghadami and Bogdan~I. Epureanu.
\newblock Data-driven prediction in dynamical systems: recent developments.
\newblock {\em Philosophical Transactions of the Royal Society A: Mathematical,
  Physical and Engineering Sciences}, 380(2229):20210213, 2022.

\bibitem{Gholaminejad2020}
Amir Gholaminejad, Kurt Keutzer, and George Biros.
\newblock Anode: Unconditionally accurate memory-efficient gradients for neural
  odes.
\newblock {\em International Joint Conferences on Artificial Intelligence}.

\bibitem{Graham2017}
Matthew~M. Graham and Amos~J. Storkey.
\newblock {Asymptotically exact inference in differentiable generative models}.
\newblock {\em Electronic Journal of Statistics}, 11(2):5105 -- 5164, 2017.

\bibitem{Greydanus2019}
Samuel Greydanus, Misko Dzamba, and Jason Yosinski.
\newblock Hamiltonian neural networks.
\newblock In H.~Wallach, H.~Larochelle, A.~Beygelzimer, F.~d\textquotesingle
  Alch\'{e}-Buc, E.~Fox, and R.~Garnett, editors, {\em Advances in Neural
  Information Processing Systems}, volume~32. Curran Associates, Inc., 2019.

\bibitem{Griesse2004}
R.~Griesse and A.~Walther.
\newblock Evaluating gradients in optimal control: Continuous adjoints versus
  automatic differentiation.
\newblock {\em Journal of Optimization Theory and Applications}, 122(1):63--86,
  2004.

\bibitem{Haber2017}
Eldad Haber and Lars Ruthotto.
\newblock Stable architectures for deep neural networks.
\newblock {\em Inverse Problems}, 34(1):014004, dec 2017.

\bibitem{hairer2006}
Ernst Hairer, Marlis Hochbruck, Arieh Iserles, and Christian Lubich.
\newblock Geometric numerical integration.
\newblock {\em Oberwolfach Reports}, 3(1):805--882, 2006.

\bibitem{Hairer2013}
Ernst Hairer, Christian Lubich, and Gerhard Wanner.
\newblock {\em Geometric Numerical Integration: Structure-Preserving Algorithms
  for Ordinary Differential Equations}.
\newblock Springer Series in Computational Mathematics. Springer Berlin
  Heidelberg, 2013.

\bibitem{Hamzi2021}
Boumediene Hamzi and Houman Owhadi.
\newblock Learning dynamical systems from data: A simple cross-validation
  perspective, part i: Parametric kernel flows.
\newblock {\em Physica D: Nonlinear Phenomena}, 421:132817, 2021.

\bibitem{Funahashi1993}
Ken ichi Funahashi and Yuichi Nakamura.
\newblock Approximation of dynamical systems by continuous time recurrent
  neural networks.
\newblock {\em Neural Networks}, 6(6):801--806, 1993.

\bibitem{Jin2020}
Pengzhan Jin, Zhen Zhang, Aiqing Zhu, Yifa Tang, and George~Em Karniadakis.
\newblock Sympnets: Intrinsic structure-preserving symplectic networks for
  identifying hamiltonian systems.
\newblock {\em Neural Networks}, 132:166--179, 2020.

\bibitem{Karniadakis2021}
George~Em Karniadakis, Ioannis~G. Kevrekidis, Lu~Lu, Paris Perdikaris, Sifan
  Wang, and Liu Yang.
\newblock Physics-informed machine learning.
\newblock {\em Nature Reviews Physics}, 3(6):422--440, 2021.

\bibitem{Kidger2021a}
Patrick Kidger, James Foster, Xuechen~(Chen) Li, and Terry Lyons.
\newblock Efficient and accurate gradients for neural sdes.
\newblock In M.~Ranzato, A.~Beygelzimer, Y.~Dauphin, P.S. Liang, and J.~Wortman
  Vaughan, editors, {\em Advances in Neural Information Processing Systems},
  volume~34, pages 18747--18761. Curran Associates, Inc., 2021.

\bibitem{Kramer2024}
Boris Kramer, Benjamin Peherstorfer, and Karen~E. Willcox.
\newblock Learning nonlinear reduced models from data with operator inference.
\newblock {\em Annual Review of Fluid Mechanics}, 56(Volume 56, 2024):521--548,
  2024.

\bibitem{Lin2023}
Guang Lin, Christian Moya, and Zecheng Zhang.
\newblock Learning the dynamical response of nonlinear non-autonomous dynamical
  systems with deep operator neural networks.
\newblock {\em Engineering Applications of Artificial Intelligence},
  125:106689, 2023.

\bibitem{Liu2022}
Yuying Liu, J.~Nathan Kutz, and Steven~L. Brunton.
\newblock Hierarchical deep learning of multiscale differential equation
  time-steppers.
\newblock {\em Philosophical Transactions of the Royal Society A: Mathematical,
  Physical and Engineering Sciences}, 380(2229):20210200, 2022.

\bibitem{Lu2021}
Lu~Lu, Pengzhan Jin, Guofei Pang, Zhongqiang Zhang, and George~Em Karniadakis.
\newblock Learning nonlinear operators via deeponet based on the universal
  approximation theorem of operators.
\newblock {\em Nature Machine Intelligence}, 3(3):218--229, 2021.

\bibitem{maslovskaya2024}
Sofya Maslovskaya and Sina Ober-Blöbaum.
\newblock Symplectic methods in deep learning.
\newblock {\em IFAC-PapersOnLine}, 58(17):85--90, 2024.
\newblock 26th International Symposium on Mathematical Theory of Networks and
  Systems MTNS 2024.

\bibitem{Blanchette2022}
Justice Mason, Christine Allen-Blanchette, Nicholas Zolman, Elizabeth Davison,
  and Naomi Leonard.
\newblock Learning interpretable dynamics from images of a freely rotating 3d
  rigid body, 2022.

\bibitem{Matsubara2021}
Takashi Matsubara, Yuto Miyatake, and Takaharu Yaguchi.
\newblock Symplectic adjoint method for exact gradient of neural ode with
  minimal memory.
\newblock In M.~Ranzato, A.~Beygelzimer, Y.~Dauphin, P.S. Liang, and J.~Wortman
  Vaughan, editors, {\em Advances in Neural Information Processing Systems},
  volume~34, pages 20772--20784. Curran Associates, Inc., 2021.

\bibitem{mutze2016asynchronous}
Ulrich Mutze.
\newblock An asynchronous leapfrog method ii, 2016.

\bibitem{Offen2023}
Sina Ober-Blöbaum and Christian Offen.
\newblock Variational learning of euler–lagrange dynamics from data.
\newblock {\em Journal of Computational and Applied Mathematics}, 421:114780,
  March 2023.

\bibitem{Offen2022}
C.~Offen and S.~Ober-Blöbaum.
\newblock Symplectic integration of learned hamiltonian systems.
\newblock {\em Chaos: An Interdisciplinary Journal of Nonlinear Science},
  32(1), 1 2022.

\bibitem{offen2024machinelearningdiscretefield}
Christian Offen.
\newblock Machine learning of discrete field theories with guaranteed
  convergence and uncertainty quantification, 2024.

\bibitem{Offen2024}
Christian Offen and Sina Ober-Blöbaum.
\newblock Learning of discrete models of variational pdes from data.
\newblock {\em Chaos: An Interdisciplinary Journal of Nonlinear Science},
  34(1), January 2024.

\bibitem{Qin2020}
Hong Qin.
\newblock Machine learning and serving of discrete field theories.
\newblock {\em Scientific Reports}, 10(1), November 2020.

\bibitem{rackauckas2021}
Christopher Rackauckas, Yingbo Ma, Julius Martensen, Collin Warner, Kirill
  Zubov, Rohit Supekar, Dominic Skinner, Ali Ramadhan, and Alan Edelman.
\newblock Universal differential equations for scientific machine learning,
  2021.

\bibitem{Raissi2018}
Maziar Raissi and George~Em Karniadakis.
\newblock Hidden physics models: Machine learning of nonlinear partial
  differential equations.
\newblock {\em Journal of Computational Physics}, 357:125--141, 2018.

\bibitem{raissi2018m}
Maziar Raissi, Paris Perdikaris, and George~Em Karniadakis.
\newblock Multistep neural networks for data-driven discovery of nonlinear
  dynamical systems, 2018.

\bibitem{raissi2018multistep}
Maziar Raissi, Paris Perdikaris, and George~Em Karniadakis.
\newblock Multistep neural networks for data-driven discovery of nonlinear
  dynamical systems.
\newblock {\em arXiv preprint arXiv:1801.01236}, 2018.

\bibitem{Reinbold2021}
Patrick A.~K. Reinbold, Logan~M. Kageorge, Michael~F. Schatz, and Roman~O.
  Grigoriev.
\newblock Robust learning from noisy, incomplete, high-dimensional experimental
  data via physically constrained symbolic regression.
\newblock {\em Nature Communications}, 12(1):3219, 2021.

\bibitem{Rubin1984}
Donald~B. Rubin.
\newblock {Bayesianly Justifiable and Relevant Frequency Calculations for the
  Applied Statistician}.
\newblock {\em The Annals of Statistics}, 12(4):1151 -- 1172, 1984.

\bibitem{Rudy2019}
Samuel~H. Rudy, J.~{Nathan Kutz}, and Steven~L. Brunton.
\newblock Deep learning of dynamics and signal-noise decomposition with
  time-stepping constraints.
\newblock {\em Journal of Computational Physics}, 396:483--506, 2019.

\bibitem{RUDY2019483}
Samuel~H. Rudy, J.~{Nathan Kutz}, and Steven~L. Brunton.
\newblock Deep learning of dynamics and signal-noise decomposition with
  time-stepping constraints.
\newblock {\em Journal of Computational Physics}, 396:483--506, 2019.

\bibitem{Russakovsky2015}
Olga Russakovsky, Jia Deng, Hao Su, Jonathan Krause, Sanjeev Satheesh, Sean Ma,
  Zhiheng Huang, Andrej Karpathy, Aditya Khosla, Michael Bernstein,
  Alexander~C. Berg, and Li~Fei-Fei.
\newblock Imagenet large scale visual recognition challenge.
\newblock {\em International Journal of Computer Vision}, 115(3):211--252,
  2015.

\bibitem{9003133}
Max Schüssler, Tobias Münker, and Oliver Nelles.
\newblock Deep recurrent neural networks for nonlinear system identification.
\newblock In {\em 2019 IEEE Symposium Series on Computational Intelligence
  (SSCI)}, pages 448--454, 2019.

\bibitem{SHARMA2023116402}
Harsh Sharma, Hongliang Mu, Patrick Buchfink, Rudy Geelen, Silke Glas, and
  Boris Kramer.
\newblock Symplectic model reduction of hamiltonian systems using data-driven
  quadratic manifolds.
\newblock {\em Computer Methods in Applied Mechanics and Engineering},
  417:116402, 2023.

\bibitem{SHARMA2024116865}
Harsh Sharma, David~A. Najera-Flores, Michael~D. Todd, and Boris Kramer.
\newblock Lagrangian operator inference enhanced with structure-preserving
  machine learning for nonintrusive model reduction of mechanical systems.
\newblock {\em Computer Methods in Applied Mechanics and Engineering},
  423:116865, 2024.

\bibitem{Suzuki1991}
Masuo Suzuki.
\newblock {General theory of fractal path integrals with applications to
  many‐body theories and statistical physics}.
\newblock {\em Journal of Mathematical Physics}, 32(2):400--407, 02 1991.

\bibitem{Tran2017}
Giang Tran and Rachel Ward.
\newblock Exact recovery of chaotic systems from highly corrupted data.
\newblock {\em Multiscale Modeling \& Simulation}, 15(3):1108--1129, 2017.

\bibitem{Verlet1967}
Loup Verlet.
\newblock Computer "experiments" on classical fluids. i. thermodynamical
  properties of lennard-jones molecules.
\newblock {\em Phys. Rev.}, 159:98--103, Jul 1967.

\bibitem{Yoshida1990}
Haruo Yoshida.
\newblock Construction of higher order symplectic integrators.
\newblock {\em Physics Letters A}, 150(5):262--268, 1990.

\bibitem{zhu2021inverse}
Aiqing Zhu, Pengzhan Jin, Beibei Zhu, and Yifa Tang.
\newblock Inverse modified differential equations for discovery of dynamics,
  2021.

\bibitem{Zhuang2020}
Juntang Zhuang, Nicha Dvornek, Xiaoxiao Li, Sekhar Tatikonda, Xenophon
  Papademetris, and James Duncan.
\newblock Adaptive checkpoint adjoint method for gradient estimation in neural
  {ODE}.
\newblock In Hal~Daumé III and Aarti Singh, editors, {\em Proceedings of the
  37th International Conference on Machine Learning}, volume 119 of {\em
  Proceedings of Machine Learning Research}, pages 11639--11649. PMLR, 13--18
  Jul 2020.

\bibitem{zhuang2021mali}
Juntang Zhuang, Nicha~C. Dvornek, Sekhar Tatikonda, and James~S. Duncan.
\newblock Mali: A memory efficient and reverse accurate integrator for neural
  odes, 2021.

\end{thebibliography}
\bibliographystyle{plain} 

\appendix
%\section{Appendix}
\section{Error analysis of ALF2} \label{sec:ALF2}
\textcolor{black}{\begin{example} \label{ex:ALF2} Consider a simple example of a differential equation on $\R$ given by
		\begin{equation} \label{eq:example}
			\dot z = z^2 + t + \sin(zt) + \frac{1}{z^2 + 1}.
		\end{equation}
		We solve the equation numerically using \textcolor{black}{the} ALF method and compare with a solution of high accuracy for different step sizes. The results are plotted in Figure~\ref{fig:ALF} and show the second order behaviour in both $(z,v)$ variables.
\end{example}}
\begin{figure}[h] 
	\centering
	\includegraphics[width=0.4\textwidth]{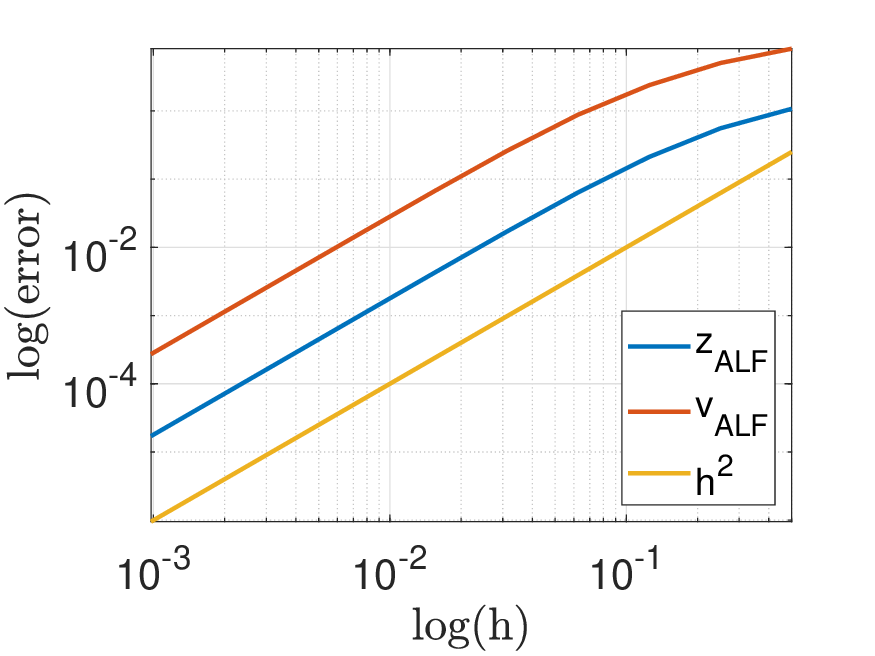}
	%\fbox{\rule[-.5cm]{0cm}{4cm} \rule[-.5cm]{4cm}{0cm}}
	\caption{Log-log plot of the global error of trajectories $(z(t),v(t))$ of \eqref{eq:example} defined on time interval $[0, 1.0]$ and obtained by ALF with $h$ ranging from $0.5$ to $10^{-3}$.}%\textcolor{black}{SO: longer caption such that figure is self explained without reading the text}}
\label{fig:ALF}
\end{figure}
\subsection{Proof of Theorem~\ref{th:ALF2}} 
\label{sec:appendix.proof}
%In this appendix we show the proof of Theorem~\ref{th:ALF2}. Namely, 
We show that the local error of ALF2 in $(z,v)$ is of order $O(h^3)$. Let us consider the Taylor expansion of the exact flow $(z(t), v(t))$ around $(z(t_0), v(t_0) = f(z_0,t_0))$.
$$z(t_0 + h) = z_0 + h f(z_0, t_0) + \frac{h^2}{2}\left( \frac{\partial f}{\partial z}(z_0, t_0) \circ f(z_0, t_0) + \frac{\partial f}{\partial t}(z_0, t_0) \right) + O(h^3),$$
\begin{multline*}
v(t_0 + h) =  f(z_0, t_0) + h \left( \frac{\partial f}{\partial z}(z_0, t_0) \circ f(z_0, t_0) + \frac{\partial f}{\partial t}(z_0, t_0)  \right) + \\
\frac{h^2}{2}( \frac{\partial^2 f}{\partial z^2}(z_0,t_0)(f(z_0,t_0), f(z_0,t_0)) +  \frac{\partial f}{\partial z}(z_0, t_0) \circ \frac{\partial f}{\partial z}(z_0, t_0) \circ f(z_0, t_0) + 2 \frac{\partial^2 f}{\partial z \partial t}(z_0,t_0)\circ f(z_0,t_0)  \\
+ \frac{\partial f}{\partial z}(z_0, t_0) \circ \frac{\partial f}{\partial t}(z_0, t_0) + \frac{\partial^2 f}{\partial t^2}(z_0,t_0) )  + O(h^3).
\end{multline*}
Now we consider the same for the numerical flow obtained with ALF2, that is composition of two steps of ALF each with the step size $\frac{h}{2}$. One step of ALF2 from $(z_0, v_0)$ leads to $(z_1, v_1)$ of the form
\begin{multline*}
z_1(h)  = z_0 + \frac{h}{2} ( f(z_0 + \frac{h}{4}f(z_0,t_0), t_0+ \frac{h}{4}) + f(z_0 + hf(z_0 + \frac{h}{4}f(z_0, t_0), t_0+ \frac{h}{4}) \\ - \frac{h}{4}f(z_0,t_0), t_0 + \frac{3h}{4})),
\end{multline*}
and
$$
v_1(h)  =  v_0 + 2(f(z_0 + hf(z_0 + \frac{h}{4}f(z_0,t_0), t_0+ \frac{h}{4}) - \frac{h}{4}v_0,t_0 + \frac{3h}{4} ) - f(z_0 + \frac{h}{4}f(z_0,t_0), t_0+ \frac{h}{4})).
$$
Writing down the Taylor expansion in $h$ for $(z_1(h), v_1(h))$ we find exactly the same terms as in $(z(t_0 +h), v(t_0 + h))$ up to terms of the third order $O(h^3)$. The computations to obtain the Taylor expansion of $(z_1,v_1)$ were done using Maple software. This implies that the local error of ALF2 is of the 3rd order, and therefore, the global error is of order 2. This \textcolor{black}{completes} the proof.

\section{Numerical method for the adjoint} \label{sec.adjoint}
The expression of \eqref{eq:adjoint.discrete} for $\phi_{k+1}$ obtained from $\phi_{k}$ by a step forward of ALF2 can be interpreted as a rescaled step backward of ALF2 applied to the state-adjoint dynamics. Let us introduce a map $W_h$ depending on $h$, which acts on $(z,v,\lambda^z, \lambda^v)$ as follows. It only transforms $\lambda^v$  multiplying it by $-\frac{h^2}{16}$, that is
$$ 
W_{\textcolor{black}{\alpha}} (z, v, \lambda^z, \lambda^v) = \begin{pmatrix}
\mathrm{Id} & 0 & 0 & 0 \\
0 & \mathrm{Id} & 0 & 0 \\
0 & 0 & \mathrm{Id} & 0 \\
0 & 0 & 0 & \textcolor{black}{\alpha}\mathrm{Id}
\end{pmatrix}\begin{pmatrix}
z \\
v \\
\lambda^z \\
\lambda^v
\end{pmatrix}.
$$
%First, let us determine the formula \eqref{eq:adjoint.discrete} for ALF method and from it we will obtain the formula for Yoshida composition methods of any order.
%Adjoint method leads to explicit formulas for gradient computation using the step backward of the considered numerical methods. For the method to provide exact gradients, it is important to obtain the right discrete adjoint equation \eqref{eq:adjiont}. To obtain the discretization method for the adjoint, we first obtain the formula for ALF method. And then show, what changes, when we used compositions of it. 
\begin{Theorem} \label{th:adjoint:ALF}
The step backward of the discretized state-adjoint system %with variables $(z,v, \lambda^z, \lambda^v)$ 
associated to the ALF2 method satisfies 
\begin{equation} \label{eq:step.back.adjoint}
(z_k, v_k, \lambda^z_k, \lambda^v_k) = W_{\textcolor{black}{\frac{-h^2}{16}}}^{-1} \circ \Psi^{ALF2}_{-h} \circ W_{\textcolor{black}{\frac{-h^2}{16}}}(z_{k+1}, v_{k+1}, \lambda^z_{k+1},\lambda^v_{k+1}), 
\end{equation}
where $\Psi^{ALF2}_{-h}$ is applied \chris{to} the state-adjoint equations of the augemented system for $\phi = (z,v)$
$$\dot  \phi(t) = \tilde f(\phi(t), \theta(t)), \quad \dot \lambda = - \frac{\partial}{\partial \phi} \tilde f(\phi(t), \theta(t))^\top \lambda, $$
with $\tilde f(\phi, \theta) = (f(z,\theta), \frac{\partial }{\partial z}f(z,\theta) f(z,\theta) )$.
\end{Theorem}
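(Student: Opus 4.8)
The plan is to verify the identity \eqref{eq:step.back.adjoint} as an exact equality of maps on $(z,v,\lambda^z,\lambda^v)$, rather than as an asymptotic statement, by comparing the two sides block by block. The starting point is that $\Psi^{ALF2}_h$ is an explicit, symmetric one-step map, so its inverse is exactly $\Psi^{ALF2}_{-h}$. By \eqref{eq:adjoint.discrete}, a single backward step of the discrete state-adjoint system consists of reconstructing the state via $\phi_k = \Psi^{ALF2}_{-h}(\phi_{k+1})$ together with the costate update $\lambda_k = (\partial\phi_{k+1}/\partial\phi_k)^\top \lambda_{k+1}$, where $\partial\phi_{k+1}/\partial\phi_k = D\Psi^{ALF2}_h$. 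Thus the left-hand side of \eqref{eq:step.back.adjoint} is completely determined by (i) the backward state map $\Psi^{ALF2}_{-h}$ acting on $(z,v)$ and (ii) the transpose of the Jacobian $D\Psi^{ALF2}_h$ acting on $(\lambda^z,\lambda^v)$. I would compute both directly from the two-fold composition that defines ALF2, using the explicit half-step \eqref{eq:ALF.forward}.

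I would dispose of the $(z,v)$-block first. Since $\tilde f(\phi,\theta)$ is independent of $\lambda$, the state component of the augmented state-adjoint ODE decouples and reduces to $\dot\phi = \tilde f(\phi,\theta)$; because ALF is explicit and evaluates the combined right-hand side, this decoupling is preserved at the discrete level. As $W_\alpha$ fixes $(z,v)$ by construction, both sides of \eqref{eq:step.back.adjoint} restrict on the $(z,v)$-block to $\Psi^{ALF2}_{-h}$ applied to $(z,v)$, so this block is immediate.

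The substance is the $\lambda$-block. Here I would write out the Jacobian of one ALF half-step, carefully tracking the midpoint argument $z_j+\tfrac{h}{4}v_j$ and the doubling in $v_{j+1}=2f(\cdot)-v_j$, then form the Jacobian of the composition ALF2 and take its transpose to obtain the discrete-adjoint propagator for $(\lambda^z,\lambda^v)$. On the other side, I would expand $W_\alpha^{-1}\circ\Psi^{ALF2}_{-h}\circ W_\alpha$ on the adjoint variables, where $\Psi^{ALF2}_{-h}$ now advances the adjoint equation $\dot\lambda = -(\partial_\phi \tilde f)^\top\lambda$ backward. A key intermediate step is to identify the position–velocity splitting that ALF2 reads off the combined four-block system: the role of $W_\alpha$ is to place $\lambda^v$ into the units in which ALF treats a velocity-type variable. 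The rescaling constant $\alpha=-\tfrac{h^2}{16}$ should then emerge precisely as the factor reconciling ALF's velocity normalization with the cotangent scaling, each sub-step contributing a midpoint factor $\tfrac{h}{4}$ whose square $(\tfrac{h}{4})^2=\tfrac{h^2}{16}$ is inherited by the dual velocity $\lambda^v$, with the sign fixed by the backward orientation.

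I expect the main obstacle to be the bookkeeping in this last step: differentiating a composition of two ALF steps yields a product of Jacobians whose transpose must be matched term by term against the conjugated backward flow, and pinning down $\alpha$ requires not losing any of the $\tfrac{h}{4}$ midpoint factors. As in the proof of Theorem~\ref{th:ALF2}, a symbolic computation (for instance in Maple) that compares the two maps and certifies termwise equality as exact functions of $h$ is the natural way to close the argument; the conceptual content, namely that the discrete adjoint of the symmetric method ALF2 is again ALF2 applied to the adjoint system up to the fixed linear rescaling $W_\alpha$, then follows.
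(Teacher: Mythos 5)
Your proposal is correct and follows essentially the same route as the paper: the paper likewise identifies the discrete adjoint step with the transposed Jacobian of the forward map, computes that Jacobian explicitly from \eqref{eq:ALF.forward}, and recognizes its transpose---after rescaling $\lambda^v$ by $-4/h^2$ (hence the factor $-h^2/16$ per half-step) and using the midpoint identity $z_k + \tfrac{h}{2}v_k = z_{k+1} - \tfrac{h}{2}v_{k+1}$---as the conjugated backward ALF step applied to the adjoint ODE. The only organizational difference is that the paper establishes the conjugation identity for a single ALF step by hand and then lifts it to ALF2 via the chain rule (the inner rescaling maps telescope), whereas you propose verifying ALF2 directly by symbolic termwise comparison; the paper's modular single-step lemma is also what gets reused in the induction proving Theorem~\ref{th:composition.grad}.
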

\begin{proof}
In order to find the expression for ALF2, we first determine the expression for ALF and use the chain rule. Let us  compute $\frac{\partial \phi_{k+1}}{\partial \phi_k}$ for ALF method, where $\phi_{k+1} = (z_{k+1}, v_{k+1})$ and $\phi_{k} = (z_{k}, v_{k})$. Differentiating \eqref{eq:ALF.forward} with respect to $(z_k,v_k)$, we obtain
\begin{equation*}
\frac{\partial \phi_{k+1}}{\partial \phi_k} =
\begin{pmatrix}
	\mathrm{Id} + h\frac{\partial f}{\partial z}(z_{k} + \frac{h}{2}v_{k}, t_k + \frac{h}{2}) & \frac{h^2}{2 }\frac{\partial f}{\partial z}(z_{k} + \frac{h}{2}v_{k}, t_k + \frac{h}{2}) \\[0.3cm]
	2 \frac{\partial f}{\partial z}(z_{k} + \frac{h}{2}v_{k}, t_k + \frac{h}{2}) & h\frac{\partial f}{\partial z}(z_{k} + \frac{h}{2}v_{k}, t_k + \frac{h}{2}) - \mathrm{Id}
\end{pmatrix}.
\end{equation*}
This implies 
\begin{equation} \label{eq:adjoint.discrete.ALF}
\begin{aligned}
	\lambda^z_k &= \left( \mathrm{Id} + h \frac{\partial}{\partial z}f(z^{k} + \frac{h}{2}v^{k}, t^k + \frac{h}{2}) \right) \lambda^z_{k+1} + 2  \frac{\partial}{\partial z}f(z^{k} + \frac{h}{2}v^{k}, t^k + \frac{h}{2}) \lambda^v_{k+1}, \\
	\lambda^v_k &= \frac{h^2}{2} \frac{\partial}{\partial z}f(z^{k} + \frac{h}{2}v^{k}, t^k + \frac{h}{2}) \lambda^z_{k+1} + \left( h \frac{\partial}{\partial z}f(z^{k} + \frac{h}{2}v^{k}, t^k + \frac{h}{2}) - \mathrm{Id} \right) \lambda^v_{k+1} .
\end{aligned}
\end{equation}
Notice that \eqref{eq:adjoint.discrete.ALF} can be equivalently written as
\begin{equation} 
\begin{aligned}
	\lambda^z_k &= \lambda^z_{k+1} + h  \frac{\partial}{\partial z}f(z^{k} + \frac{h}{2}v^{k}, t^k + \frac{h}{2}) \left(\lambda^z_{k+1} + \frac{2}{h}\lambda^v_{k+1} \right), \\
	\lambda^v_k &= -2 \frac{\partial}{\partial z}f(z^{k}+ \frac{h}{2}v^{k}, t^k + \frac{h}{2}) \left(-\frac{h^2}{4}\lambda^z_{k+1} - \frac{h}{2}\lambda^v_{k+1}\right)  -\lambda^v_{k+1} .
\end{aligned}
\end{equation}
Let us now introduce $\tilde{\lambda}^v_k = -\frac{4}{h^2}\lambda^v_{k}$. Then equations take the following form
\begin{equation}  \label{eq.adjoint.ALF}
\begin{aligned}
	\lambda^z_k &= \lambda^z_{k+1} + h  \frac{\partial}{\partial z}f(z^{k} + \frac{h}{2}v^{k}, t^k + \frac{h}{2}) \left(\lambda^z_{k+1} - \frac{h}{2}\tilde{\lambda}^v_{k+1} \right), \\
	\tilde{\lambda}^v_k &= -2 \frac{\partial}{\partial z}f(z^{k} + \frac{h}{2}v^{k}, t^k + \frac{h}{2}) \left(\lambda^z_{k+1} - \frac{h}{2}\tilde{\lambda}^v_{k+1}\right)  -\tilde{\lambda}^v_{k+1} .
\end{aligned}
\end{equation}
Taking into account that $z_{k} + \frac{h}{2}v_{k} = z_{k+1} - \frac{h}{2}v_{k+1}$ from the construction of \eqref{eq:ALF.forward}-\eqref{eq:ALF.backward}, we conclude that variables $(\lambda^z_k, -\frac{4}{h^2}\lambda^v_{k})$ follow the backward integration with ALF method and its step backward defined by \eqref{eq:ALF.backward} applied to the continuous equations of the adjoint \eqref{eq:adjiont}. As a result, the step backward of the adjoint variables $\lambda$ can be expressed as
\begin{equation*}
(\lambda^z_k, \lambda^v_k) = \widehat W_{\textcolor{black}{\frac{-h^2}{4}}}^{-1} \circ \widehat \Psi^{ALF}_{-h}(z_{k+1},v_{k+1})\circ \widehat W_{\textcolor{black}{\frac{-h^2}{4}}} (\lambda^z_{k+1}, \lambda^v_{k+1}),
\end{equation*}
\textcolor{black}{where $\widehat W_\alpha$ is a projection of $W_\alpha$ to variables $(\lambda^z_k, \lambda^v_k)$ and $\widehat \Psi^{ALF}_{-h}(z_{k+1},v_{k+1})$ } stands for a projection of the backward ALF step to $(\lambda^z, \lambda^v)$, which is still a function of $(z_{k+1},v_{k+1})$.
To deduce the formula for \chris{the} ALF2 method, we use its composition structure, namely, $\Psi^{ALF2}_h = \Psi^{ALF}_{h/2} \circ \Psi^{ALF}_{h/2}$. This implies
\begin{equation*}
\frac{\partial \phi_{k+1}}{\partial \phi_k} 
=  \frac{\partial }{\partial \phi_k} \left( \Psi^{ALF}_{h/2} \circ \Psi^{ALF}_{h/2} \right) =  \left( \frac{\partial \Psi^{ALF2}_{h/2} }{ \partial \phi}(\phi_{k+\frac 12}) \right)  \circ\left( \frac{\partial \Psi^{ALF2}_{h/2} }{ \partial \phi}( \phi_{k}) \right) 
\end{equation*}
with
$$
\phi_{k+\frac 12} = \Psi^{ALF2}_{h/2}(\phi_{k}) = \Psi^{ALF2}_{-h/2}(\phi_{k+1}).
$$
As a result,

\begin{align*}
\left( \frac{\partial \phi_{k+1}}{\partial \phi_k} \right)^\top
&=  \left( \frac{\partial \Psi^{ALF2}_{h/2} }{ \partial \phi}( \phi_{k}) \right)^\top \circ \left( \frac{\partial \Psi^{ALF2}_{h/2} }{ \partial \phi}(\phi_{k+\frac 12}) \right)^\top   \\
&= \textcolor{black}{\widehat W_{\frac{-h^2}{16}}^{-1}\circ \widehat \Psi^{ALF}_{-h/2}(\phi_{k+\frac 12})\circ\widehat W_{\frac{-h^2}{16}}\circ\widehat W_{\frac{-h^2}{16}}^{-1}\circ\widehat \Psi^{ALF}_{-h/2}(\phi_{k+1})\circ\widehat W_{\frac{-h^2}{16}}} \\
&= \textcolor{black}{\widehat W_{\frac{-h^2}{16}}^{-1}\circ\widehat \Psi^{ALF}_{-h/2}(\Psi^{ALF2}_{-h/2}(\phi_{k+1}))\circ \widehat\Psi^{ALF}_{-h/2}(\phi_{k+1})\circ\widehat W_{\frac{-h^2}{16}}} \\
&= \textcolor{black}{\widehat W_{\frac{-h^2}{16}}^{-1}\circ\widehat\Psi^{ALF2}_{-h}(z_{k+1},v_{k+1})\circ\widehat W_{\frac{-h^2}{16}}.}
\end{align*}  
%The map $\widetilde W_{h/2}$ rescales $\lambda^v$ to $-\frac{h^2}{16}\lambda^v$. We introduce $W_h$ for the transformation of state-adjoint variables, which leaves all variables unchanged except $\lambda^v$, which is mapped to $-\frac{h^2}{16}\lambda^v$. 
The resulting equations for the backward step of the state-adjoint system are given in \eqref{eq:step.back.adjoint}. This \textcolor{black}{completes} the proof of Theorem~\ref{th:adjoint:ALF}.
\end{proof}

The formula for the adjoint of Yoshida methods $\Phi^Y_{2k}$ follows from the composition structure of the method and is presented the following theorem. \textcolor{black}{With a slight abuse of notation we denote $\Phi^{ALF2}$ by $\Phi^Y_{2}$.}
\begin{Theorem} \label{th:composition.grad}
Assume that the discrete one step method in \eqref{eq:neural.ODE} is given \textcolor{black}{for $k \geq 2$} by
\begin{equation} \label{eq:composition}
\Psi^Y_{2k}(h) = \Psi^{Y}_{2k-2}(ah) \circ \Psi^{Y}_{2k-2}(bh) \circ \Psi^{Y}_{2k-2}(ah), \qquad \phi_{k+1} = \Psi^Y_{2k}(h) \circ \phi_k.
\end{equation}
%
%such that $z_{k+1} = \Psi(z_k)$, 
Then the state-adjoint backward step can be computed recursively as follows 
\begin{equation} \label{eq.adjoint.Yoshida}
(\phi_k, \lambda_k) = \widetilde \Psi^{Y}_{2k-2}(ah) \circ \widetilde \Psi^{Y}_{2k-2}(bh) \circ \widetilde \Psi^{Y}_{2k-2}(ah)(\phi_{\textcolor{black}{k+1}}, \lambda_{\textcolor{black}{k+1}}), 
\end{equation}
with $\widetilde \Psi^{Y}_{2k-2}$ the map, which defines the backward step of state-adjoint system of the method $\Phi^Y_{2k-2}$.
%is a composition method of the following form
%$$ \lambda = \lambda^{\alpha_1 h }\circ \lambda^{\alpha_2 h } \circ \cdots \circ \lambda^{\alpha_K h },$$
%which means that $\lambda_k$ is defined from $\lambda_{k+1}$ by 
%\begin{equation} \label{eq.adjoint.Yoshida}
%\begin{aligned}
%\lambda^{1}_{k} & = \left( \frac{\partial \Psi^{Y}_{2k-2}(ah) }{ \partial \phi}(z_{k+1}) \right)^\top \lambda_{k+1}, \\
%\lambda^{2}_{k} &= \left( \frac{\partial \Psi^{Y}_{2k-2}(bh) }{ \partial \phi}((\Psi^{Y}_{2k-2}(ah))^{-1}\circ z_{k+1} ) \right)^\top \lambda^{1}_{k}, \\
%\lambda^{2}_{k} &= \left( \frac{\partial \Psi^{Y}_{2k-2}(bh) }{ \partial \phi}((\Psi^{Y}_{2k-2}(ah))^{-1}(z_{k+1}) ) \right)^\top \lambda^{\alpha_i h }_{k}, \\
%\lambda_{k} &=  \left( \frac{\partial \Psi^{Y}_{2k-2}(ah) }{ \partial \phi}( (\Psi^{Y}_{2k-2}(bh))^{-1}\circ (\Psi^{Y}_{2k-2}(ah))^{-1}\circ z_{k+1}) \right)^\top \lambda^{2}_{k}, 
%\end{aligned}
%\end{equation}
%where each of the equation is the adjoint equation of $\Phi^Y_{2k-2}$ and its expression can be obtained using the composition form \eqref{eq:composition}. 
\end{Theorem}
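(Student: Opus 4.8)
The plan is to argue by induction on $k$, using the composition structure \eqref{eq:composition} together with the chain rule; the base case is handed to us by Theorem~\ref{th:adjoint:ALF}. Concretely, for $k=2$ we have $\Psi^Y_2 = \Psi^{ALF2}$ by the stated convention $\Phi^{ALF2}=\Phi^Y_2$, and Theorem~\ref{th:adjoint:ALF} shows that the backward step of the discretized state-adjoint system for ALF2 is the conjugated map $W_{-h^2/16}^{-1}\circ \Psi^{ALF2}_{-h}\circ W_{-h^2/16}$. I would take this map to be the definition of $\widetilde\Psi^Y_2$, so that the base case holds by construction.

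For the inductive step, assume the statement holds at order $2k-2$: the map $\widetilde\Psi^Y_{2k-2}(h)$ acts on pairs $(\phi,\lambda)$ and simultaneously (i) recovers the previous state by one backward step of $\Psi^Y_{2k-2}$ (using its reversibility) and (ii) left-multiplies the adjoint by the transposed forward Jacobian $\bigl(\partial\Psi^Y_{2k-2}(h)/\partial\phi\bigr)^\top$ evaluated at the recovered state, in accordance with the discrete adjoint recursion \eqref{eq:adjoint.discrete}. First I would introduce the two intermediate states $\psi_1 = \Psi^Y_{2k-2}(ah)(\phi_k)$ and $\psi_2 = \Psi^Y_{2k-2}(bh)(\psi_1)$, so that $\phi_{k+1} = \Psi^Y_{2k-2}(ah)(\psi_2)$. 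Differentiating \eqref{eq:composition} by the chain rule gives the factorization
\begin{equation*}
\frac{\partial\phi_{k+1}}{\partial\phi_k}
= \frac{\partial\Psi^Y_{2k-2}(ah)}{\partial\phi}(\psi_2)\,
  \frac{\partial\Psi^Y_{2k-2}(bh)}{\partial\phi}(\psi_1)\,
  \frac{\partial\Psi^Y_{2k-2}(ah)}{\partial\phi}(\phi_k),
\end{equation*}
and transposing reverses the order of the three factors. Since reversibility yields $\psi_2 = \Psi^Y_{2k-2}(-ah)(\phi_{k+1})$, $\psi_1 = \Psi^Y_{2k-2}(-bh)(\psi_2)$ and $\phi_k = \Psi^Y_{2k-2}(-ah)(\psi_1)$, the states at which the three transposed factors must be anchored are exactly those produced by successive backward substeps starting from $\phi_{k+1}$. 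By the induction hypothesis each ``recover-the-state-and-transpose-multiply-the-adjoint'' operation is a single application of $\widetilde\Psi^Y_{2k-2}$ with the appropriate step size, and applying the three transposed factors to $\lambda_{k+1}$ from right to left therefore assembles into
\begin{equation*}
(\phi_k,\lambda_k)
= \widetilde\Psi^Y_{2k-2}(ah)\circ\widetilde\Psi^Y_{2k-2}(bh)\circ\widetilde\Psi^Y_{2k-2}(ah)(\phi_{k+1},\lambda_{k+1}),
\end{equation*}
which is exactly \eqref{eq.adjoint.Yoshida}. Note that the palindromic step-size pattern $(ah,bh,ah)$ is invariant under the reversal, so no reindexing of the coefficients is needed.

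I expect the main obstacle to be the bookkeeping that fuses the two operations --- backward recovery of the intermediate states and transposed multiplication of the adjoint --- into the single map $\widetilde\Psi^Y_{2k-2}$, and that guarantees each factor is evaluated at a consistent state. The crux is to verify that the output pair of one $\widetilde\Psi^Y_{2k-2}$ step supplies the next step with precisely the state it needs to evaluate its transposed Jacobian; this is where reversibility of $\Psi^Y_{2k-2}$ and the exactness of the discrete adjoint recursion must be invoked carefully. Everything else is the routine chain rule and transposition, with the base-case conjugation structure already established in Theorem~\ref{th:adjoint:ALF}.
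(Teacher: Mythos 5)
Your proof is correct and follows essentially the same route as the paper's: induction on the order, chain-rule factorization of the transposed Jacobian (with transposition reversing the order of the three factors), and reversibility to identify the intermediate states at which each transposed factor is evaluated, with Theorem~\ref{th:adjoint:ALF} supplying the ALF2-level backward state-adjoint step. The only cosmetic difference is that you bottom out the induction at $\Psi^Y_2=\Psi^{ALF2}$, taking Theorem~\ref{th:adjoint:ALF} as the definition of $\widetilde\Psi^Y_2$, whereas the paper works out the $k=2$ case $\Psi^Y_4=\Psi^{ALF2}_{ah}\circ\Psi^{ALF2}_{bh}\circ\Psi^{ALF2}_{ah}$ explicitly as its base case before running the identical inductive step.
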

\begin{proof}
The proof is by induction on $k$ in the considered method $\Psi^Y_{2k}$ and is based on the composition structure of $\Psi^Y_{2k}$ in \eqref{eq:composition}. Let $k=2$, then 
$\Psi^Y_{4}(h) = \Psi^{ALF2}_{ah} \circ \Psi^{ALF2}_{bh} \circ \Psi^{ALF2}_{ah} $.
%For $s=1$ the statement is trivial. Assume that it holds for $S = s_0$, that is. Consider now a composition of $s_0 +1$ elements
%\begin{equation*}
%   \Psi = \underbrace{\Psi_1(\alpha_1h)\circ \cdots\circ\Psi_1(\alpha_{s_0}h)}_{\normalfont \bar \Psi }\circ\Psi_1(\alpha_{s_0+1}h).
%\end{equation*}
%The corresponding step forward can be written as
%$$
%\phi_{k+1} = \bar \Psi \circ \Psi_1(\alpha_{s_0+1}h) \circ \phi_k.
%$$
%
By \chris{the} chain rule we have
\begin{equation*} %\label{eq:adjoint.yoshida.ALF2}
\left(\frac{\partial \phi_{k+1}}{\partial \phi_k} \right)^\top
= \left( \frac{\partial \Psi^{ALF2}_{ah} }{ \partial \phi}( \phi_{k+\frac 13} ) \right)^\top \circ \left( \frac{\partial \Psi^{ALF2}_{bh} }{ \partial \phi}( \phi_{k+\frac 23}) \right)^\top \circ \left( \frac{\partial \Psi^{ALF2}_{ah} }{ \partial \phi}(\phi_{k+1}) \right)^\top
\end{equation*}
with
$$
\begin{aligned}
\phi_{k+\frac 23} &= \Psi^{ALF2}_{-ah}(\phi_{k+1}), \\
\phi_{k+\frac 13} &= \Psi^{ALF2}_{-bh} \circ \Psi^{ALF2}_{-ah}(\phi_{k+1}).
\end{aligned}
$$
By construction of the backward step of the state adjoint system by ALF2 shown in \eqref{eq:step.back.adjoint}, we have
\begin{multline*}
\left(\frac{\partial \phi_{k+1}}{\partial \phi_k} \right)^\top 
= Pr_{\lambda}\left( W_{ah}^{-1}\circ\Psi^{ALF2}_{-ah}\circ W_{ah}\right) \circ Pr_{\lambda}\left(W_{bh}^{-1}\circ\Psi^{ALF2}_{-bh}\circ W_{bh}\right)\circ \\
\circ Pr_{\lambda}\left( W_{ah}^{-1}\circ\Psi^{ALF2}_{-ah}\circ W_{ah}\right),
\end{multline*}
where $\textcolor{black}{\widehat \Psi^{ALF2}_{h_i}} = Pr_{\lambda}\left( W_{h_i}^{-1}\circ\Psi^{ALF2}_{-h_i}\circ W_{h_i}\right)$, $h_i \in \{ ah, bh \}$ defines a step backward \chris{with the} ALF2 method with step-size $h_i$ in the adjoint variable. This proves the Theorem for $k=2$. Let us assume now that the statement of the theorem holds for $k = k_0$ and we consider the adjoint method for $\Psi^Y_{2k_0}(h) = \Psi^{Y}_{2k_0-2}(ah) \circ \Psi^{Y}_{2k_0-2}(bh) \circ \Psi^{Y}_{2k_0-2}(ah)$. As before, applying the chain rule and the assumption of the induction, it follows that 
\begin{align*} %\label{eq:adjoint.yoshida.ALF2}
\left(\frac{\partial \phi_{k+1}}{\partial \phi_k} \right)^\top
&= \left( \frac{\partial \Psi^{Y}_{2k_0-2}(ah) }{ \partial \phi}( \tilde \phi_{k+\frac 13} ) \right)^\top \circ \left( \frac{\partial \Psi^{Y}_{2k_0-2}(bh) }{ \partial \phi}(\tilde \phi_{k+\frac 23}) \right)^\top \circ \left( \frac{\partial \Psi^{Y}_{2k_0-2}(ah) }{ \partial \phi}(\phi_{k+1}) \right)^\top \\
&= \textcolor{black}{\widehat{\widetilde \Psi^{Y}}_{2k_0-2}(ah) \circ \widehat{\widetilde \Psi^{Y}}_{2k_0-2}(bh) \circ \widehat{\widetilde \Psi^{Y}}_{2k_0-2}(ah),}
\end{align*}
where we used the notation
$$
\textcolor{black}{ \tilde \phi_{k+\frac 23} = \Psi^{Y}_{2k_0-2}(-ah)(\phi_{k+1}), \quad \tilde \phi_{k+\frac 13} = \Psi^{Y}_{2k_0-2}(-bh) \circ \Psi^{Y}_{2k_0-2}(-ah)(\phi_{k+1}),}
$$
and \textcolor{black}{$\widehat{\widetilde \Psi^{Y}}_{2k_0-2}$} the projection of the step backward associated to the state-adjoint system and $\Psi^{Y}_{2k_0-2}$ method. This \textcolor{black}{completes} the induction step and the proof.
\end{proof}
\textcolor{black}{In case of $k=2$, Theorem \ref{th:composition.grad} in combination with \eqref{eq:step.back.adjoint} leads to the following expression 
$$ 
\widetilde \Psi^{Y}_{4} = W_{\frac{-(ah)^2}{16}}^{-1} \circ \Psi^{ALF2}_{-ah} \circ W_{\frac{a^2}{b^2}} \circ \Psi^{ALF2}_{-bh} \circ W_{\frac{b^2}{a^2}} \circ \Psi^{ALF2}_{-ah} \circ W_{\frac{-(ah)^2}{16}}. 
$$
}
%Notice that all the components in the composition $\Psi^Y_{2k}(h)$ are reversible and, therefore,  $(\Psi^Y_{2k}(h))^{-1}$ can be computed by step backward as described in Algorithm~\ref{alg:step.backward}. 
The obtained results lead to the Algorithm~\ref{alg:reconstruction2} for the computation of gradients.

%\textcolor{black}{SO: proofs in appendix?} \sofya{Yes, done}
\begin{algorithm}
\caption{Computation of gradients}\label{alg:reconstruction2}
%\FJi{Est-ce que \c{c}a a du sens de mettre cet algo ici? Je ne vois pas commnt on peut faire l'\'{e}tape 1 algorithmiquement. Ce serait mieux il me semble de dire qu'on se place sur $\mathcal{J} \setminus \mathcal{J}_p$ puisqu'on montrera que c'est dense.}
\begin{algorithmic}
\State 1. Input: training data $z_0$, initialization of parameters $\theta$, velocity $v_0 = f(z_0,\theta_0)$
\State 2. Propagate through the network using $\Psi^Y_{2k}$ to get $(z_N, v_N)$
\State 3. Set $\lambda^z_N = \nabla L(z_N)$ and $\lambda^v_N = 0$
\For{\texttt{k = N to 1}}
\State 4. Compute $\phi_k, \lambda_k$ from $\phi_{k+1}, \lambda_{k+1}$ using \textcolor{black}{\eqref{eq:step.back.adjoint} and \eqref{eq.adjoint.Yoshida}.}%Theorem~\ref{th:composition.grad}.
\State 5. Compute $\frac{\partial J(\theta)}{\partial \theta_{k}}$ using \textcolor{black}{\eqref{eq:adjoint.grad.discrete}}
\EndFor

\State 6. Output: gradients  $\frac{\partial J(\theta)}{\partial \theta_{k}}$ for $k = 1, \dots N-1$. 
%	\EndIf
\end{algorithmic}
\end{algorithm}

\section{Details of numerical experiments} 
In all the numerical experiments, our implementation of the  Yoshida composition method uses the code of the MALI network \cite{zhuang2021mali}. We use the steps forward and backward of the ALF method as composition steps to compute ALF2 and its Yoshida composition.  
\subsection{Kepler problem} \label{sec:Kepler}
The training data for the comparison of the computational time in Table~\ref{table:Kepler_time} is given by a trajectory $x$ of \eqref{eq:Kepler} with initial condition $x_0 = (0.75,  0, 0, \frac{0.9 \pi}{4}\sqrt{\frac{5}{3}})$ on time interval $[0,T] = [0,1]$, which is an elliptic orbit. The trajectory is obtained by numerical integration using \texttt{sci.integrate.odeint} with relative and absolute tolerances $10^{-7}$ and $10^{-8}$ respectively and maximum step size $10^{-5}$. %The trajectory is then used in the loss function $L =   \|x_\theta - x(T, x_0)\|^2$. 
The optimizer used in the training is \texttt{SGD} from \texttt{PyTorch} with initial learning rate 0.1 and scaled by 0.95 for each epoch. \textcolor{black}{For completeness, we show the evaluation of the parameter error across the learning displayed as a function of time in Figure~\ref{fig:Kepler.error.vs.time} and as a function of epochs in Figure~\ref{fig:Kepler.error.vs.epochs}. In the plots we show the results obtained with ALF, Y4 and also Runge-Kutta 4(5) (RK45), the latter is not a reversible method and requires storage of the intermediate states obtained during the integration forward. This implies additional memory consumption, namely, at each epoch the algorithm saves 8 additional states obtained during integration forward, making the memory consumption of the training higher. The four plots in Figures~\ref{fig:Kepler.error.vs.epochs} and \ref{fig:Kepler.error.vs.time} are obtained for different initializations of the parameters $\alpha_0$ in the learning, namely, $\alpha_0 = 1.3, 0.1, 0.7, 0.75$.}
\begin{figure}[h]
\centering
\includegraphics[width=0.35\textwidth]{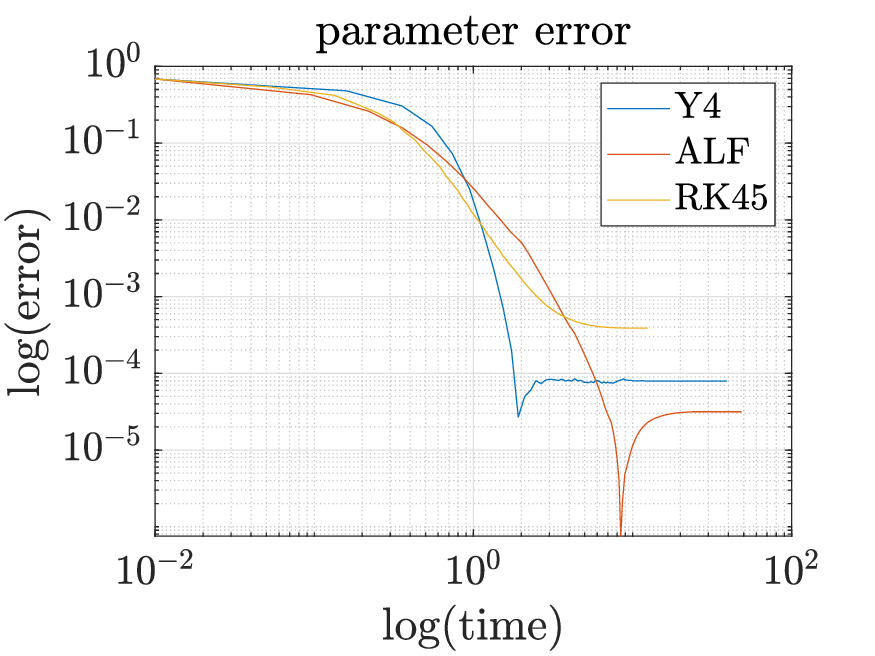} \ \ \ 
\includegraphics[width=0.35\textwidth]{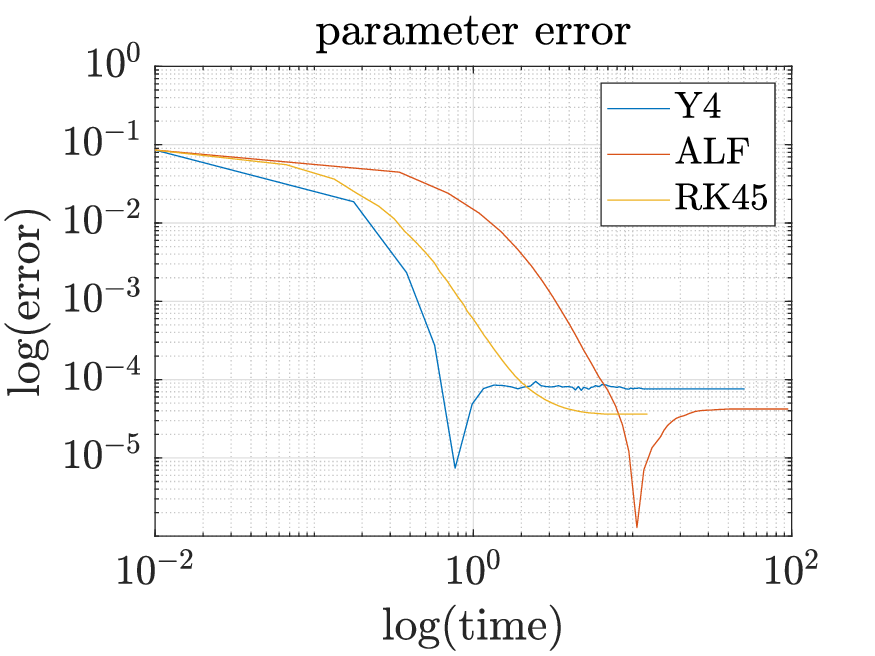} \\
\includegraphics[width=0.35\textwidth]{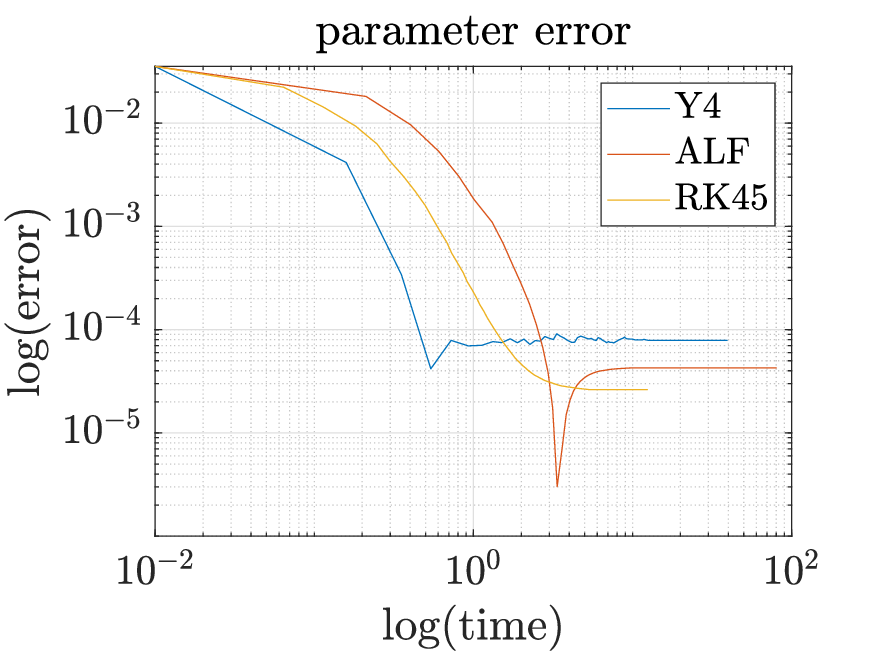} \ \ \ 
\includegraphics[width=0.35\textwidth]{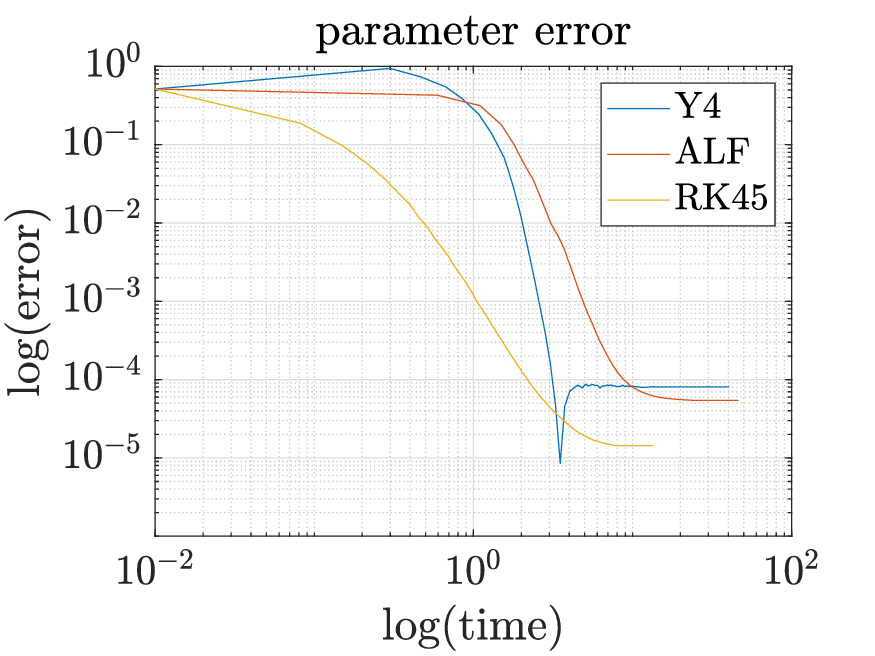}
\caption{Error of the learned parameter with respect to the ground truth $\alpha$ as a function of time.} 
\label{fig:Kepler.error.vs.time}
%\fbox{\rule[-.5cm]{0cm}{4cm} \rule[-.5cm]{4cm}{0cm}}
\end{figure}

\begin{figure}[h]
\centering
\includegraphics[width=0.35\textwidth]{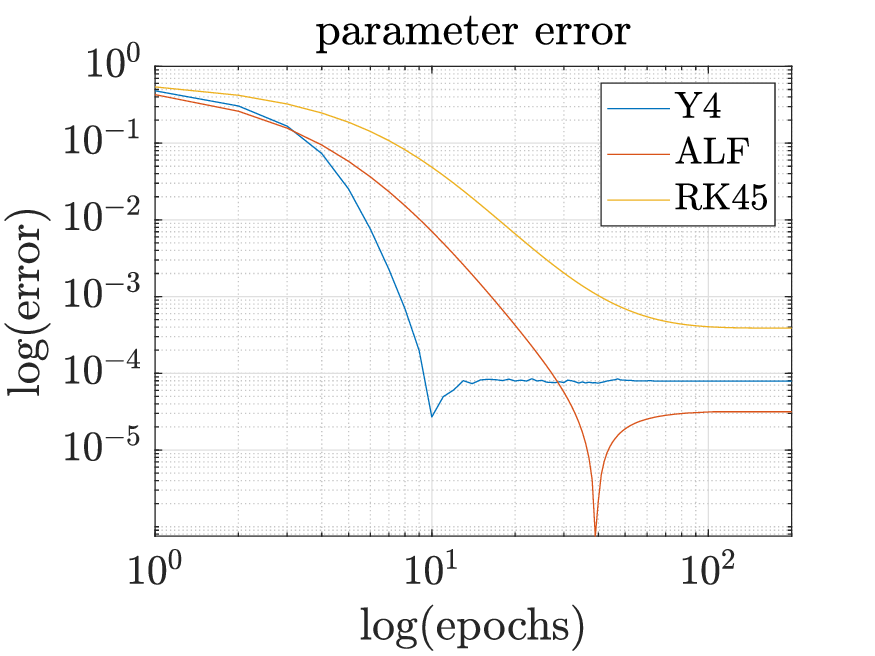} \ \ \ 
\includegraphics[width=0.35\textwidth]{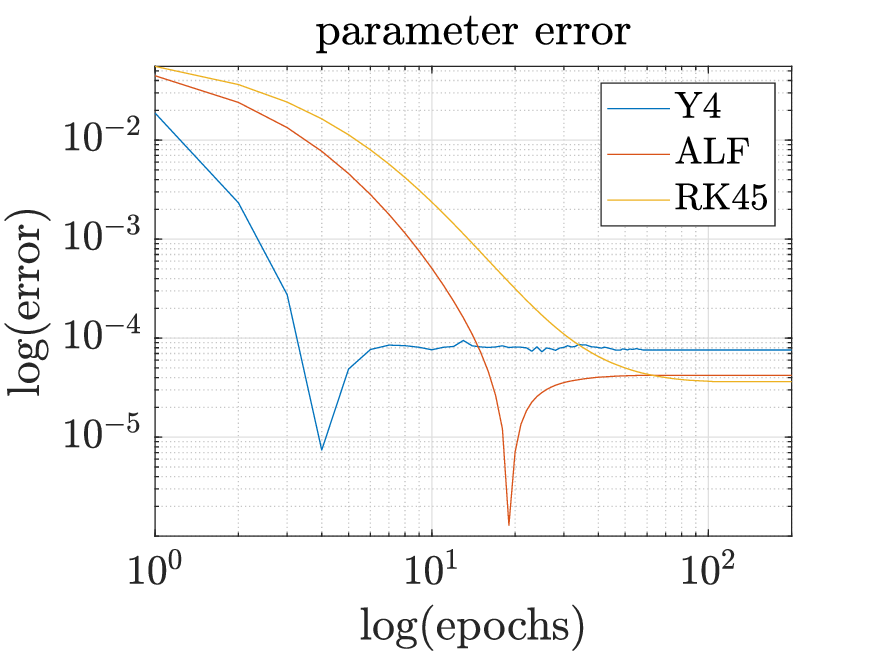} \\
\includegraphics[width=0.35\textwidth]{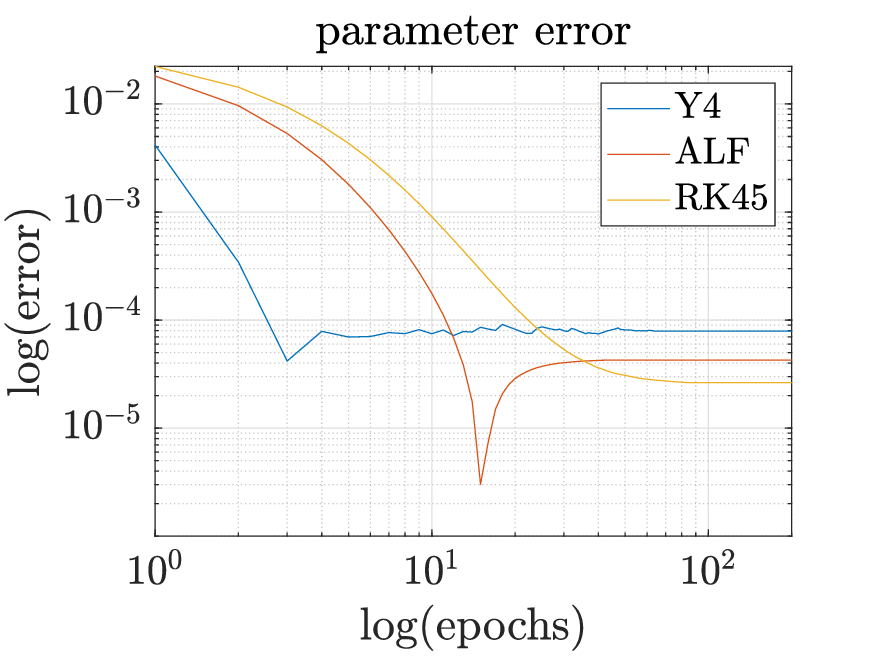} \ \ \ 
\includegraphics[width=0.35\textwidth]{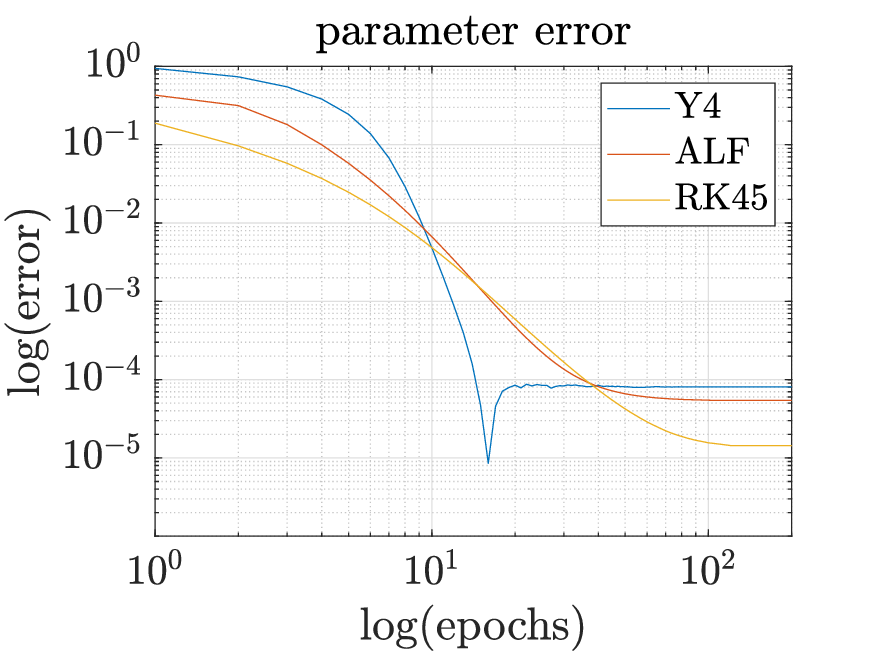}
\caption{Error of the learned parameter with respect to the ground truth $\alpha$ as a function of epochs.} 
\label{fig:Kepler.error.vs.epochs}
%\fbox{\rule[-.5cm]{0cm}{4cm} \rule[-.5cm]{4cm}{0cm}}
\end{figure}

The error landscape in Figure~\ref{fig:Error.landscape} is obtained by considering 81 trajectories obtained using the same integration method as for the time comparison explained above for 81 different initial conditions $(x_0)_i$ in a neighborhood of $x_0$, given by a 4-dimensional box of diameter 0.4 around $x_0$. The points $(x_0)_i$ are chosen on a grid with a step size $0.1$, which includes $x_0$ as its point. %\chris{[Are they sampled or obtained from a Halton sequence?]}. 
\textcolor{black}{The behaviour of ALF and Y4 with adaptive stepping can be better understood when looking at fixed step methods, when the step size $h_i = h$ is fixed for all the steps. The loss landscape visualized in Figure~\ref{fig:Error.landscape} for fixed step ALF and fixed step Y4 shows that the minimum value of the loss is achieved at a better precision of the true parameter for the higher order methods than for the lower order method,
\textcolor{black}{which will be explained in more detail below.}
%This phenomenon explains, that for a higher accuracy, the lower order method has to choose smaller step sizes, which implies that a large number of steps is needed and this makes the integration time larger in comparison with a higher order method. This can be an explanation, why Y4 is faster in both epoch computations and also to get to the desired accuracy in the training.
}
The loss visualized in Figure~\ref{fig:Error.landscape} as a function of $\alpha$ is \[L(\alpha_k) =  \frac{1}{81} \sum_{i=1}^{81} \sum_{j=1}^{5} \sum  \|(q_{N_j}({\alpha_k}))_i - q(t_j, (x_0)_i)\|^2\] with $q_{N_j}$ projection of $z_{N_j}$ to $q$-coordinate and $\alpha_k$ taking 300 values in  $[\frac{\pi}{4} - 10^{-4}, \frac{\pi}{4} + 10^{-4}]$.
\textcolor{black}{Here $(q_{N_j}(\alpha))_i$ is obtained by numerical integration of \eqref{eq:Kepler}.}

\begin{figure}[h] 
%\vspace{-2mm}
\centering
\includegraphics[width=0.7\textwidth]{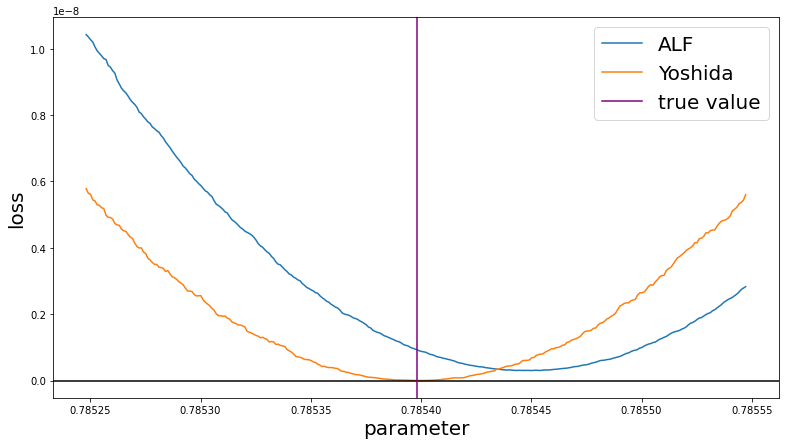} 
%\vspace{-3mm}
\caption{Error landscape of ALF and Y4 methods for Kepler problem showing the loss computed for the parameters in a neighbourhood of the true value of $\alpha$ displayed by a vertical line.} 
\label{fig:Error.landscape}%\textcolor{black}{more explanation in caption}}
\end{figure}
%\chris{[Is the factor 25 in the loss function a typo?]}

\textcolor{black}{
%Indeed, figure \ref{fig:Error.landscape} visualizes the benefit of high order methods for accurate parameter identification. Indeed,
If $(q_{N_j}(\alpha))_i$ was obtained by exact integration of \eqref{eq:Kepler} and in the absence of noise and round-off errors, true parameter values constitute minima for $L$. We interpret the application of a numerical integrator as a perturbation of size $\mathcal{O}(h^p)$ to the exact $(q_{N_j}(\alpha))_i$, where $h$ is the step size of the integration and $p$ the order of the numerical method. This yields a perturbation $\tilde L$ of $L$ of size $\mathcal{O}(h^{2p})$ in case of the mean-square loss. Thus, assuming that the local minima of $L$ at the true parameter value is non-degenerate, $\tilde L$ has a local minimum within a ball around the true parameter of size $\mathcal{O}(h^p)$. This follows from classical discussions on the numerical conditioning of computing zeros of a function as, for instance, in \cite[§5.2]{Dahmen2022}.
This provides a direct relation of the order of an integration method and the accuracy of identified parameters.}

\textcolor{black}{Notice that in the adaptive-step size context the perturbation of $L$ and, thus, the error of its minima are controlled by the provided error tolerance. However, the discussion shows that in order to be able to expect the same accuracy in the parameter identification, neural ODEs based on lower-order methods require more integration steps than neural ODEs based on high-order methods.}

\textcolor{black}{The above $\mathcal{O}(h^p)$ error relation in the parameter estimation constitutes an asymptotic upper bound. In Geometric Numerical Integration errors of numerical integrators can enter in highly symmetric way \cite{Hairer2013}. In symplectic integration of Hamiltonian systems, for instance, energy errors enter in an unbiased form. If the sought parameter is related to the geometric structure that is preserved by the geometric numerical integrator, parameters can potentially be estimated to higher accuracy than expected by the order of the numerical integrator. This, together with backward error analysis techniques, was used in \cite{Offen2022}, for instance, to accurately identify a Hamiltonian function of a dynamical system even though a low order method was used to discretize the dynamical system.
These techniques, however, are tailored to the geometric problem at hand, while the approach of this article considers a more general case.%, in which we do not expect any symmetry effects generically.
}

\subsection{Nonlinear harmonic oscillator}
There are two settings considered for the learning of the dynamics \eqref{eq:coupled_osc}. In the first setting, the learning problem is the parameter identification as presented in Section~\ref{sec:nonlin.osc.param}. In the second case we consider the parametrization of the potential by a neural network as described in Section~\ref{sec:nonlin.osc.nn}. Here we give more details on both problems.

\subsubsection{Identification of parameters} \label{sec:annex.nonlin.osc}
In the experiments for the time comparison shown in Table~\ref{table:Osc1} we consider a set of $200$ trajectories in the training data with the initial conditions generated by the Halton sequence in a 20-dimensional box around zero vector $x_0$ with diameter 2.0. The trajectories are obtained by numerical integration using \texttt{sci.integrate.odeint} with relative and absolute tolerances $10^{-13}$ and $10^{-14}$ respectively. In the training we use \texttt{AdamW} optimizer from \texttt{PyTorch} with learning rate scheduler \texttt{ExponentialLR}. The results shown in Table~\ref{table:Osc1} are obtained with different learning rates, namely, the first two with the initial learning rate $10^{-2}$ and $\gamma = 0.995$, the last three with the initial learning rate $10^{-1}$ and $\gamma = 0.998, 0.997, 0.99$ for the tree results respectively. At each epoch we consider all 200 trajectories, so that the loss is $L =  \frac{1}{200} \sum_{i=1}^{200} \|(z_N)_i - x(T, (x_0)_i)\|^2$ with $T = 0.5$. While Table~\ref{table:Osc1} compares the training time of ALF and Y4, it is also important to compare their performance in the learned parameters. In Figure~\ref{fig:Osc.error.vs.time} we show the results in the error of the learned parameters as a function of computational time measured at each epoch of ALF, Y4 and also RK45, which is not reversible. We can see that Y4 in not only faster than ALF in the training but the same also holds for the error in the learned parameters. While RK45 is the fastest to get to accurate parameters, it also requires the storing of 80 additional states during integration at each epoch, which means a considerable contribution to the memory costs. To better understand the reasons of the faster learning of Y4 than ALF, we show in Figure~\ref{fig:Osc.time} the computation time accumulated at each epoch of the training. The computational time per epoch is smaller for Y4, which contributes to the faster convergence in the training. In both Figures~\ref{fig:Osc.error.vs.time} and\ref{fig:Osc.time}, the four plots correspond to different random initializations of the parameters in the optimization.

\begin{figure}[h] 
\centering
\includegraphics[width=0.35\textwidth]{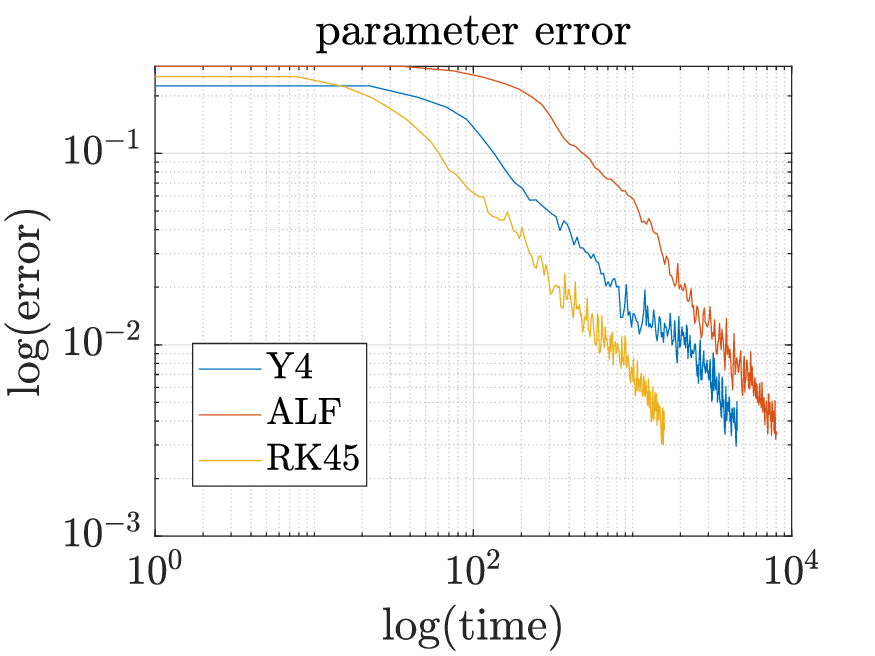} \ \ \ 
\includegraphics[width=0.35\textwidth]{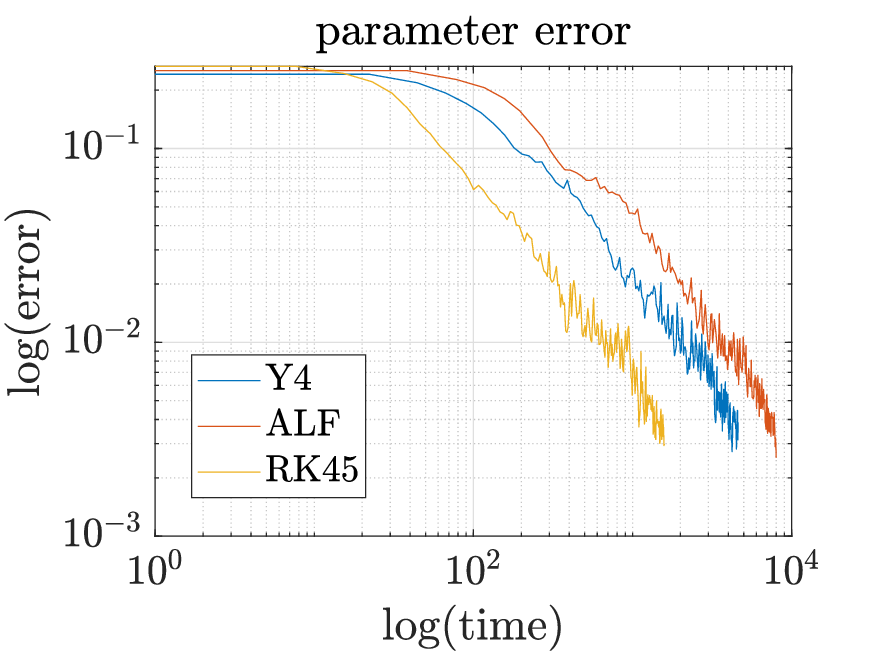} \\
\includegraphics[width=0.35\textwidth]{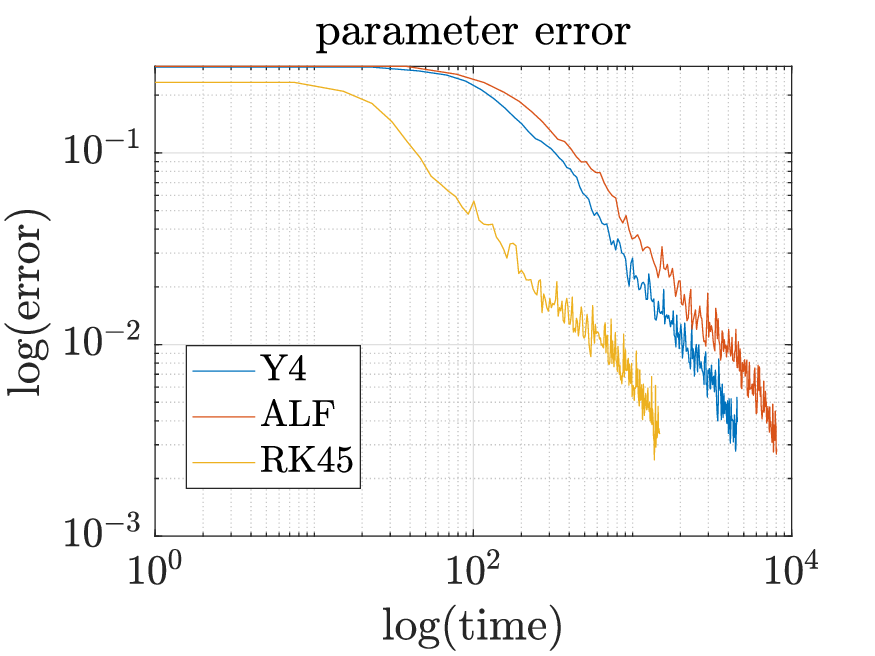} \ \ \ 
\includegraphics[width=0.35\textwidth]{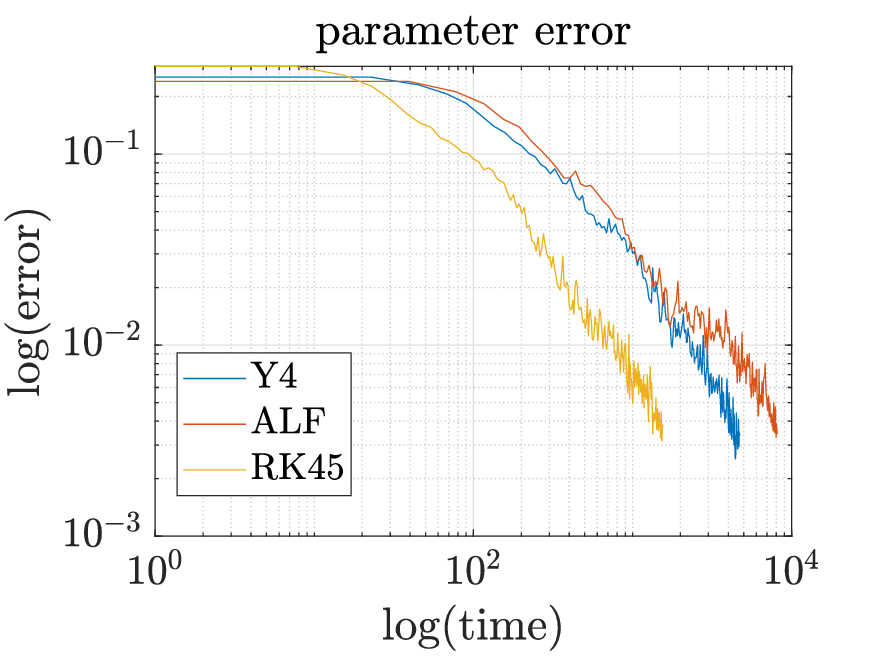}
\caption{Error in learned parameters as a function of time.} 
\label{fig:Osc.error.vs.time}
%\fbox{\rule[-.5cm]{0cm}{4cm} \rule[-.5cm]{4cm}{0cm}}
\end{figure}

\begin{figure}[h] 
\centering
\includegraphics[width=0.35\textwidth]{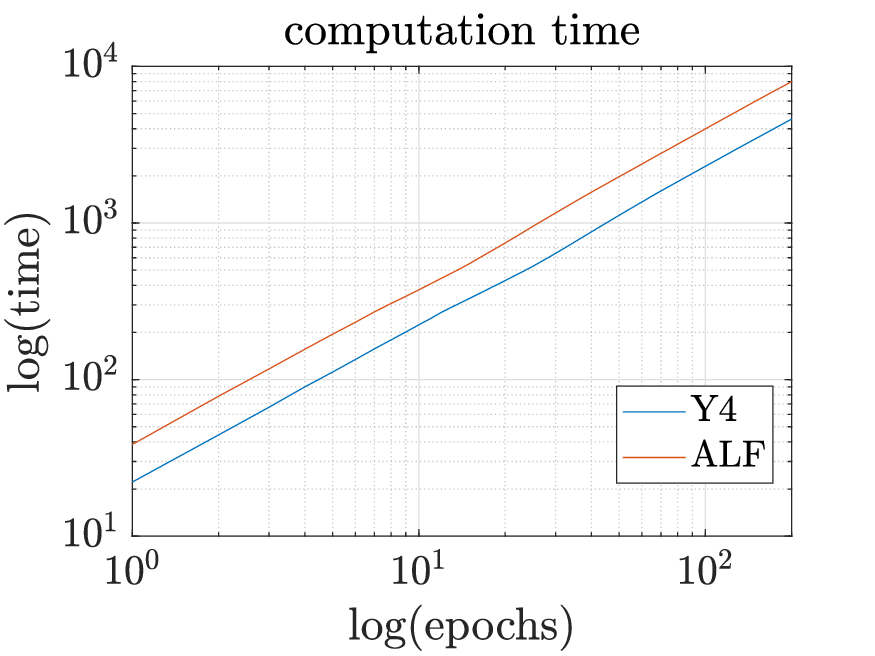} \ \ \ 
\includegraphics[width=0.35\textwidth]{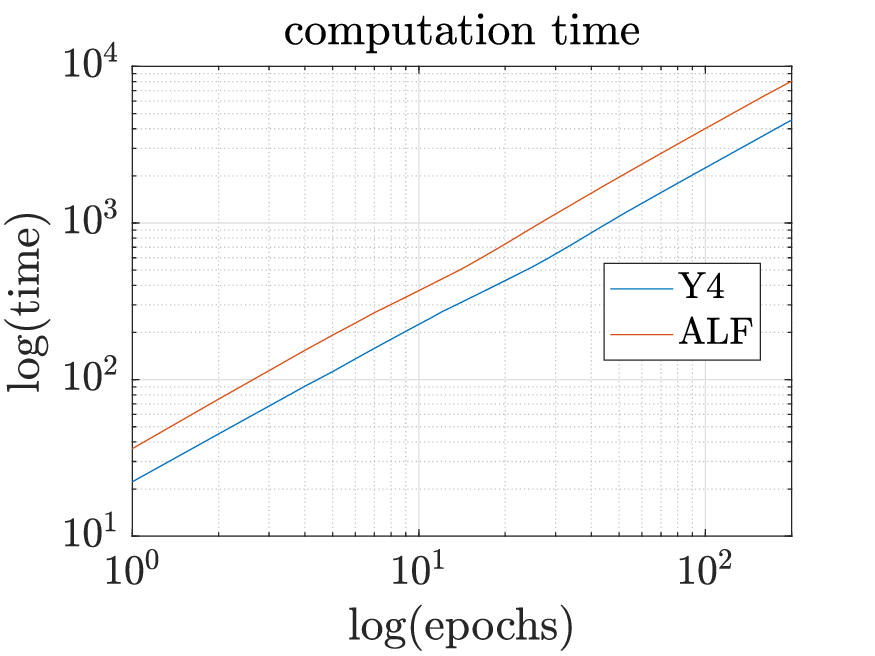} \\
\includegraphics[width=0.35\textwidth]{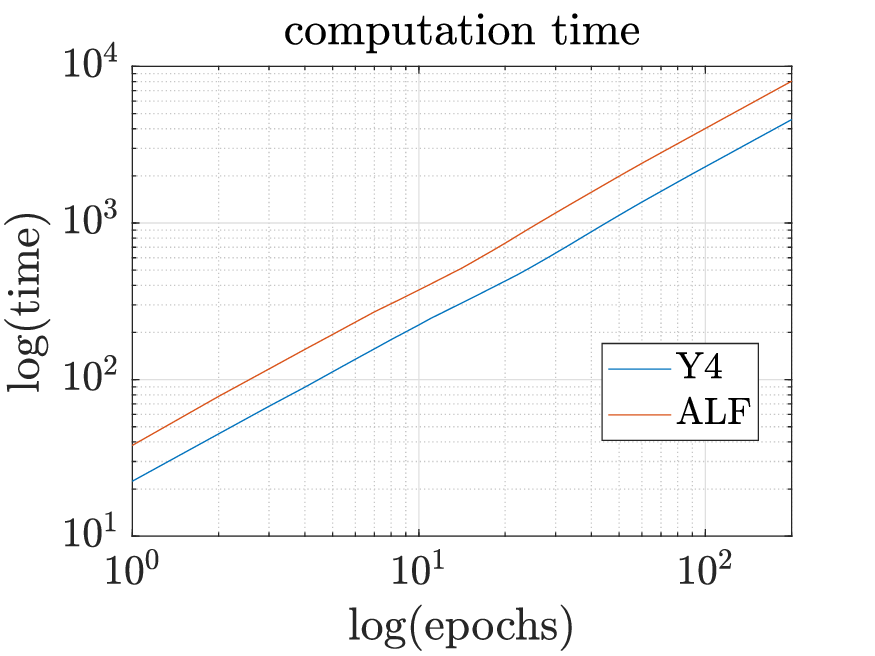} \ \ \ 
\includegraphics[width=0.35\textwidth]{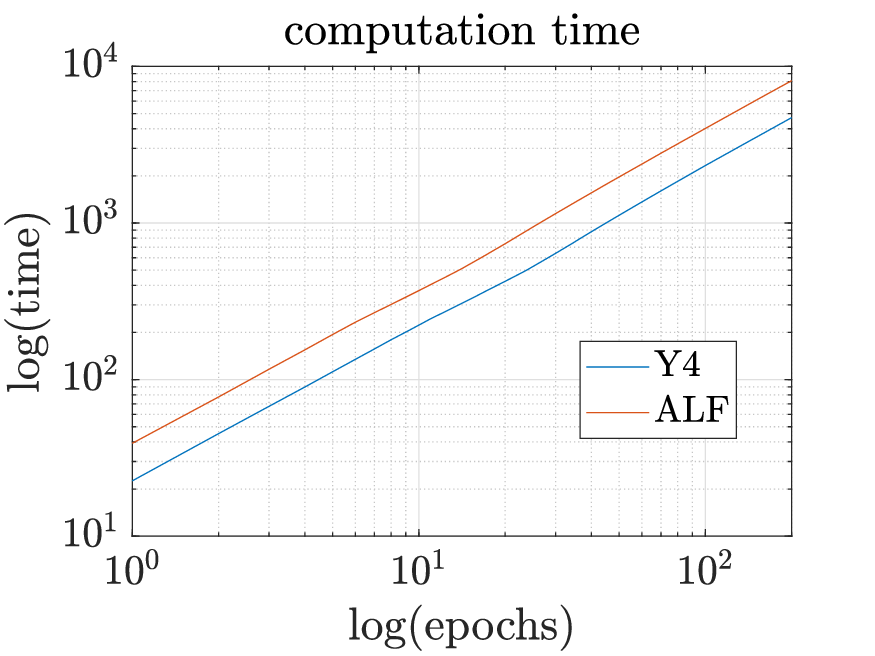}
\caption{Time of computation in function of epochs. When the curve is positioned lower, the corresponding algorithm is faster.}  
\label{fig:Osc.time}
%\fbox{\rule[-.5cm]{0cm}{4cm} \rule[-.5cm]{4cm}{0cm}}
\end{figure}

\textcolor{black}{In addition to results obtained for adaptive stepping, we test ALF and Y4 with the step size fixed to $h = 0.1$ and the training until either the training accuracy reaches $10^{-4}$ or the number of epochs reaches 500. Figure~\ref{fig:ALF-Y4-training-loss} shows that ALF is stuck at the training accuracy $10^{-2}$ and the training stops because of reaching 500 epochs, while Y4 converges to accuracy $10^{-4}$ with $181$ epochs. The same behaviour is observed for different parameter initialization. Decreasing the step size to $h=0.01$ permits ALF to reach accuracy $10^{-4}$. The results obtained in Figure~\ref{fig:ALF-Y4-training-loss} show that with a fixed step size the lower order method is unable to achieve an accuracy better than $10^{-2}$ in training loss, whereas Y4 reaches accuracy $10^{-4}$. This illustrates what happens in the case of the adaptive time-stepping. A lower order method needs to reduce the step size to get to better accuracy. This implies more steps in the integration, and therefore, slower computations.}
\begin{figure} 
%\vspace{-2mm}
\centering
\includegraphics[width=0.35\textwidth]{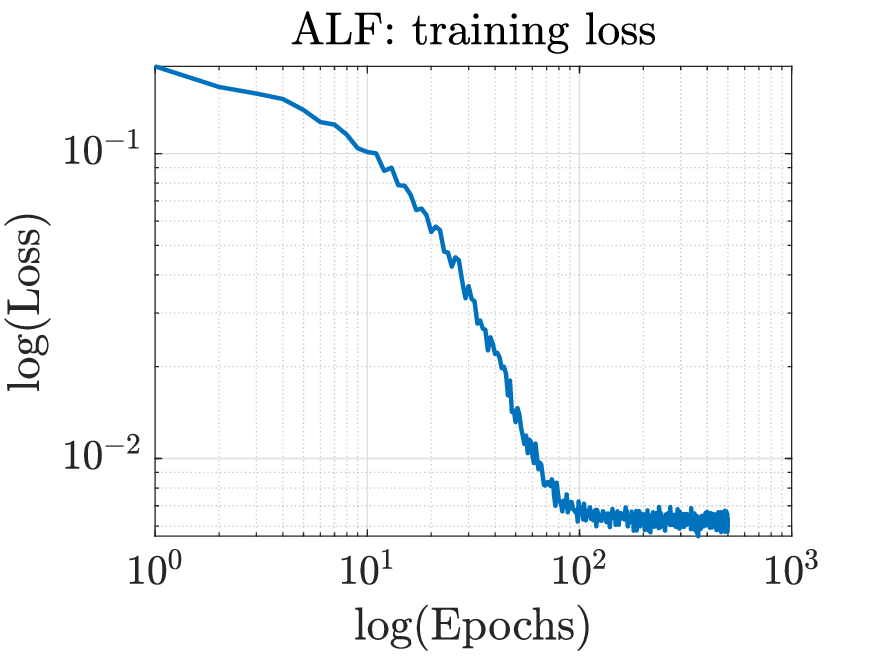} \ \ \ 
\includegraphics[width=0.35\textwidth]{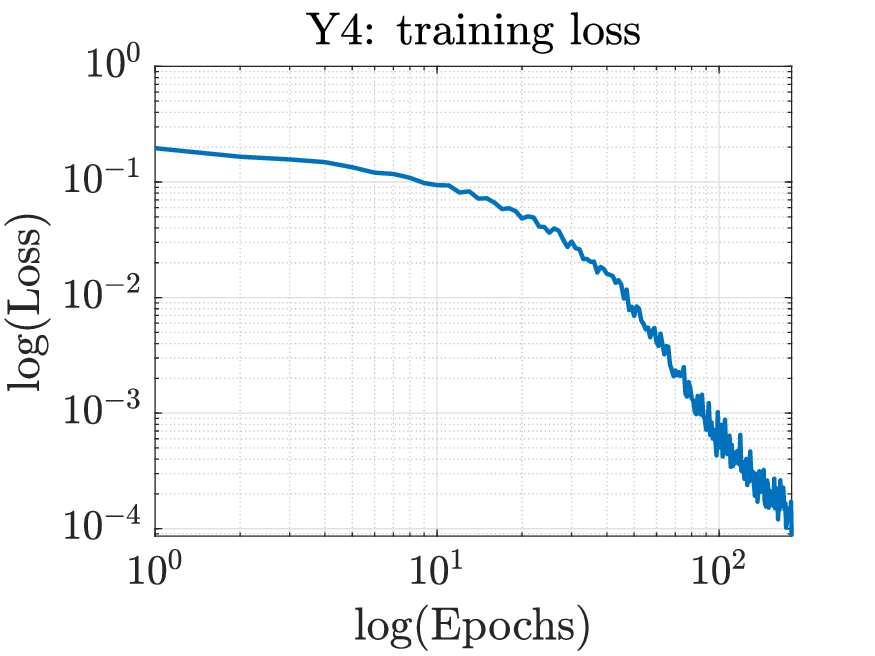}
%\fbox{\rule[-.5cm]{0cm}{4cm} \rule[-.5cm]{4cm}{0cm}}
\caption{Training loss is displayed in logarithmic scale for the parameter identification in case of coupled oscillation for ALF and Y4 with fixed step size $h=0.1$.}
\label{fig:ALF-Y4-training-loss}
%\vspace{-2mm}
\end{figure}
\subsubsection{Neural network parametrization}
The goal is to find the unknown potential governing \eqref{eq:coupled_osc.potential}. For this we assume a particular form of the potential, namely,
$$ V(q) = \sum_{i=1}^s\sum_{j=1}^n c_{i,j}\sigma_i(q_j) + \sum_{i=1}^d\sum_{j=1}^n\sum_{k=j+1}^n C_{i,j,k} \Sigma_i(\| q_j - q_k \|),$$
where $\sigma_i$ stand for different single particle potentials and $\Sigma_i$ for double particle potentials. In the case considered \textcolor{black}{above}, we have 
\begin{align*}
&c_{1,1} = \frac{a_1}{2}, \ c_{1,2} = \frac{a_2}{2}, \ \sigma_1(q) = q^2, \\
&c_{2,1} = \frac{b_1}{4}, \ c_{2,2} = \frac{b_2}{4}, \ \sigma_2(q) = q^4, \\
&C_{1,1,2} = \frac{e}{2}, \ \Sigma_1(x) = x^2.
\end{align*}
In the learning problem, we assume that functions $\sigma_1, \sigma_2$ and $\Sigma_1$ are unknown as well as parameters $a_1, a_2, b_1, b_2, e$. We parameterize the derivatives  $\frac{a_1}{2}\sigma_1', \frac{a_2}{2}\sigma_1',  \frac{b_1}{4}\sigma_2',\frac{b_2}{4}\sigma_2'$ and $\frac{e}{2} \Sigma_1'$ by neural networks each and use them to model the dynamics in \eqref{eq:coupled_osc.potential}. We use 5 neural network\chris{s}, which we denote by $\xi_1, \xi_2, \xi_3, \xi_4, \xi_5$. All of them have the same architecture $q \mapsto W_1\tanh{\left(W_2\tanh{(W_3 q)}\right)}$, where $W_1$ is a matrix of parameters of size $1 \times 100$, matrix $W_2$ is of size $100 \times 100$ and $W_3$ is of size $100 \times 1$. The resulting dynamics is defined by 
\begin{equation*}
\begin{aligned}
&\dot{q}_1 = v_1, \quad \dot{v}_1 = -\xi_1(q_1) -\xi_3(q_1) - \xi_5(q_1 - q_2), \\ % -  \frac{a_1}{2} \sigma_1'(q_1) - \frac{b_1}{4} \sigma_2'(q_1) - \Sigma_1'(q_1 - q_2), \\
&\dot{q}_2 = v_2, \quad \dot{v}_2 = -\xi_2(q_2) -\xi_4(q_2) - \xi_5(q_2 - q_1).% - \frac{a_2}{2} \sigma_1'(q_2) - \frac{b_2}{4} \sigma_2'(q_2) - \Sigma_1'(q_2 - q_1).  
\end{aligned}
\end{equation*} 
The equations parameterized by neural networks are then integrated using ALF or Yoshida composition of ALF2 at each epoch in the training. The training data is set to be a set of $1000$ trajectories with the initial conditions generated by the Halton sequence in a 4-dimensional box around $x_0 = (0.8,  -0.4, 0.0, 0.0)$ with diameter $2.0$. The optimizer is \texttt{AdamW} with initial learning rate $10^{-3}$ and scheduler \texttt{ExponentialLR} with $\gamma = 0.995$. In addition, we consider batches of 300 trajectories at each epoch with the resulting loss function of the same form as in the case of the parameter identification problem.  
%%%%%%%%%%%%%%%%%%%%%%%%%%%%%%%%%%%%%%%%%%%%%%%%%%%%%%%%%%%
\textcolor{black}{
\subsection{Discretized wave equations}\label{sec:app_discWave}
For generation of the training data, we consider the wave equation with potential $V(u) = \frac{1}{2}u^2$.
\textcolor{black}{The true motions can be expressed in the time-dependent Fourier series as
\[
u(t,x)=\sum_{m=-\infty}^\infty \hat u_m(t) e^{2 \pi i mx/L}, \quad L=1
\]
where the Fourier coefficients evolve as
\[
\hat u_m(t) = \gamma_m^{-1} \hat{v}_{m,0} \sin(\gamma_m t)+  \hat{u}_{m,0} \cos(\gamma_m t), \quad \gamma_m = \sqrt{1+\frac{4 \pi^2}{L^2}m^2}.
\]
Here $\hat{u}_{m,0}$, $\hat{v}_{m,0}$ are the Fourier coefficients of an initial wave $u(0,x)$ and velocity $u_t(0,x)$, respectively. Notice that a Fourier coefficient $\hat{u}_{m}(t)$ remains exactly zero over time if and only if  $\hat{u}_{m,0} =0=\hat{v}_{m,0}$.
Training data to initial data with only finitely many nonzero Fourier coefficients can, therefore, be obtained to machine precision by a spectral method. Alternatively, solutions can be computed by an application of the 5-point stencil as described in Example 7 (16) in \cite{Offen2024} on a fine mesh with discretization parameters $\Delta t = 1/160$, $\Delta x = 1/80$ and then subsampled to a mesh with $\Delta t = 1/40$, $\Delta x = 1/20$. In our case both methods yield the same training data up to a maximum error of order $1\mathrm{e}-4$.
In the training data creation, we sample initial $\hat{u}_{m,0}$, $\hat{v}_{m,0}$ from a standard normal distribution. It is then weighted by $e^{-4m^8}$ such that effectively only the first two Fourier modes are active. See figure \ref{fig:PDE.trainingData} for a plot of two of the solutions to the wave equation that were used to create the training data set.
}}
\begin{figure} 
\centering
\includegraphics[width=0.35\textwidth]{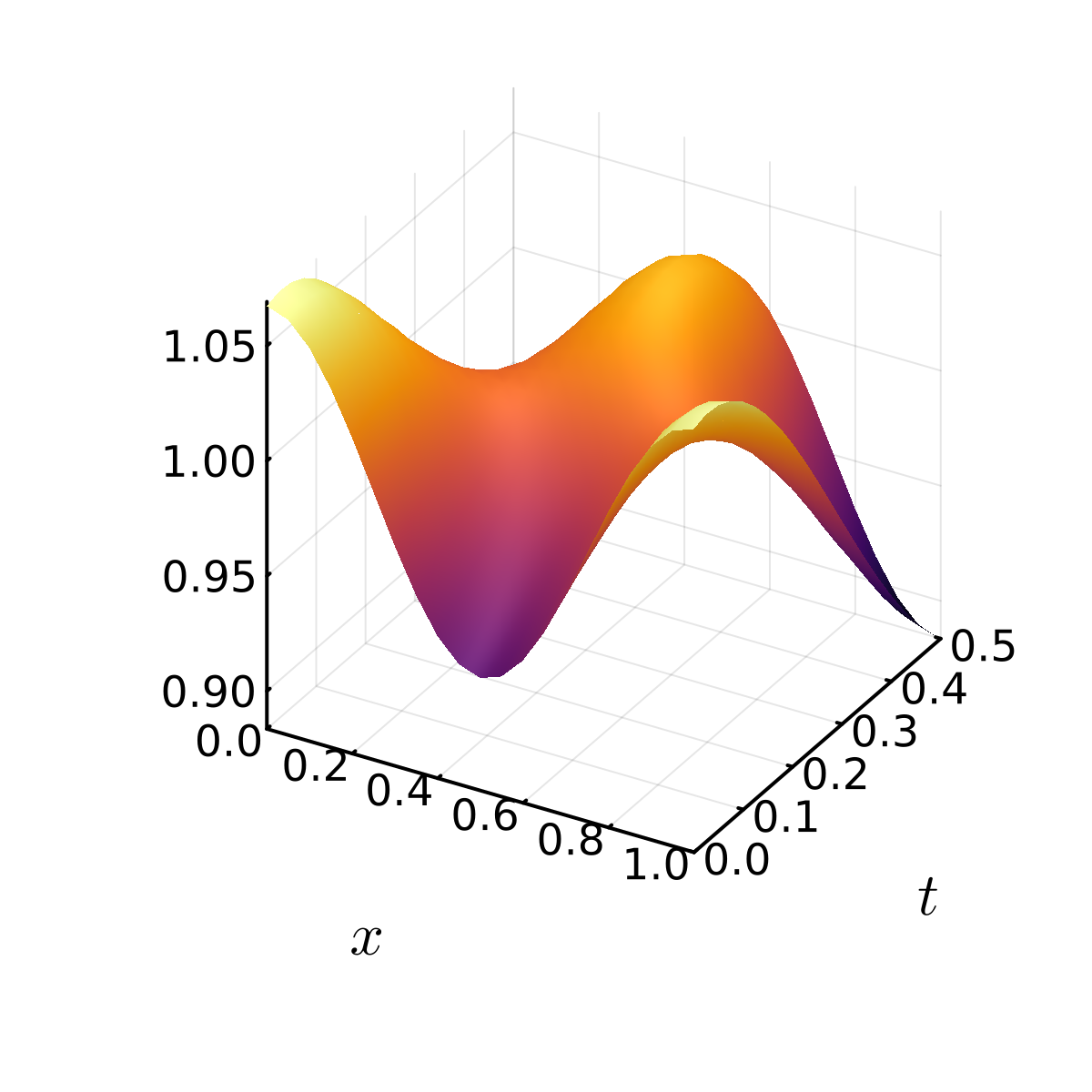} \ \ \ 
\includegraphics[width=0.35\textwidth]{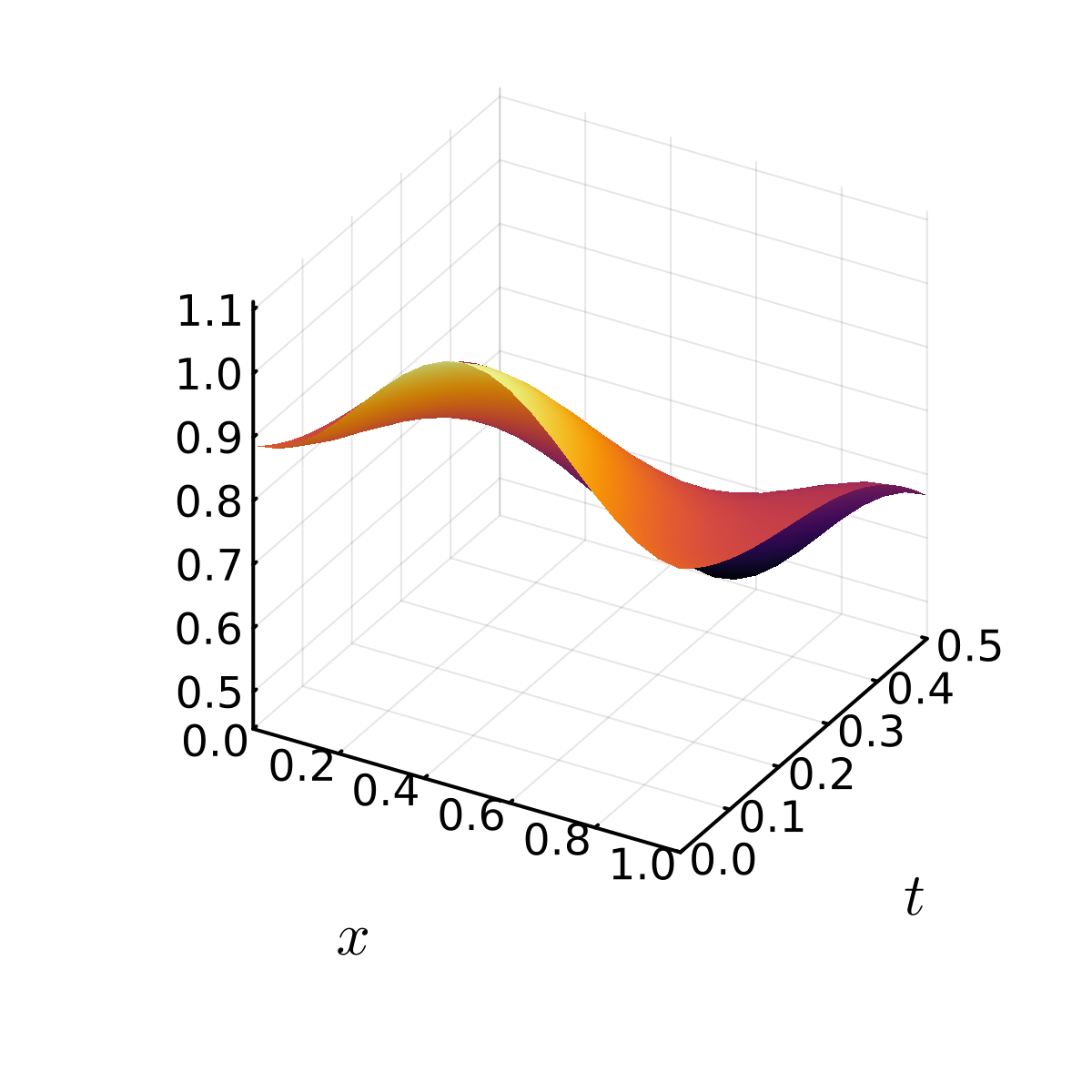} \\
\caption{Two samples of the training data used in section \ref{sec:app_discWave}.} 
\label{fig:PDE.trainingData}
%\fbox{\rule[-.5cm]{0cm}{4cm} \rule[-.5cm]{4cm}{0cm}}
\end{figure}
\textcolor{black}{
%by first discretizing $u(t,x)$ in spacial variable $x$ with $\Delta x = \frac{1}{80}$ leading to vector $u_d$, which approximates $u$ at the mesh points and then solving the corresponding system with step size $h = \frac{1}{160}$. The discretized PDE equation is an Euler-Lagrange equation for $L(u_d, (u_d)_t) = \frac{1}{2}\left( (u_d)t^\top (u_d)_t + u_d^\top A u_d - u_d^\top u_d \right)$ Then the solutions are sub-sampled to a mesh with $dx=\frac{1}{20}$ and the velocities are approximated using momenta $p_d = \frac{\partial L}{\partial \dot u}(u_d, \dot u_d) = u_d$ and the expression of the considered discrete Lagrangian.
In the training, we consider initial and final points of 50 trajectories on time interval $[0, 0.3]$ and 30 unseen trajectories in the testing. We use the optimiser LBFGS with the default values of the parameters. %The neural network approximating $f(u_d)$ in  \eqref{eq:disc:PDE} we use a composition of a linear block with dimensions $20 \times 100$, then a ReLU block of dimension $100 \times 100$ and then a linear block with dimensions $100 \times 20$.
In the numerical tests, we compare the behaviour of ALF, Y4 and Runge-Kutta 4(5). It can be seen in Figure~\ref{fig:PDE.training} that Y4 reaches the lowest values in the training loss faster than ALF. While RK45 is fastest, it also consumes more memory, which can make a crucial difference in high dimensional systems. We also report a lower time of computations per epoch for Y4 with respect to the results by ALF in Figure~\ref{fig:PDE.time}.}
\begin{figure} 
\centering
\includegraphics[width=0.35\textwidth]{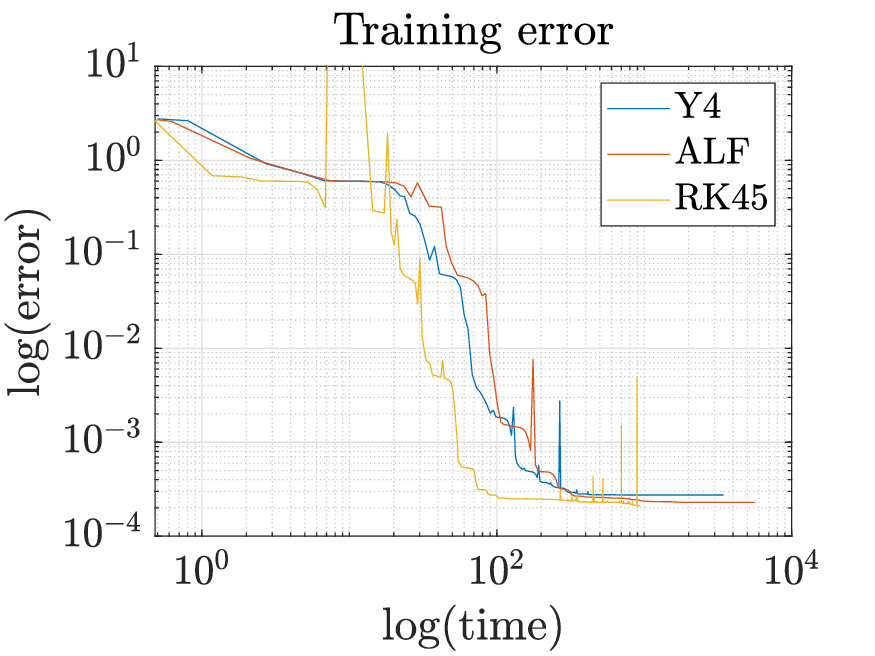} \ \ \ 
\includegraphics[width=0.35\textwidth]{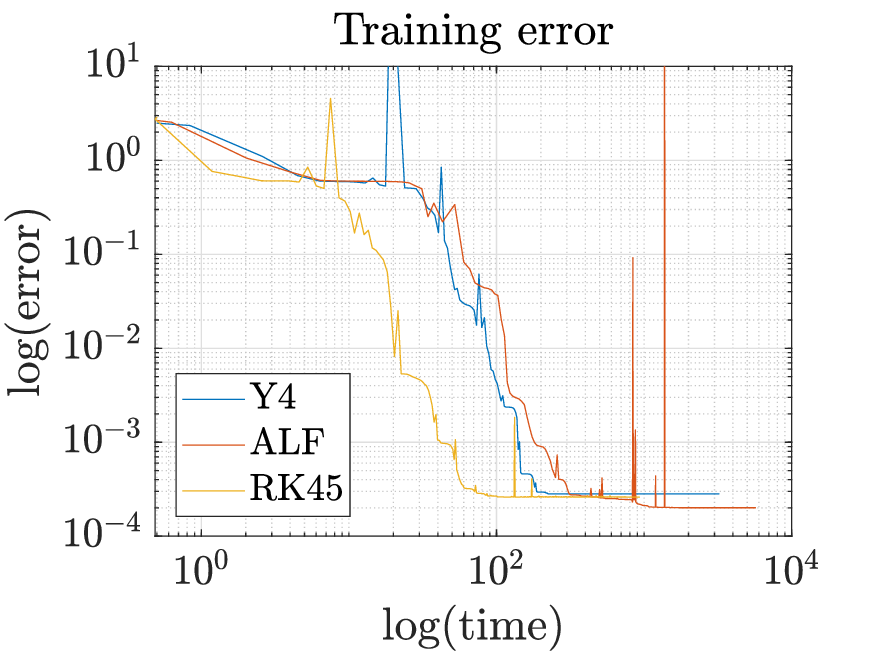} \\
\includegraphics[width=0.35\textwidth]{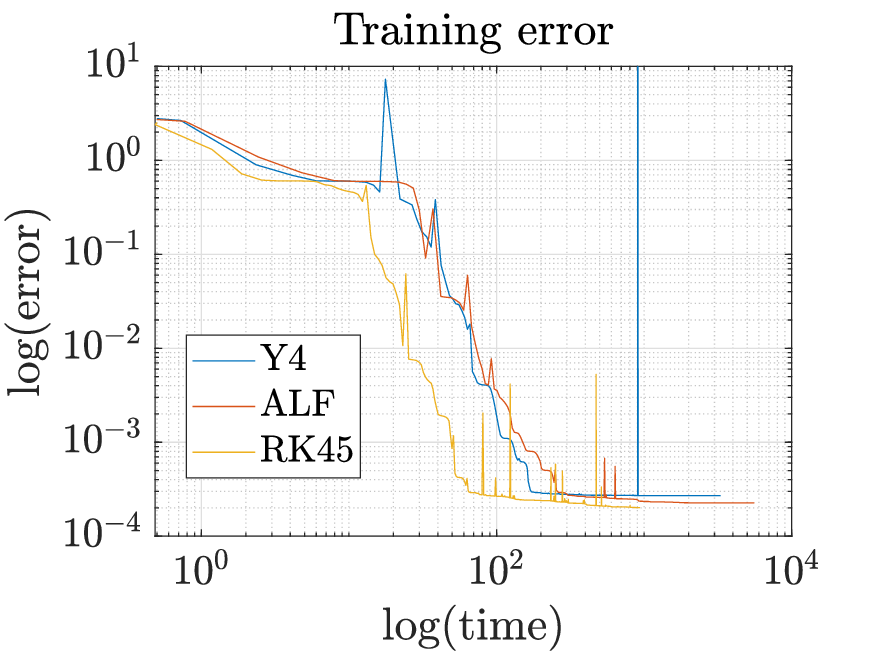} \ \ \ 
\includegraphics[width=0.35\textwidth]{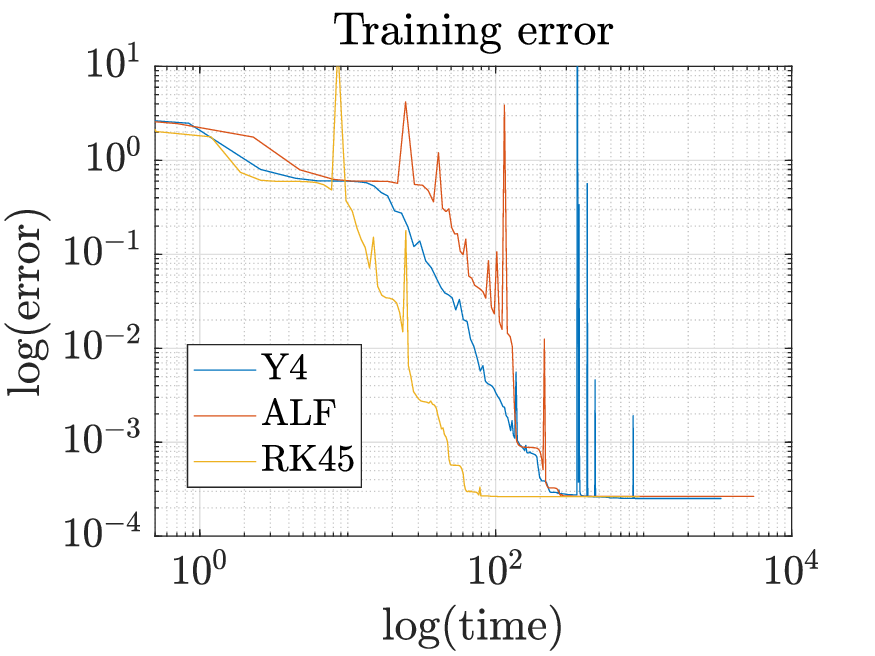}
\caption{Time of computation in function of epochs. Lower curve means faster computations.} 
\label{fig:PDE.training}
%\fbox{\rule[-.5cm]{0cm}{4cm} \rule[-.5cm]{4cm}{0cm}}
\end{figure}

%ffff
\begin{figure} 
\centering
\includegraphics[width=0.35\textwidth]{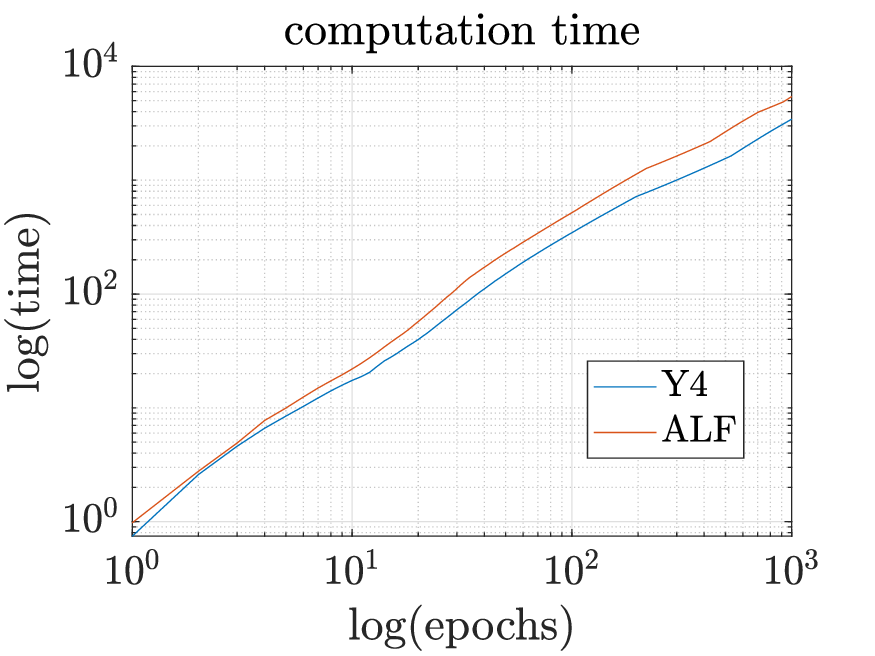} \ \ \ 
\includegraphics[width=0.35\textwidth]{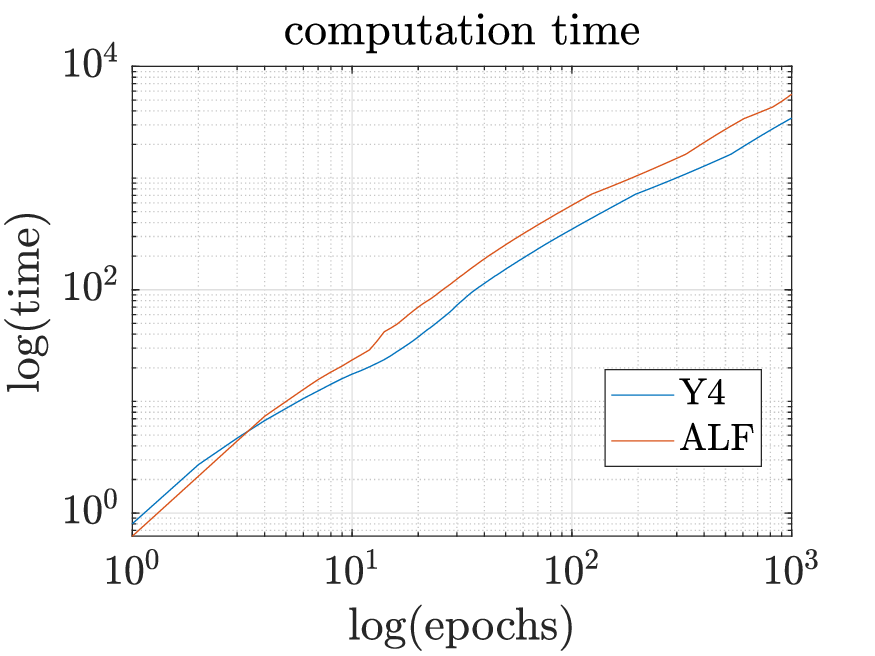} \\
\includegraphics[width=0.35\textwidth]{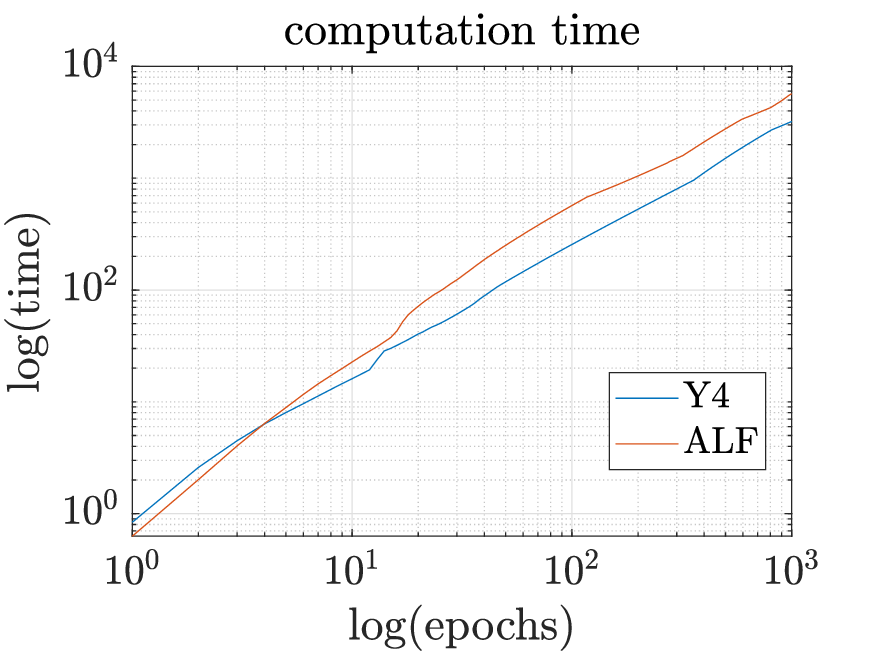} \ \ \ 
\includegraphics[width=0.35\textwidth]{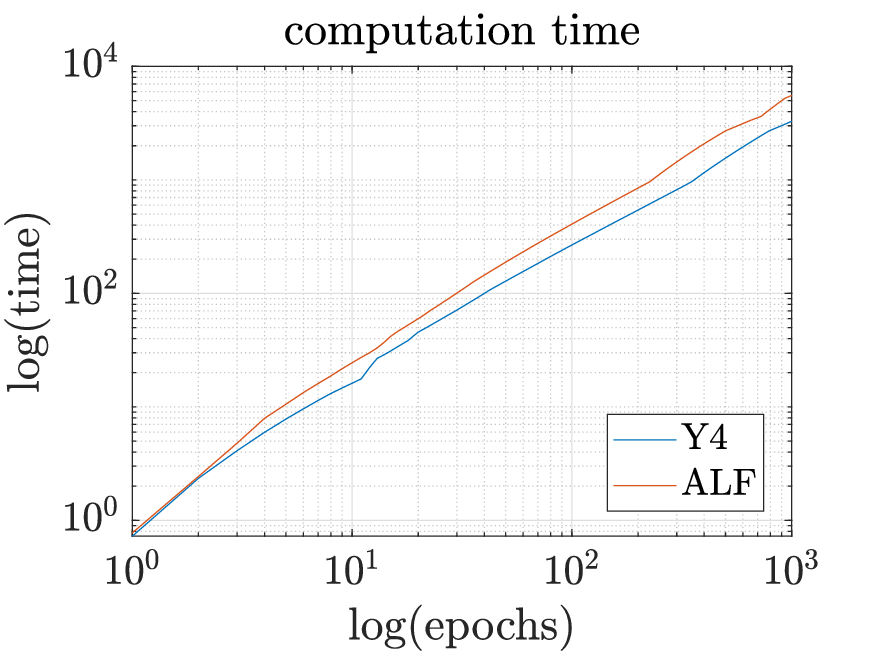}
\caption{Time of computation in function of epochs. When the curve is positioned lower, the corresponding algorithm is faster.} 
\label{fig:PDE.time}
%\fbox{\rule[-.5cm]{0cm}{4cm} \rule[-.5cm]{4cm}{0cm}}
\end{figure}

% \begin{table}
% %\vspace{-2mm}
%   \caption{Memory consumption by adaptive methods in discretized~PDE~example}
%   \label{table:PDENN}
%   \centering
% \begin{tabular}{ |p{6cm}||p{2.5cm}|p{2.5cm}|p{2.5cm}| }
% 	\hline
% 	\multicolumn{4}{|c|}{Full memory usage} \\
% 	\hline
% 	Random initialization of parameters in NN & adaptive ALF  &  adaptive Y4 & RK45\\
% 	\hline
% 	Initialization 1
%  &  228920K
%   &  226716K
%  &  246428K
%  \\
% 	\hline
% %	0.6   &  4.76 sec   & {1.89 sec}  \\
% %	\hline
% 	Initialization 2  &   231000K
%    &  228664K
%  & 370356K
%  \\
% 	\hline
% 	Initialization 3   &  229384K
%     &  226812K
%   &  233684K
% \\
% 		\hline
% 	Initialization 4   &  230948K
%   & 228720K
%  & 233640K
% \\
% %	\hline
% %	0.9&    5.65 sec   & { 1.25 sec } \\
% 	\hline
% 	Initialization 5  & 229084K
%  & 228728K
%  &233700K
% \\
% 	\hline
% \end{tabular}
% %\vspace{-2mm}
% \end{table}

\end{document}